\newcommand{\cC}{\mathcal C}
\newcommand{\cF}{\mathcal F}
\newcommand{\cH}{\mathcal H}
\newcommand{\cU}{\mathcal U}
\newcommand{\oU}{\overline\cU}
\newcommand{\bC}{\mathbb C}
\newcommand{\bP}{\mathbb P}
\newcommand{\bZ}{\mathbb Z}
\newcommand{\cov}{\widetilde}
\DeclareMathOperator{\Bl}{\operatorname{Bl}}
\DeclareMathOperator{\CH}{\operatorname{CH}}
\DeclareMathOperator{\chara}{\operatorname{char}}
\DeclareMathOperator{\Gal}{\operatorname{Gal}}
\DeclareMathOperator{\Gr}{\operatorname{Gr}}
\DeclareMathOperator{\Hilb}{Hilb}
\DeclareMathOperator{\NS}{\operatorname{NS}}
\DeclareMathOperator{\Num}{\operatorname{Num}}
\DeclareMathOperator{\Pic}{\operatorname{Pic}}
\DeclareMathOperator{\rank}{rank}
\DeclareMathOperator{\Sym}{Sym}
\newtheorem{prop}{Proposition}
\numberwithin{prop}{section}
\newtheorem{theo}[prop]{Theorem}
\newtheorem{coro}[prop]{Corollary}
\newtheorem{lemm}[prop]{Lemma}
\theoremstyle{remark}
\newtheorem{rema}[prop]{Remark}
\newtheorem{notation}[prop]{Notation}
\theoremstyle{definition}
\newtheorem{defi}[prop]{Definition}
\author[Brooke]{Corey Brooke}
\address{Department of Mathematics \\
University of Oregon \\
Eugene, OR 97403-1222 \\
USA}
\email{cbrooke@uoregon.edu}
\title[]{Lines on cubic threefolds and fourfolds containing a plane}
\begin{document}

\maketitle

\begin{abstract}
We describe the Fano scheme of lines on a general cubic threefold containing a plane over a field $k$ of characteristic different from 2. Then, we use the Fano scheme to characterize rationality for such cubic threefolds over nonclosed fields and to construct a Lagrangian fibration from the Fano variety of lines on a cubic fourfold containing a plane explicitly.
\end{abstract}

\section{Introduction}\label{sect:intro}

The Fano scheme of lines $F\subset\Gr(2,n+2)$ on a cubic $n$-fold $X\subset\bP^{n+1}$ has proved an important tool for understanding the geometry of $X$, famously illustrated by Clemens and Griffith's proof that a smooth, complex cubic threefold is irrational \cite{clemens}. On the other hand, the Fano scheme $F$ is often geometrically interesting in its own right: for example, Fano varieties of smooth cubic fourfolds provide concrete examples of hyperk\"ahler fourfolds \cite{beauville}. This paper explains the geometry of the Fano scheme $F(Y)$ of lines on a general cubic threefold $Y$ containing a plane $P$ and presents two applications, one to the birational geometry of $Y$ over a nonclosed field and another to the construction of Lagrangian fibrations from hyperk\"ahler fourfolds. 

Unlike the cubic threefolds considered by Clemens and Griffiths, a cubic threefold containing a plane is always singular, as is its Fano scheme. We give the following description of the Fano scheme:

\begin{theo}\label{theo:fanothree}
Let $Y$ be a cubic threefold containing exactly one plane $P$ over a field of characteristic not $2$, and suppose the singularities of $Y$ are isolated. Then the Fano scheme $F(Y)$ of lines on $Y$ consists of
\begin{enumerate}
    \item the plane $P^*$ dual to $P$,
    \item a ruled surface $\cF$ over a curve $C$ of arithmetic genus $2$, and
    \item a singular surface $\oU$, geometrically birational to $\Hilb^2C$.
\end{enumerate}
If $Y\setminus P$ has fewer than three nodes, then $C$ is irreducible, as is each component of $F$ described above. If $Y\setminus P$ is smooth, then so is $C$, and $\oU$ is birational to a $\Pic^0_C$-torsor $T$ satisfying the relation $2[T]=[\Pic^1_C]$ in the Weil-Ch\^atelet group.
\end{theo}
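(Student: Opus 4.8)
The plan is to blow up the plane and read off $F(Y)$ from the resulting quadric surface bundle. Choosing coordinates so that $P=\{x_3=x_4=0\}$, one has $Y=\{x_3G+x_4H=0\}$ for quadrics $G,H$, and linear projection from $P$ resolves on $\widetilde Y:=\Bl_PY$ to a morphism $q\colon\widetilde Y\to\bP^1$ whose fiber over $[s:t]$ is the residual quadric surface $Q_{[s:t]}=\{sG+tH=0\}$ inside the $\bP^3$ it spans with $P$. The first step is to record the standard facts about this bundle: it is cut out by a $4\times4$ symmetric matrix of forms whose determinant is a degree-$6$ section on $\bP^1$; since $\chara k\neq2$, away from the exceptional divisor $\widetilde Y$ is singular exactly at the vertices of the cone fibers over the non-reduced part of this determinant, and this matches the isolated singularities of $Y$ lying off $P$, so that $Y\setminus P$ is smooth if and only if the discriminant is reduced.

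Next I would stratify lines $\ell\subset Y$ by $\ell\cap P$. Lines in $P$ form $P^*$, the dual plane. A line $\ell\not\subset P$ with $\ell\cap P$ a single point is contracted by the projection, hence lies in some $P\cup Q_t$ and, being residual to $P$, lies in $Q_t$; conversely $Q_t\cap P$ is a $(1,1)$-curve on $Q_t$, so every line in every ruling of every fiber meets $P$ in one point. Thus these lines constitute the relative scheme of lines $F_1(\widetilde Y/\bP^1)$, whose Stein factorization over $\bP^1$ factors through a curve $C$; here $C\to\bP^1$ is the discriminant double cover, so $p_a(C)=2$, the curve $C$ is smooth precisely when the discriminant is reduced, and $C$ is reducible only when the discriminant is a perfect square, which for a threefold with only isolated singularities forces three double factors, i.e.\ at least three nodes on $Y\setminus P$; and $F_1(\widetilde Y/\bP^1)\to C$ is a $\bP^1$-fibration, namely the ruled surface $\cF$. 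Finally, a line $\ell$ disjoint from $P$ is mapped isomorphically to $\bP^1$ by $q$, hence is a section of $q$, and $\oU$ is the closure of the locus of sections of $q$ that push forward to lines in $\bP^4$; one checks these three families exhaust $F(Y)$. Since $P^*$ is a plane and $\cF$ is ruled over $C$, all the irreducibility statements reduce to irreducibility of $C$.

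The substantive point is the structure of $\oU$. Base change along $C\to\bP^1$ separates the two rulings of the generic quadric, so $\widetilde Y_C:=\widetilde Y\times_{\bP^1}C\to C$ carries two $\bP^1$-fibrations whose relative line schemes are $\cF$ and its pullback under the hyperelliptic involution $\iota$. A line $\ell$ disjoint from $P$ meets each $Q_t$ in one point, lying on exactly one line of each ruling; assembling these over $C$ produces a section $\sigma_\ell\colon C\to\cF$, and $\ell$ is recovered as $\overline{\bigcup_c\sigma_\ell(c)\cap\sigma_\ell(\iota c)}$, so $\ell\mapsto\sigma_\ell$ is injective. Over $\bar k$ one writes $\cF=\bP(\mathcal E)$ for a rank-$2$ bundle $\mathcal E$ on $C$; sections correspond to line subbundles $L\hookrightarrow\mathcal E$, and the requirement that the associated section of $q$ be a line in $\bP^4$ fixes $\deg L$ and forces $h^0(\mathcal E\otimes L^\vee)=1$ generically. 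Hence $\ell\mapsto L$ identifies $\oU$ birationally with a component $T$ of a relative $\Pic$ (equivalently, a relative $\mathrm{Quot}$) scheme of $\cF$, a $\Pic^0_C$-torsor, which over $\bar k$ becomes $\Pic^d_{C_{\bar k}}$ and hence is birational to $\Hilb^2C_{\bar k}$ via Abel--Jacobi; the singularities of $\oU$ should emerge from the degeneration of $\sigma_\ell$ over the ramification points of $C\to\bP^1$, where the two rulings collide.

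The main obstacle is computing $[T]$ in the Weil--Ch\^atelet group. Over $k$ the $\bP^1$-bundle $\cF\to C$ carries a Brauer class (that of the even Clifford algebra of $q$), so $T$ is not $\Pic^d_C$ but a twist of it, and the task is to pin this twist down on the nose. The mechanism I would exploit is that $\sigma_\ell(c)$ and $\sigma_\ell(\iota c)$ always meet: this exhibits $\ell$ as ``half'' of a more symmetric datum on $C$, and following the twist through the two rulings should realize $2[T]$ as the class coming from the scheme of rulings of the generic fiber of $q$, which by the standard dictionary between quadric surface bundles and their discriminant covers is $[\Pic^1_C]$. The delicate part is the bookkeeping: getting $2[T]=[\Pic^1_C]$ exactly rather than merely that $[T]$ is some square root of $[\Pic^1_C]$, and confirming along the way (using the hyperelliptic $g^1_2$, which supplies a rational point of $\Pic^2_C$ and hence $2[\Pic^1_C]=0$) that the relation is consistent.
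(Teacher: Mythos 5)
Your setup (blowing up $P$, the quadric surface fibration, the Stein factorization producing the genus-$2$ double cover $C$ and the conic bundle $\cF\to C$, and the trichotomy of lines by $\ell\cap P$) matches the paper, and your reducibility analysis of $C$ via square discriminants is fine. Where you diverge is in the treatment of $\oU$: you propose to identify it with a twisted relative Picard/Quot scheme of the conic bundle $\cF\to C$ via $\ell\mapsto\sigma_\ell\mapsto(L\hookrightarrow\mathcal E)$. Even granting this framework, two assertions are left unjustified: that $h^0(\mathcal E\otimes L^\vee)=1$ generically (so that $\ell\mapsto L$ is generically injective), and, more seriously, that a \emph{generic} section of $p\colon\cF\to C$ in the numerical class of a $\sigma_\ell$ actually corresponds to a line in $\bP^4$ rather than a higher-degree curve meeting each residual quadric once; without this, you only get an embedding of $\cU$ into an open subset of the twisted $\Pic^d$, not birationality. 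The paper sidesteps both issues by exploiting the singular locus $Z=P\cap\Sing(Y)$: fixing $z\in Z$, the plane spanned by a line $L\subset Y\setminus P$ and $z$ meets $Y$ in $L$ plus two lines through $z$, giving an explicit birational map $\cU\to\Sym^2C$ with an explicit inverse. Your approach never uses $Z$, and some substitute for this residual-intersection argument is needed to close these gaps.

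The decisive gap is the relation $2[T]=[\Pic^1_C]$, which is the novel arithmetic content of the theorem. Your proposed mechanism --- ``following the twist through the two rulings should realize $2[T]$ as the class coming from the scheme of rulings, which \ldots is $[\Pic^1_C]$'' --- is a heuristic, not an argument: there is no ``standard dictionary'' that assigns the class $[\Pi^1_C]=[\Pic^1_C]$ to the ruling double cover, and you yourself flag that distinguishing $2[T]=[\Pic^1_C]$ from ``$[T]$ is some square root of $[\Pic^1_C]$'' is unresolved. Since $[\Pic^1_C]$ is $2$-torsion (as you note via the hyperelliptic $g^1_2$), the content is precisely that $[T]$ is a specific $4$-torsion class doubling to $[\Pic^1_C]$, and nothing in your sketch pins this down. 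The paper's route is quite different and is worth internalizing: using residual intersections with the rulings one defines involutions $j_c$ of $T$ for $c\in C$, checks $j_c\circ j_{\bar d}=j_{\bar d}\circ j_c$ via explicit incidence relations among the $27$ lines on a cubic surface section, and thereby builds a commutative group scheme $G=\Pic^0_C\sqcup T\sqcup\Pic^1_C\sqcup T'$ with $G/\Pic^0_C\cong\bZ/4\bZ$ and $T$ sitting over $1$, $\Pic^1_C$ over $2$. The connecting homomorphism $\delta\colon H^0(k,\bZ/4\bZ)\to H^1(k,\Pic^0_C)$ is additive, so $[\Pic^1_C]=\delta(2)=2\delta(1)=2[T]$. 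Some construction of comparable precision is required; as written, your proof establishes the component decomposition but not the torsor identity.
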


A similar account of the birational geometry of $F(Y)$ over $\bC$ appears in \cite[Sec. 3.3]{hadan}. The description of the surface $T$ and its arithmetic, however, are novel.

Over a separably closed field, a cubic threefold $Y$ containing a plane $P$ is rational. There are two classical constructions: projection from a node on $Y$ and projection onto $L\times P$ where $L\subset Y\setminus P$ is a line. Over an arbitrary field $k$, however, neither construction may be available; in fact, it turns out that $Y$ has a $k$-rational node along $P$ or line in $Y\setminus P$ exactly when the surface $T$ has a $k$-point. Using intermediate Jacobian torsor obstructions developed by Hassett and Tschinkel in \cite{hassetttschinkelrationality} and by Benoist and Wittenberg in \cite{benoistwitt}, we find that the torsor $T$ completely controls the rationality of $Y$:

\begin{theo}\label{theo:rationality}
Let $Y$ be a cubic threefold containing a plane $P$ over a field $k$ of characteristic not $2$, and suppose that $Y$ has isolated singularities, all of which are confined to $P$. Then $Y$ is rational over $k$ if and only if $Y$ contains a $k$-rational node or a $k$-rational line in $Y\setminus P$.
\end{theo}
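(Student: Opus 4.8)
The plan is to deduce the theorem from a loop of three implications: (i) if $Y$ has a $k$-rational node or a $k$-rational line in $Y\setminus P$, then $Y$ is rational; (ii) if $Y$ is rational, then the torsor $T$ of Theorem~\ref{theo:fanothree} has a $k$-point; and (iii) if $T(k)\neq\emptyset$, then $Y$ has a $k$-rational node or line in $Y\setminus P$. For (i) I would invoke the two classical constructions. If $p\in Y(k)$ is a node, projection from $p$ is a $k$-rational map $Y\dashrightarrow\bP^3$, and since $p$ is an ordinary double point a general line through $p$ meets $Y$ at $2p$ and in one further point, so the map is birational; hence $Y$ is $k$-rational. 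If instead $L\subset Y\setminus P$ is a line over $k$, then projection from $P$ exhibits $Y$ as $k$-birational to a quadric surface bundle over the $\bP^1$ of hyperplanes through $P$, in which $L$ --- being disjoint from $P$ --- becomes a section; a section gives the generic fibre, a two-dimensional quadric over $k(\bP^1)$, a rational point, hence makes it $k(\bP^1)$-rational, so $Y$ is $k$-rational.

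For (ii), assume $Y$ is rational over $k$. Then $Y(k)\neq\emptyset$, and I pass to a smooth projective $k$-model $\tilde Y$ of $Y$, which is again $k$-rational and is $\bar k$-rational; its intermediate Jacobian is $\Pic^0_C$, where $C$ is the (now smooth, since $Y\setminus P$ is smooth) genus $2$ curve of Theorem~\ref{theo:fanothree} --- concretely the double cover of $\bP^1$ recording the two rulings in the fibres of the quadric surface bundle. I would then apply the necessity half of the intermediate Jacobian torsor obstruction, in the form valid over arbitrary fields due to Hassett--Tschinkel \cite{hassetttschinkelrationality} and Benoist--Wittenberg \cite{benoistwitt}: rationality of $\tilde Y$ forces the distinguished $\Pic^0_C$-torsor associated to $\CH^2$ of $\tilde Y_{\bar k}$ to be trivial. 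Identifying this torsor, via the Abel--Jacobi map, with the Albanese torsor of the component $\oU\subset F(Y)$ --- which by Theorem~\ref{theo:fanothree} is $T$ --- yields $T(k)\neq\emptyset$.

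For (iii), once $T(k)\neq\emptyset$ the torsor $T$ is an abelian surface, and since $\oU$ is $k$-birational to $T$ we get $\oU(k)\neq\emptyset$ (the rational points of $T\cong\Pic^0_C$ are dense for $k$ infinite and meet the locus where the birational map is defined; finite $k$ is handled directly, $T(k)\neq\emptyset$ being automatic). A $k$-point of $\oU$ is a line $L\subset Y$ over $k$ not contained in $P$; if $L\cap P=\emptyset$ it is the desired line in $Y\setminus P$, and otherwise the classification in Theorem~\ref{theo:fanothree} shows $L$ passes through a singular point of $Y$, meeting $P$ in that point alone, which is then a $k$-rational node. The main obstacle is (ii): transplanting the intermediate Jacobian torsor obstruction to the \emph{singular} threefold $Y$ through a resolution, and --- the more delicate point --- pinning down the resulting abstract $\Pic^0_C$-torsor as precisely the surface $T$ of Theorem~\ref{theo:fanothree}; this identification is where the Abel--Jacobi map and the fine geometry of $F(Y)$ (in particular $\oU\sim T$ and $2[T]=[\Pic^1_C]$) carry the argument. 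Implication (i) is classical and (iii) is routine bookkeeping given Theorem~\ref{theo:fanothree}.
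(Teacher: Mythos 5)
Your overall architecture matches the paper's: the classical constructions give sufficiency, and the necessity direction runs through the intermediate Jacobian torsor obstruction applied to $\cov Y=\Bl_PY$ together with an identification of the relevant torsor with $T$. But step (ii) as you state it has a genuine gap. Theorem 3.11 of Benoist--Wittenberg does \emph{not} say that rationality forces the torsor $(\CH^2_{\cov Y})^\gamma$ to be trivial; it says only that its class equals $[\Pic^{e}_C]$ for some integer $e$, and these classes are in general nonzero (already $[\Pic^1_C]$ can be a nontrivial $2$-torsion class for a genus-$2$ curve). If the obstruction really forced triviality, the relation $2[T]=[\Pic^1_C]$ you mention only in passing would be superfluous. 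The actual deduction is: (a) identify $(\CH^2_{\cov Y})^\gamma\simeq T$, where $\gamma$ is the algebraic equivalence class of a line in $Y\setminus P$ (the paper does this by embedding $\cU$ into the torsor --- distinct lines in $Y\setminus P$ are rationally inequivalent because $T$ is geometrically an abelian surface --- extending to $T$, and counting dimensions); (b) Benoist--Wittenberg gives $[T]=[\Pic^e_C]$; (c) since $C$ is hyperelliptic, $2[\Pic^e_C]=[\Pic^{2e}_C]=0$; (d) by Corollary~\ref{coro:wcrelations}, $[\Pic^1_C]=2[T]=2[\Pic^e_C]=0$, whence all $[\Pic^i_C]=0$ and finally $[T]=[\Pic^e_C]=0$. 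Without (c) and (d) --- that is, without the group structure on $\Pic^0_C\sqcup T\sqcup\Pic^1_C\sqcup T'$ producing $2[T]=[\Pic^1_C]$ --- the argument does not close.

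A smaller but real error is in (iii): you deduce $\oU(k)\neq\emptyset$ from $T(k)\neq\emptyset$ by asserting that rational points of an abelian surface are Zariski dense over an infinite field. That is false (an abelian surface over $\bQ$ may have finite Mordell--Weil group). No density is needed: Corollary~\ref{coro:tpoints} identifies $T$ with $\left(\oU\setminus\bigcup_{z\in Z}z^*\right)\cup Z$ pointwise, so a $k$-point of $T$ is directly either a $k$-rational singular point of $Y$ on $P$, or a $k$-rational line not contained in $P$ which is disjoint from $P$ or meets $P$ only in a singular point that is then itself $k$-rational.
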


The second application of Theorem~\ref{theo:fanothree} is an explicit construction of a rational Lagrangian fibration from the Fano variety $F$ of lines on a smooth cubic fourfold $X$ containing a plane $P$. When $k=\bC$, tools from hyperk\"ahler geometry detect that $F$ is birational to another hyperk\"ahler fourfold $M$ admitting a Lagrangian fibration $M\to\bP^2$, as explained in Section~\ref{sect:lagrangian}. We construct this fibration explicitly in a way that works over any field of characteristic different from $2$:

\begin{theo}\label{theo:lagrangian}
Let $F$ be the Fano variety of lines on $X$, a general cubic fourfold containing a plane $P$ over a field $k$ of characteristic not $2$, and let $M$ be the Mukai flop of $F$ along $P^*$, another smooth fourfold. Then there is a commutative diagram
\begin{center}
\begin{tikzcd}
\Bl_{P^*}F \arrow[d] \arrow[r] & M \arrow[d,"\rho"] \\
F \arrow[r,dashed] & \bP^2
\end{tikzcd}
\end{center}
where $\rho$ is fibered in torsors of abelian surfaces. In other words, $F$ admits a rational Lagrangian fibration over $k$.
\end{theo}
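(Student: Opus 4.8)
The plan is to realise the fibration as the map ``span with $P$''. A line $\ell\subset X$ disjoint from $P$ spans together with $P$ a unique fourspace $\Pi\supset P$, and since the fourspaces containing $P$ form a $\bP^2$, this gives a dominant rational map $F\dashrightarrow\bP^2$ that is a morphism on the open locus of lines disjoint from $P$ (recall that for general $X$ the Fano fourfold $F$ is smooth and irreducible of dimension $4$, and the lines meeting $P$ form a proper closed subset). For $[m]\in\bP^2$ corresponding to the fourspace $\Pi_m$, the section $Y_m:=X\cap\Pi_m$ is a cubic threefold containing $P$, and the fibre of $F\dashrightarrow\bP^2$ over a general $[m]$ is exactly the set of lines on $Y_m$ disjoint from $P$. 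First I would check that for general $X$ --- which I take to contain only the plane $P$ --- and general $m$, this $Y_m$ meets the hypotheses of Theorem~\ref{theo:fanothree}: writing $X=V(x_3q_3+x_4q_4+x_5q_5)$ with $P=V(x_3,x_4,x_5)$, one sees $Y_m$ is smooth away from $P$ and acquires an ordinary node exactly at the (generically four) points $p\in P$ where the triple of conics $[q_3:q_4:q_5]$ takes the value $[m]$, so $\Sing Y_m$ is isolated and confined to $P$. Theorem~\ref{theo:fanothree} then identifies the fibre with a dense open subset of the component $\oU_m$ (the lines on $Y_m$ meeting but not contained in $P$ forming the ruled surface $\cF_m$), and for general $m$ this $\oU_m$ is geometrically birational to the $\Pic^0_{C_m}$-torsor $T_m$ with $2[T_m]=[\Pic^1_{C_m}]$, where $C_m$ is the smooth genus-$2$ double cover of the line $m=\Pi_m/P$ branched over the sextic discriminant of the quadric surface bundle $\Bl_P X\to\bP^2$ obtained by projecting $X$ from $P$ --- equivalently, the preimage of $m$ in the associated K3 double cover $S\to\bP^2$. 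Thus $F$ is birational to the total space over $\bP^2$ of this family of abelian surface torsors.

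It remains to promote the rational map to the morphism on $M$. I would first record that for general $X$ the surface $P^*\subset F$ is a smooth plane with $N_{P^*/F}\cong\Omega^1_{\bP^2}$ --- over $\bC$ the familiar statement that $P^*$ is a Lagrangian plane, in general a normal bundle computation --- which is exactly what makes the Mukai flop of $F$ along $P^*$ defined and smooth, realised as the common blow-up $\Bl_{P^*}F=\Bl_P M$ whose exceptional divisor $E=\bP(\Omega^1_{\bP^2})$ is the point--line incidence variety of $\bP^2$, contracted to $P^*\subset F$ along one ruling and to the plane $P\subset M$ (canonically the points of $P\subset X$) along the other. Next I would show that $F\dashrightarrow\bP^2$ becomes a morphism on $\Bl_{P^*}F$: the span-with-$P$ map is defined at every line disjoint from $P$, so its indeterminacy lies in $P^*\cup Z$ with $Z$ the divisor of lines meeting but not contained in $P$; blowing up $P^*$ should resolve it there, while along $Z$ it already extends, a line meeting $P$ at $p$ having as limiting fourspace the span of $P$ with the plane tangent to $X$ at $p$ along the nearby disjoint lines. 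Finally one checks that $\Bl_{P^*}F\to\bP^2$ is constant along the fibres of the second ruling $E\to P$, being there the degree-$4$ Gauss map $p\mapsto[T_pX]\in\bP^2$, so that it contracts precisely the flopping curves and factors through $M$, producing $\rho\colon M\to\bP^2$ and the commutative diagram. Since $\rho$ agrees over a dense open with the fibration of the previous paragraph, its general fibre is $\oU_m$; here one checks that the singularities of $\oU_m$ are concentrated along $\oU_m\cap P^*$, so that the very flop that builds $M$ resolves them and exhibits the general fibre of $\rho$ as the honest torsor $T_m$. Over $k=\bC$ such a fibration is automatically Lagrangian for the holomorphic symplectic form, recovering the hyperk\"ahler picture of Section~\ref{sect:lagrangian}.

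The main obstacle is the local analysis along $P^*$ (and along $Z$) required in the second paragraph: that a single blow-up of $P^*$, with no modification along $Z$, resolves the indeterminacy of $F\dashrightarrow\bP^2$ and that the resulting morphism contracts exactly the flopping ruling of $E$, and that the same flop resolves the singularities of the fibres $\oU_m$. Both hinge on describing $F$ near $P^*$: a normal covector at $[\ell_0]\in P^*$ records a point $p\in\ell_0$ together with a direction, which is precisely the data of a fourspace through $P$, and making this identification rigorous is where $N_{P^*/F}\cong\Omega^1_{\bP^2}$ does the real work; the continuity of the limiting-fourspace construction along $Z$ is a separate local check. A secondary, bookkeeping point is gluing the torsors $T_m$ into a single family over $\bP^2$ and matching the relation $2[T_m]=[\Pic^1_{C_m}]$ of Theorem~\ref{theo:fanothree} with a global twist by the two-torsion Brauer class of the quadric bundle; this sharpens the description of $\rho$ but is not needed for the statement.
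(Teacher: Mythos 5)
Your proposal follows the same architecture as the paper's proof: the fibration is the span-with-$P$ (equivalently, projection-from-$P$) map to the $\bP^2$ of hyperplanes containing $P$; its indeterminacy is resolved by blowing up $P^*$; the resulting morphism is constant on the flopping ruling of the exceptional divisor and so factors through the Mukai flop $M$; and the smooth fibres are identified with the torsors $T$ of Theorem~\ref{theo:fanothree}. The one genuine divergence is in the step you yourself flag as the main obstacle. You propose to settle it by a local analysis near $P^*$ using $N_{P^*/F}\cong\Omega^1_{\bP^2}$; the paper never needs that computation, because the fibrewise structure results of Section~\ref{sect:threefold} already do all the work. The key input is that \emph{every} hyperplane section $X\cap H$ with $H\supset P$ --- not just the generic one, as in your first paragraph --- is general in the sense of Section~\ref{sect:threefold} (the Gauss map $P\to(P^\perp)^*$, given by your conics $[q_3:q_4:q_5]$, is finite of degree $4$ since $X$ is smooth along $P$). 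Granting that, Lemmas~\ref{lemm:ubarf} and~\ref{lemm:ubarp} say exactly which fibre closures $\oU_H$ contain a given point of $F$: a line meeting $P$ at one point $x$ lies only in $\oU_{T_xX}$, so $\pi$ extends across your divisor $Z$ by normality of $F$, while a line $L\subset P$ lies in $\oU_H$ precisely for the pencil $H=T_xX$, $x\in L$. This identifies the graph of $\pi$ over $P^*$ with the incidence variety $\{([L],x):x\in L\subset P\}$, i.e.\ with the exceptional divisor of $\Bl_{P^*}F$, and simultaneously shows that the resolved map restricted to that divisor factors through the projection to $P$ followed by the Gauss map --- which is the statement that it contracts the flopping curves. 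So your plan is sound, but the deferred normal-bundle argument can be replaced by the global, fibrewise one you already set up for identifying the fibres, provided you upgrade ``generically four nodes'' to ``always a length-four zero-dimensional singular locus.''
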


For a hyperplane $H\supset P$, the cubic threefold $X\cap H$ contains a plane, and for a general hyperplane $H$, $X\cap H$ is smooth away from $P$, so by Theorem~\ref{theo:fanothree}, one component of its Fano scheme is birational to a torsor $T$ of an abelian surface. These torsors $T$ for various hyperplanes $H\supset P$ are precisely the smooth fibers of $\rho$.

The outline of this paper is as follows: Section~\ref{sect:threefold} gives a detailed analysis of the Fano scheme of lines on a general cubic threefold containing a plane, and Sections~\ref{sect:rationality} and~\ref{sect:lagrangian} contain proofs of Theorems~\ref{theo:rationality} and ~\ref{theo:lagrangian}, respectively.

\bigskip

\noindent {\bf Acknowledgments:} I am very grateful for the help of my advisor, Nicolas Addington, who suggested this project for my Ph.D. thesis and guided much of my early progress. Helpful conversations with Sarah Frei also motivated much of the arithmetic content of the paper. While working on this project, I was partially supported by NSF grants DMS-1902213 and DMS-2039316.

\section{Lines on a cubic threefold containing a plane}\label{sect:threefold}

Let $Y\subset\bP^4$ be a cubic threefold containing a plane $P$. Writing the defining equations for the plane as $x_0=x_1=0$ and for $Y$ as $x_0Q_0+x_1Q_1=0$, the singular locus of $Y$ along $P$ is the intersection of conics
\[
Z:=\{x_0=x_1=Q_0=Q_1=0\}=P\cap \mathrm{Sing}(Y).
\]
Even if $Y$ is smooth away from $P$, $\dim(Z)$ may be positive, but we restrict to the case where $Y$ is {\emph{general}}, by which we mean that $Y$ contains only one plane, and $\dim Z=0$. In Section~\ref{sect:lagrangian}, we show that if $X$ is a smooth cubic fourfold containing exactly one plane $P$, then any section of $X$ by a hyperplane containing $P$ is general, motivating our definition here.

\subsection{The quadric surface fibration}

Let $\cov Y=\Bl_PY$. Projection from $P$ induces a quadric surface fibration $q\colon\cov Y\to\bP^1$ with a sextic discriminant locus $\Delta\subset\bP^1$. Since $Y$ contains only one plane, the degenerate fibers of $q$ are no worse than cones. Geometrically, each fiber of $q$ has either one or two rulings depending on whether the fiber is singular or smooth.

\begin{lemm}\label{lemm:singthreefold}
Let $Y$ be a general cubic threefold containing a plane $P$, let $\cov Y=\Bl_PY$, and let $Z=\mathrm{Sing(Y)\cap P}$. Then 
\begin{enumerate}
    \item[(i)] $Z$ is a complete intersection of two conics in $P$,
    \item[(ii)] there is a bijection between quadric surfaces in $Y$ and conics containing $Z$, given by $Q\mapsto Q\cap P$,
    \item[(iii)] each $k$-point of $Z$ gives a section of the quadric surface fibration $q\colon\cov Y\to\bP^1$,
    \item[(iv)] the singularities of $Y$ are isolated, and
    \item[(v)] if the discriminant of $q$ is reduced, then $\cov Y$ and $Y\setminus P$ are smooth.
\end{enumerate}
\end{lemm}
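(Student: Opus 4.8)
The plan is to establish the five assertions in turn, working throughout with the explicit equations $P=\{x_0=x_1=0\}$ and $Y=\{x_0Q_0+x_1Q_1=0\}$ and with the two standard charts of $\Bl_P\bP^4$. For (i) and (ii), I would first record that restricting the partial derivatives of $x_0Q_0+x_1Q_1$ to $P$ yields exactly $Q_0|_P$ and $Q_1|_P$ (the remaining partials vanish on $P$), so $Z$ is cut out in $P\cong\bP^2$ by the two conics $Q_0|_P,Q_1|_P$. The hypothesis $\dim Z=0$ forces both conics to be nonzero and coprime: otherwise $Z$ would contain a conic or a line and have positive dimension. Hence $Q_0|_P,Q_1|_P$ form a regular sequence and $Z$ is a complete intersection of length $4$, giving (i). For (ii), an irreducible quadric surface $Q\subset Y$ (it must be irreducible, else $Y$ would contain a second plane) spans a hyperplane $H$, and $Y\cap H$ is a cubic surface containing $Q$, hence equal to $Q\cup P'$ for a plane $P'$; since $Y$ contains only $P$, we get $P'=P$ and $P\subset H$. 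Thus the quadric surfaces in $Y$ are precisely the residual quadrics $Q_{H_t}$ in the pencil $Y\cap H_t=P\cup Q_{H_t}$, where $H_t=\{x_1=tx_0\}$ ranges over hyperplanes through $P$. A direct restriction gives $Q_{H_t}\cap P=\{Q_0|_P+tQ_1|_P=0\}$, and because $Z$ is the complete intersection of $Q_0|_P,Q_1|_P$, the conics through $Z$ are exactly the pencil $\{Q_0|_P+tQ_1|_P\}$; this identifies $Q\mapsto Q\cap P$ with a bijection $\bP^1\to\{\text{conics through }Z\}$.

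For (iii) I would pass to the chart of $\cov Y=\Bl_PY$ in which $x_1=x_0t$, where the exceptional divisor $E$ is $\{x_0=0\}$, the map $q$ is the coordinate $t$, and $\cov Y$ is cut out by $R_t:=(Q_0+tQ_1)(x_0,x_0t,x_2,x_3,x_4)$ after dividing the equation of $Y$ by $x_0$. Given a $k$-point $z\in Z$, I would compute the fiber of $\cov Y\to Y$ over $z$: since $z\in P$ is a base point of the conic pencil, $Q_0|_P(z)=Q_1|_P(z)=0$, so $R_t$ vanishes for every $t$ at the point with $x_0=0$ and $(x_2,x_3,x_4)$ equal to the coordinates of $z$. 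Hence this fiber is a line $\ell_z\cong\bP^1$ on which $q=t$ restricts to an isomorphism onto $\bP^1$; it is the desired section, defined over $k$ precisely because $z$ is.

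Finally, (iv) and (v) are read off from the same local equation $\cov Y=\{R_t=0\}$ inside the smooth fourfold $\Bl_P\bP^4$. A point of $\cov Y$ is singular iff the spatial gradient $\nabla_xR_t$ and $\partial_tR_t$ both vanish; the vanishing of $\nabla_xR_t$ along the fiber says the fiber quadric is singular there, i.e.\ the point is the vertex of a cone. Because $Y$ contains only one plane, no fiber degenerates past a cone (rank $\ge 3$), so there is at most one vertex over each of the finitely many points of the sextic discriminant $\Delta$; hence $\Sing\cov Y$ is finite. Since $\cov Y\setminus E\cong Y\setminus P$ and $\Sing Y\cap P=Z$ is finite, $\Sing Y$ is finite, giving (iv). For (v), at a vertex over $t_0$ I would diagonalize the family of quadrics (using $\chara k\neq2$); writing $M(t)$ for the symmetric matrix of $R_t$, so that $\Delta=\det M$ up to scalar, the derivative-of-determinant formula $\Delta'(t_0)=\operatorname{tr}(\operatorname{adj}M(t_0)\,M'(t_0))$ shows, since $\operatorname{adj}M(t_0)$ has rank one supported on the kernel direction of the corank-one form $M(t_0)$, that $\Delta'(t_0)\neq0$ exactly when $\partial_tR_t\neq0$ at the vertex, i.e.\ exactly when $\cov Y$ is smooth there. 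Thus a reduced $\Delta$ forces $\cov Y$ smooth, and hence $Y\setminus P\cong\cov Y\setminus E$ smooth.

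I expect the main obstacle to be the local analysis in (v): correctly matching the order of vanishing of $\Delta$ at $t_0$ with the nonvanishing of $\partial_tR_t$ at the cone vertex, where the adjugate computation and the corank-one hypothesis (supplied by the one-plane condition, which rules out fibers of rank $\le 2$) must be combined with care; the secondary difficulty is the bookkeeping needed to run the chart computations of (iii) and (iv) symmetrically across both blow-up charts of $\Bl_P\bP^4$ and the affine charts of the fibers.
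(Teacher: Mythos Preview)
Your proposal is correct and follows essentially the same line as the paper: both fix the coordinates $P=\{x_0=x_1=0\}$, $Y=\{x_0Q_0+x_1Q_1=0\}$, read off $Z=\{Q_0|_P=Q_1|_P=0\}$, identify the pencil of residual quadrics with the pencil of conics through $Z$, and deduce (iv)--(v) from the fact that $\cov Y$ can be singular only at the cone vertex of a degenerate fiber. The differences are of emphasis rather than strategy: for (iii) the paper simply notes that each $z\in Z$ lies on every quadric fiber (hence gives a section), while you verify this via the chart $x_1=tx_0$; and for (v) the paper records only that ``a local computation shows'' the equivalence, whereas you actually supply that computation via the adjugate identity $\Delta'(t_0)=\operatorname{tr}(\operatorname{adj}M(t_0)\,M'(t_0))$ together with the corank-one condition, which is exactly the argument the paper is alluding to.
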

\begin{proof}
Fix coordinates so that $P=\{x_0=x_1=0\}$, and write the defining equation for $Y$ as $x_0Q_0+x_1Q_1=0$. Since $Y$ is general, $Z$ is zero-dimensional and cut out by the equations
\[
x_0=x_1=Q_0=Q_1=0,
\]
from which we deduce (i).

For (ii), note that each quadric surface in $Y$ is cut out by the equations
\[
sx_1-tx_0=sQ_0+tQ_1=0
\]
for $(s:t)\in\bP^1$, and each point in $Z$ satisfies $x_0=x_1=Q_0=Q_1=0$ so belongs to each quadric surface in the pencil. Hence each quadric surface in $Y$ contains $Z$, and the map $Q\mapsto Q\cap P$ identifies the pencil of quadrics in $X$ with the pencil of conics through $Z$. Moreover, each point in $Z$ gives a section of $q$ since it is contained in each quadric surface in $Y$, proving (iii).

Now we consider the singularities of $\cov Y$. For $y\in \cov Y$, we have 
\[
T_y(\cov Y)=T_yQ+T_{q(y)}\bP^1
\]
where $Q\ni y$ is the quadric surface containing $y$. Hence if $\dim T_y(\cov Y)>3$, then $\dim T_yQ>2$, which happens for at most six points, i.e. the cone points of each of the finitely many singular fibers of $q$. Since the blowup $\cov Y\to Y$ is an isomorphism away from $P$, there are only finitely many singular points on $Y$, proving (iv).

A local computation shows that $\cov Y$ is smooth if and only if the discriminant of $q$ is reduced. When $\cov Y$ is smooth, $Y$ is smooth away from $P$, proving (v).
\end{proof}

Let $\cF=F(\cov Y/\bP^1)$ be the relative Fano variety of lines, parametrizing lines in fibers of $q$. The Stein factorization of the projection $F(\cov Y/\bP^1)\to\bP^1$ consists of a conic bundle $p:\cF\to C$ and a double cover $g:C\to\bP^1$ branched over $\Delta$. The curve $C$ has genus $2$ and is smooth if and only if $\Delta$ is reduced. Note that a line $L\subset Y$ is contained in at most one fiber of $q$, and each line $L\subset Y$ meeting $P$ in exactly one point is contained in a fiber of $q$, so we can embed $\cF$ in the Fano scheme of lines on $Y$ as the closure of the set of lines meeting but not contained in $P$.

Importantly, sections of $p$ are in bijection with sections of $q$: specifying a point in each smooth fiber of $q$ is the same as specifying a line in each ruling of that fiber, explained further in \cite[Sect. 3]{hassetttschinkel}. By Lemma~\ref{lemm:singthreefold}, each (geometric) point $z\in Z$ gives a section of $q$, so let $\tau_z$ denote the corresponding (geometric) section of $p$. In a slight abuse of notation, for $c\in C$ we variously regard $\tau_z(c)$ as a line in $Y$ and as the corresponding point in $\cF$.

\subsection{The Fano scheme of lines}
This section describes the irreducible components of the Fano scheme of lines on $Y$, denoted $F(Y)$. These lines come in three types: those contained in $P$, those meeting $P$ once, and those disjoint from $P$. A line that meets $P$ once belongs to $\cF$, and a line contained in $P$ belongs to the dual plane $P^*$; let $\cU\subset F(Y)$ be the open subsceheme composed of lines in $Y$ not meeting $P$.

\begin{prop}\label{prop:fanothree}
$F(Y)$ contains the following components: 
\begin{enumerate}
    \item the plane $P^*$ dual to $P$,
    \item a ruled surface $\cF$ over a curve $C$ of arithmetic genus $2$,
    \item and a singular surface $\oU$, geometrically birational to $\Sym^2C$.
\end{enumerate}
\end{prop}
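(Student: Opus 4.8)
The plan is to stratify $F(Y)$ according to how a line $L \subset Y$ meets the plane $P$, which gives exactly the three cases listed, and then identify each stratum. A line $L \subset Y$ either lies in $P$, meets $P$ in a single point, or is disjoint from $P$; these are locally closed conditions on $F(Y)$. First I would handle the locus of lines contained in $P$: since $Y$ contains $P$, every line in $P$ lies on $Y$, so this locus is precisely the dual plane $P^* \cong \bP^2$, and I would check it is a connected component (no line in $P$ can be deformed off $P$ inside $Y$, because the pencil of quadrics cut out on $Y$ by planes through $P$ is non-degenerate enough — any such deformation would force a second plane in $Y$, contradicting generality). This gives (1). For (2), I would invoke the discussion already set up: a line meeting $P$ in exactly one point lies in a fiber of the quadric surface fibration $q \colon \cov Y \to \bP^1$, so the closure of this locus is exactly $\cF = F(\cov Y / \bP^1)$, which by the Stein factorization is a conic bundle $p \colon \cF \to C$ with $C$ of arithmetic genus $2$; being a conic bundle over a curve, $\cF$ is a ruled surface over $C$. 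I should check $\cF$ is closed in $F(Y)$ (a limit of lines meeting $P$ once either still meets $P$ once or becomes a line in $P$ — but the latter would glue $\cF$ to $P^*$, so I need to verify that a line in $\cF$ specializing into $P$ doesn't actually happen, or that if it does the surface $\cF$ is still a well-defined component; the cleanest route is dimension-counting plus the fiber structure of $q$).

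The substantive part is (3): the surface $\oU$, defined as the closure of the open subscheme $\cU \subset F(Y)$ of lines disjoint from $P$, and the claim that it is geometrically birational to $\Sym^2 C$. Here I would pass to the algebraic closure and exhibit an explicit rational map. Given a general line $L \subset Y$ disjoint from $P$, projection from $P$ maps $L$ isomorphically to $\bP^1$; composing with the double cover structure, or rather: the line $L$ meets the quadric surface fibers of $q$, and I would associate to $L$ the divisor on $C$ recording the fibers of $q$ in which $L$ fails to be "transverse" in the appropriate sense — concretely, a line disjoint from $P$ is not in any fiber of $q$, but the $3$ points of $L$ in the base $\bP^1$ over which... — the standard construction (cf. the cited \cite{hadan}) sends $L$ to the length-$2$ subscheme of $C$ cut out by requiring that the two rulings of a fiber $Q$ meeting $L$ coincide with ... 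I would follow the recipe: a cubic threefold's lines disjoint from $P$ are parametrized, via the conic bundle, by a correspondence with pairs of points of $C$. The cleanest formulation: for $L$ disjoint from $P$, the intersection $L \cap \cov Y$ under the blowup identifies $L$ with a bisection-type datum; projecting, $L$ meets three fibers... Rather than pin down the exact map here, I would cite the $\bC$-analysis in \cite[Sec. 3.3]{hadan} and verify the same correspondence is defined over $k$ (this is legitimate since all the varieties in play — $\cov Y$, $q$, $C$, $\cF$ — are defined over $k$ by Lemma~\ref{lemm:singthreefold}), then check the map $\cU \dashrightarrow \Sym^2 C$ is birational by a dimension count ($\dim \cU = 2 = \dim \Sym^2 C$) together with generic injectivity on geometric points: two points of $C$ reconstruct $L$ as the unique line through the two corresponding lines-in-fibers, when those two lines are disjoint and coplanar in $Y$.

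The main obstacle I expect is the birationality of $\oU$ with $\Sym^2 C$, specifically showing the natural map is generically one-to-one rather than, say, two-to-one or undefined on too large a locus — one must argue that a generic line $L$ disjoint from $P$ determines, and is determined by, an unordered pair of lines lying in fibers of $q$ (equivalently a degree-$2$ divisor on $C$), and rule out the possibility that the reconstruction of $L$ from such a pair fails generically (e.g. if the two fiber-lines were generically skew in $\bP^4$ but not coplanar). This is exactly the kind of incidence-geometry computation that is routine over $\bC$ but needs the characteristic-$\neq 2$ hypothesis (entering through the double cover $g \colon C \to \bP^1$) to be stated carefully. A secondary, more bookkeeping-level obstacle is confirming that $P^*$, $\cF$, and $\oU$ are genuinely the \emph{only} components — i.e. that $F(Y)$ is set-theoretically their union — which follows from the trichotomy on how $L$ meets $P$ together with the fact (from Lemma~\ref{lemm:singthreefold}(iv)) that $Y$ has only isolated singularities, so no extra component of lines can hide in the singular locus.
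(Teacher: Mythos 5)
There is a genuine gap in part (3). The paper's proof of the birationality $\oU\dashrightarrow\Sym^2C$ hinges on one idea that your proposal never lands on: fix a singular point $z\in Z=P\cap\Sing(Y)$ and use it as the anchor of the correspondence. For $L\subset Y\setminus P$, the plane $P'$ spanned by $L$ and $z$ meets $Y$ in a cubic curve that is singular at $z$ and contains $L$, so the residual conic degenerates into two lines $M,N$ \emph{through $z$}; each of these meets $P$ at $z$ without lying in it, so $[M],[N]\in\cF$ and $\{p([M]),p([N])\}\in\Sym^2C$. Conversely, given $\{c,d\}$, the lines $\tau_z(c)$ and $\tau_z(d)$ both pass through $z$ (this is where the sections $\tau_z$ of $p$ coming from points of $Z$, Lemma~\ref{lemm:singthreefold}(iii), are used), hence span a plane whose residual intersection with $Y$ is the line $L$. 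Your sketch of the inverse --- ``the unique line through the two corresponding lines-in-fibers, when those two lines are disjoint and coplanar'' --- is not a construction: two distinct coplanar lines in projective space are never disjoint, and two lines lying in different fibers of $q$ are generically skew, in which case there is no distinguished transversal and no residual line. You correctly flag this as the main obstacle (``rule out the possibility that the reconstruction of $L$ from such a pair fails generically''), but deferring to \cite[Sec.~3.3]{hadan} in place of resolving it leaves the heart of the proposition unproved; the resolution is precisely that the relevant pairs of fiber-lines are forced to meet because they share the node $z$.

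A smaller but real error: your argument for (1) asserts that $P^*$ is a \emph{connected} component because ``no line in $P$ can be deformed off $P$ inside $Y$.'' This is false and is contradicted later in the paper: $\oU\cap P^*$ is the union of the pencils $z^*$ for $z\in Z$ (Lemma~\ref{lemm:ubarp}), so every line in $P$ through a point of $Z$ is a limit of lines disjoint from $P$, and $\cF\cap P^*$ is nonempty as well (Lemma~\ref{lemm:fcapp}). What is true, and all that is needed, is that $P^*$ is an irreducible component; the statement only asserts containment of components, so your worry about exhaustiveness is not required here. Parts (1) and (2) otherwise follow from the setup preceding the proposition, as the paper itself notes.
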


\begin{proof}
All that requires proof is the birational geometry of $\oU$ over the separable closure $k^s$.

Choose a point $z\in Z$. For a line $L\subset Y\setminus P$, let $P'$ be the plane spanned by $L$ and $z$. Since $Y$ is singular at $z$,  so too is the curve $P'\cap Y$. Thus $P'\cap Y$ consists of three lines: $L$ and two other lines $M$ and $N$ (which may coincide with one another), both containing $z$. Note $[M],[N]\in\cF$ since $M$ and $N$ meet but are not contained in $P$. The equation $\varphi_z([L])=\{p([M]),p([N])\}$ defines a morphism $\varphi_z:\cU\to\Sym^2C$.

To show $\varphi_z$ is a birational equivalence, we construct its rational inverse. Let $V\subset\Sym^2C$ be the open set consisting of pairs $\{c,d\}$ of distinct points such that the lines $\tau_z(c)$ and $\tau_z(d)$ are not both contained in $P$. For such a pair, the plane $P'$ spanned by $\tau_z(c)$ and $\tau_z(d)$ meets $Y$ in a third line $L$, and we define a morphism $\psi_z:V\to F(Y)$ by $\psi_z(c,d)=[L]$. It is straightforward that $\psi_z\circ\varphi_z=\text{id}_\cU$, as needed.
\end{proof}

\begin{rema}\label{rema:hilb}
Let $S_z\subset Y$ be the union over $c\in C$ of the lines $\tau_z(c)$, which is a cone over $C$. An unreduced length-$2$ subscheme of $C$ specifies a line $\tau_z(c)\subset S_z$ and a normal direction to that line in $S_z$, and together these span a plane $P'$ meeting $Y$ in a third line. In this way, one can instead define $\psi_z$ as a rational map $\Hilb^2C\dashrightarrow\cU$. The domain of $\psi_z$ under this new definition is the complement of at most three points, i.e. the pairs $\{c,d\}$ of distinct points with $\tau_z(c),\tau_z(d)\subset P$. 
\end{rema}

A line $L\subset Y\setminus P$ gives a section of $q$ and hence a section $\sigma_L$ of $p$. When $C$ is smooth, so too is $\cF$, and the following lemma describes the numerics of the two types of sections of $p$ mentioned so far. These calculations are used later in Section~\ref{sect:rationality}.

\begin{lemm}\label{lemm:intpairing}
Suppose $C$ is smooth, and let $[L],[M]\in\cU$ and $z,w\in Z$. Under the intersection pairing on $\cF$, 
\begin{enumerate}
    \item[(i)] $\sigma_L.\tau_z=2$,
    \item[(ii)] $\sigma_L.\sigma_M=3$, and
    \item[(iii)] $\tau_z.\tau_w=1$.
\end{enumerate}
\end{lemm}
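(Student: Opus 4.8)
The plan is to compute each of the three numbers by reading it off as the number of points $c\in C$ at which the two sections agree. Since $C$ is smooth, $\cF$ is a smooth ruled surface over $C$, and for sections $\sigma,\sigma'$ of $p$ the product $\sigma.\sigma'$ is the length of the subscheme of $C$ where $\sigma(c)=\sigma'(c)$; for sections built from sufficiently general data this locus is reduced, so $\sigma.\sigma'$ is just the number of such $c$. Now $\sigma(c)$ and $\sigma'(c)$ are two lines in the ruling of the quadric $q^{-1}(g(c))$ labelled by $c$, so they coincide exactly when the two points cut out on $q^{-1}(g(c))$ by the corresponding sections of $q$ lie on one line of that ruling, i.e.\ when the line $N$ through those two points is contained in $q^{-1}(g(c))$. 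Thus each part becomes an incidence count among lines of $Y$, subject throughout to one caveat: a line of $Y$ represents a point of $\cF$ only if it meets $P$, because $\cF$ is the closure of the locus of lines meeting $P$ in one point and that condition is closed in $\Gr(2,5)$.

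For (ii) the sections of $q$ attached to $\sigma_L,\sigma_M$ are $L$ and $M$ themselves, so $\sigma_L(c)=\sigma_M(c)$ precisely when a line $N$ of $Y$ meets both $L$ and $M$; such an $N$ lies in the $\bP^3$ spanned by $L$ and $M$, which cuts $P$ in a line $\ell_P$ and cuts $Y$ in a cubic surface $\Sigma\supset L,M,\ell_P$ (three mutually skew lines, $L$ and $M$ being disjoint from $P$). Because $N$ is then a point of $\cF$ it meets $P$, hence meets $\ell_P$, so $N$ is a common transversal in $\Sigma$ to $L,M,\ell_P$. On the unique quadric $R$ containing $L,M,\ell_P$ --- smooth for general $L,M$ --- the divisor $\Sigma\cap R$ has class $(3,3)$ and contains the three rulings $L,M,\ell_P$, so it equals $L\cup M\cup\ell_P$ together with exactly three rulings of the opposite family; these are the three common transversals, and each is realized as a value of both $\sigma_L$ and $\sigma_M$. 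Hence $\sigma_L.\sigma_M=3$.

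For (iii), if $\tau_z(c)=\tau_w(c)$ then the common line passes through both $z$ and $w$, so it is $\overline{zw}\subset P$; this lies in $q^{-1}(g(c))$ exactly when the conic $q^{-1}(g(c))\cap P$, a member of the pencil of conics through $Z$, degenerates as $\overline{zw}\cup\overline{z'w'}$ with $\{z',w'\}=Z\setminus\{z,w\}$. As $Z$ is four points in general position when $C$ is smooth, this happens for a single $t_0$, at which $P$ is tangent to the (generically smooth) quadric $q^{-1}(t_0)$ with the two rulings through the tangency point being $\overline{zw}$ and $\overline{z'w'}$. Taking $c_0$ over $t_0$ in the ruling containing $\overline{zw}$ gives $\tau_z(c_0)=\tau_w(c_0)=\overline{zw}$, a boundary point of $\cF$ lying on $P^*$, and a short check shows this is the only coincidence; so $\tau_z.\tau_w=1$. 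Finally (i) follows in the same spirit: $\sigma_L(c)=\tau_z(c)$ amounts to a line of $Y$ through $z$ meeting $L$, and the plane $\langle L,z\rangle$ cuts $Y$ in a cubic curve singular at $z$ and containing $L$, hence in $L$ together with two lines through $z$, each of which meets $P$ only at $z$ and so lies in $\cF$; this gives $\sigma_L.\tau_z=2$. Alternatively one can deduce (i) formally from the identity $\sigma.\sigma'=\tfrac12(\sigma^2+\sigma'^2)$ for sections of a $\bP^1$-bundle together with $\sigma_L^2=3$ and $\tau_z^2=1$, which (ii) and (iii) yield once one notes these classes are independent of $L\in\cU$ and $z\in Z$ (e.g.\ by monodromy).

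The main obstacle is the bookkeeping at the boundary of $\cF$: one must keep track of the fact that a line of $Y$ is a point of $\cF$ only when it meets $P$, and in (iii) the unique intersection point is exactly such a boundary point --- a line contained in $P$ --- so the count collapses to $0$ if this is missed; similarly in (ii) this constraint is what cuts the naive count of lines meeting $L$ and $M$ down to the three that also meet $\ell_P$. Beyond this, the remaining work is to check the relevant reducedness/transversality --- that for general $L,M,z,w$ these coincidence loci have the expected length --- which can be handled by a tangent-space computation in $\Gr(2,5)$ or sidestepped for (i) via the identity above, and to invoke the generality of $Y$ to ensure that $Z$ is four points in general position and that the auxiliary quadric $R$ in (ii) is smooth.
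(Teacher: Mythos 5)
Your computations of (i) and (ii) follow essentially the same route as the paper: for (i) the plane spanned by $L$ and $z$ meets $Y$ in $L$ plus two lines through $z$, and for (ii) the count of lines meeting $L$, $M$, and the line $\ell_P=\mathrm{span}(L,M)\cap P$ inside the cubic surface $\mathrm{span}(L,M)\cap Y$ gives $3$ (you obtain the three transversals from the $(3,3)$-class on the quadric through three skew lines rather than quoting the incidence combinatorics of the $27$ lines, which is a harmless variation). Where you genuinely diverge is (iii): you count the coincidences of $\tau_z$ and $\tau_w$ directly, locating the unique one at the member $\overline{zw}\cup\overline{z'w'}$ of the pencil of conics through $Z$. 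The identification of that unique coincidence point is correct, but the assertion that it contributes multiplicity one is the weak link. Unlike (i) and (ii), you cannot buy transversality by genericity here, since $z$ and $w$ range over only the four fixed points of $Z$; the sections $\tau_z,\tau_w$ could a priori be tangent at $c_0$, and the promised ``tangent-space computation in $\Gr(2,5)$'' is exactly the nontrivial content you have not supplied. The paper sidesteps this entirely: since $\Num(\cF)$ is generated by a fiber $f$ and any section, (i) and (ii) force $\tau_z\equiv\sigma_L-f$, whence $\tau_z.\tau_w=(\sigma_L-f)^2=3-2=1$ with no local analysis. (Your ``alternative'' derivation of (i) from the identity $\sigma.\sigma'=\tfrac12(\sigma^2+\sigma'^2)$ is fine but runs the logic in the opposite direction and still presupposes $\tau_z^2=1$, so it does not rescue (iii).) I would either supply the tangency check at $c_0$ or, better, replace your proof of (iii) by the numerical-equivalence argument, which is both shorter and complete.
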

\begin{proof}
We may work over $k^s$. The equation (i) follows from the proof of Theorem~\ref{theo:fanothree}: $[M]\in\tau_z(C)$ if and only if $z\in M$, $[M]\in\sigma_L(C)$ if and only if $L\cap M=\emptyset$, and there are two lines meeting $z$ and $L$. For (ii), it is enough to calculate $\sigma_L.\sigma_M$ for a particular choice of $L$ and $M$: indeed, Theorem~\ref{theo:fanothree} implies $\cU$ is irreducible when $C$ is smooth, and a curve in $\cU$ connecting two points gives an algebraic (hence also numerical) equivalence between the corresponding sections of $\cF$. A generic hyperplane section of $Y$ is a smooth cubic surface $S$ and contains skew lines not meeting $P$, which we may assume for the purpose of our calculation are $L$ and $M$.

Note that the span of $L$ and $M$ intersects $P$ in a line $\ell$, and the intersection number $\sigma_L.\sigma_M$ counts how many lines in $Y$ meet $L$, $M$, and $P$, which are exactly the lines in $S$ meeting $L$, $M$, and $\ell$. Given three disjoint lines in a cubic surface, there are three other lines meeting each of those, proving (ii).

Because $\Num(\cF)$ is generated by the class $f$ of a fiber and the class of any section (see, for example, \cite[V.2]{hartshorne}), there are integers $a,b\in\bZ$ with $\tau_z\sim a\sigma_L+bf$. Pairing with $f$ yields $a=1$, and pairing with $\sigma_L$ yields $b=-1$. Thus $\tau_z\sim \sigma_L-f$, and the same is true for $\tau_w$, so (iii) follows from (i) and (ii).
\end{proof}

The main object of study in later sections is the surface $\oU\subset F(Y)$. To understand the boundary of this surface, $\oU\setminus\cU$, we analyze the pairwise intersections of the three components of $F(Y)$.

\begin{lemm}\label{lemm:fcapp}
$\cF\cap P^*$ is zero-dimensional.
\end{lemm}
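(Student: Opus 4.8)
The plan is to identify $\cF\cap P^*$ set-theoretically in terms of the pencil of conics $\{C_{(s:t)}\}$ in $P$, where $C_{(s:t)}=Q_{(s:t)}\cap P$, and then bound it directly. Since zero-dimensionality may be checked after base change, I would work over $\overline k$. Because $\cF=F(\cov Y/\bP^1)$ parametrizes lines contained in fibers of $q$, a line $\ell\subset P$ represents a point of $\cF$ precisely when $\ell\subset Q_{(s:t)}$ for some $(s:t)$; as $Q_{(s:t)}\cap P$ is the conic $C_{(s:t)}=\{sq_0+tq_1=0\}$ (with $q_i=Q_i|_P$), this happens precisely when $\ell$ is a component of $C_{(s:t)}$. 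Thus, as a set, $\cF\cap P^*$ is the union of the line components of the members of the pencil $\{C_{(s:t)}\}_{(s:t)\in\bP^1}$.

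First I would note the pencil has no fixed line component: a line $\ell$ dividing both $q_0$ and $q_1$ would be contained in $Z=\{q_0=q_1=0\}$, contradicting $\dim Z=0$; and consequently no line is a component of two distinct members, since it would then divide every combination of $q_0$ and $q_1$. The reducible members of the pencil are the zeros on $\bP^1$ of the binary cubic $\det(sM_0+tM_1)$, where $M_i$ is the symmetric matrix of $q_i$. Provided this cubic does not vanish identically, there are at most three reducible members, each with at most two line components, so $\cF\cap P^*$ has at most six points and is zero-dimensional.

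The crux — and the step I expect to be the main obstacle — is therefore to rule out that every member of the pencil is reducible (equivalently $\det(sM_0+tM_1)\equiv 0$). I would argue as follows: since there is no fixed component, the generic member cannot be a double line, so it has rank exactly $2$ and its unique singular point defines a morphism $\bP^1\to\bP^2=P$. If this morphism were non-constant, then two general members would be pairs of lines with distinct vertices, and their intersection — which is the length-$4$ base scheme $Z$ — would be the four pairwise intersections of four fixed lines; but then each member, being a pair of lines through its (varying) vertex that together pass through the four fixed points of $Z$, would be one of only finitely many conics, contradicting the injectivity of $(s:t)\mapsto C_{(s:t)}$. Hence all members are singular at one common point $v$, so $q_0$ and $q_1$ are cones with vertex $v$ and $Z$ is a length-$4$ scheme concentrated at $v$ with two-dimensional Zariski tangent space there. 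This configuration does not occur for $Y$ general (for instance when $Z$ is reduced), so the pencil is not totally reducible and the count in the previous paragraph finishes the proof. One can also phrase the reduction geometrically: if $\cF\cap P^*$ were one-dimensional it would contain a curve $B$, and since no full ruling of a fiber $Q_{(s:t)}$ can consist of lines in $P$ — such lines sweep out all of $Q_{(s:t)}$, not a curve in $P$ — the map $p|_B\colon B\to C$ cannot have finite image, so it is surjective, again forcing every $C_{(s:t)}$ to be reducible.
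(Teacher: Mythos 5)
Your identification of $\cF\cap P^*$ with the line components of the degenerate members of the pencil of conics $\{C_{(s:t)}\}$ through $Z$, and the bound of at most three degenerate members coming from the binary cubic $\det(sM_0+tM_1)$, is exactly the route the paper takes (the paper phrases it as ``there are at most three degenerate conics containing $Z$,'' via Lemma~\ref{lemm:singthreefold}(ii)). You actually go further than the paper by isolating the one place the count can fail, namely $\det(sM_0+tM_1)\equiv 0$, and your reduction of that case to a pencil of line-pairs with a common vertex is essentially right: the generic member cannot be a double line (the Veronese surface contains no lines), and your four-point argument correctly rules out a moving vertex when $Z$ is reduced with no three points collinear (collinearity would force a fixed component; the non-reduced variants of this step deserve a sentence but are not the real issue).

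The genuine gap is the final step, where you dismiss the common-vertex configuration with ``this does not occur for $Y$ general (for instance when $Z$ is reduced).'' The paper's standing definition of general is only that $Y$ contains exactly one plane and $\dim Z=0$, and that does \emph{not} exclude this configuration: taking $Q_0|_P=x_2x_3$ and $Q_1|_P=x_2^2-x_3^2$ makes $Z$ a length-four scheme concentrated at the single point $(0:0:0:0:1)$ with two-dimensional tangent space, hence still zero-dimensional, and for generic choices of the remaining coefficients of $Q_0,Q_1$ one does not create a second plane; yet every conic $sx_2x_3+t(x_2^2-x_3^2)$ is a pair of lines, so $\cF\cap P^*$ would be a curve. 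So what is really needed is the additional open condition that $Q_0|_P$ and $Q_1|_P$ have no common singular point (equivalently, that $Z$ is not a single fat point of embedding dimension two), which holds when $Z$ is reduced or curvilinear but is not implied by $\dim Z=0$. In fairness, the paper's own proof asserts ``at most three degenerate conics containing $Z$'' with no justification, so it elides exactly the same point; your write-up at least makes the missing hypothesis visible. To finish honestly, you should either add this condition to the definition of generality or verify it in the situations where the lemma is actually applied (e.g., hyperplane sections of a general cubic fourfold, where the relevant net of conics generically avoids the codimension-four locus of common-vertex pencils), rather than appealing to an unquantified ``generality.''
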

\begin{proof}
If $[L]\in\cF\cap P^*$, then $L$ is contained in a quadric surface $Q$ meeting $P$ in a degenerate conic (one component of which is $L$). There are at most three degenerate conics containing $Z$, so there are finitely many quadrics $Q$ for which $Q\cap P$ is degenerate by by Lemma~\ref{lemm:singthreefold} (ii). Each of these quadrics contributes two points to $\cF\cap P^*$.
\end{proof}

\begin{rema}
Since $\cF$ and $P^*$ meet in codimension 2, Hartshorne's Connectedness Theorem \cite[Thm. 18.12]{eisenbud} implies that
\[
\overline{F(Y)\setminus\oU}=\cF\cup P^*
\]
is not Cohen-Macaulay. By \cite[Thm. 21.23]{eisenbud}, $\oU$ is not Cohen-Macaulay. In particular, $\oU$ is not smooth.
\end{rema}

It is well-known that $F(Y)$ is smooth at $[L]$ if and only if $Y$ is smooth along $L$ (see, for example, \cite{barth} or \cite[Prop. 6.24]{3264}). The following two lemmas also use the basic fact that a scheme is singular along the intersection of any two irreducible components.

\begin{lemm}\label{lemm:ubarp}
$\oU\cap P^*$ is the union of the pencils $z^*\subset P^*$ of lines in $P$ through each of the points $z\in Z$.
\end{lemm}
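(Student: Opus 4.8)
**Proof proposal for Lemma (describing $\oU \cap P^*$).**

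The plan is to compute this intersection directly using the birational description of $\oU$ from Proposition~\ref{prop:fanothree}. Recall that $\oU$ is the closure in $F(Y)$ of the locus $\cU$ of lines disjoint from $P$, and that for a chosen point $z \in Z$ the map $\psi_z$ realizes $\oU$ as (the closure of) the image of a rational map from $\Sym^2 C$ (or $\Hilb^2 C$, as in Remark~\ref{rema:hilb}) sending $\{c,d\}$ to the residual line $L$ of the plane $P'$ spanned by $\tau_z(c)$ and $\tau_z(d)$. The first step is to identify which pairs $\{c,d\}$ degenerate to a line in $P^*$: these are exactly the pairs for which the residual line $L$ lies in $P$. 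Since $P' \cap Y = \tau_z(c) \cup \tau_z(d) \cup L$ and $P'$ always passes through $z$, if $L \subset P$ then $P' \cap P$ is a line in $P$ through $z$; conversely such a degeneration occurs precisely along the boundary locus described in Remark~\ref{rema:hilb}, namely when $\tau_z(c)$ and $\tau_z(d)$ are both contained in $P$ (here the "plane" $P'$ is not determined and one must take the limiting behavior).

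The second step is to parametrize the limits. Fix $z \in Z$ and let $\ell \subset P$ be any line through $z$. The plane $\ell$ together with $z$'s being a singular point of $Y$ forces $\ell \cdot Y = 3z$ as a divisor on $\ell$ only in special position; more to the point, one should instead vary $P'$ in the pencil of planes containing $\ell$ and containing $z$: as $P'$ rotates about a line through $z$ not in $P$, the residual line $L$ sweeps out a family, and the limit as $P'$ approaches $P$ (or approaches a plane meeting $P$ in $\ell$) is the line $\ell$ itself. Concretely, I would take a general pencil of planes through a fixed line $m$ with $z \in m$, $m \not\subset P$, compute the residual line for each, and check that as the plane tends to the one containing $\ell = \langle z, m \cap P\rangle$ one does not get $\ell$, but rather: the correct family to consider is the one-parameter family of pairs $\{\tau_z(c), \tau_z(d)\}$ both degenerating into $P$, whose spans fill out all planes through $z$ in $P$, i.e. all lines $\ell \ni z$ in $P$, which is exactly the pencil $z^* \subset P^*$. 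Each $z \in Z$ thus contributes the pencil $z^*$, giving $\oU \cap P^* \supseteq \bigcup_{z \in Z} z^*$.

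For the reverse inclusion, I would argue that $\oU \cap P^*$ can contain nothing else. Since $\oU$ is birational to a surface dominated by $\Sym^2 C$ and $\oU \cap P^*$ is (at most) a curve — it is contained in the singular locus of $F(Y)$ where two components meet, and one can bound its dimension — every point of $\oU \cap P^*$ arises as a limit $\lim \psi_z(\{c_t, d_t\})$ for some one-parameter family. Analyzing which limits land in $P^*$: the residual line $L_t \to L_0 \subset P$ forces, via the incidence $\tau_z(c_t) + \tau_z(d_t) + L_t = P'_t \cap Y$ and $z \in L_0$, that $L_0$ passes through $z$; hence $L_0 \in z^*$. This shows $\oU \cap P^* \subseteq \bigcup_{z\in Z} z^*$, completing the proof.

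The main obstacle I expect is the limit computation in the second step: the rational map $\psi_z$ is genuinely undefined at the points of $\Sym^2 C$ (or $\Hilb^2 C$) where both $\tau_z(c)$ and $\tau_z(d)$ lie in $P$, so one cannot just "plug in" — one must resolve the indeterminacy, either by working on a blowup or by an explicit coordinate computation choosing a versal one-parameter family of planes $P'_t$ degenerating appropriately and tracking the residual cubic factor. Getting the multiplicities right (so that $\oU \cap P^*$ is genuinely the reduced union of pencils, or noting scheme-theoretic subtleties) is where the real work lies; the rest is formal once the boundary behavior of $\psi_z$ is pinned down.
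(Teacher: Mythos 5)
Your proposal misidentifies where the map $\psi_z$ degenerates into $P^*$, and this error propagates through both inclusions. The locus of pairs $\{c,d\}$ whose residual line lies in $P$ is \emph{not} the indeterminacy locus of $\psi_z$ (the finitely many pairs with $\tau_z(c),\tau_z(d)\subset P$); it is the one-parameter family $E=\{\{a,\bar a\}\}$ of hyperelliptic-conjugate pairs, on which $\psi_z$ is perfectly well defined: the two lines $\tau_z(a),\tau_z(\bar a)$ through $z$ in a quadric $Q$ span the tangent plane $T_zQ$, whose residual line is $T_zQ\cap P\in z^*$ (cf.\ Lemma~\ref{lemm:domainic}(v)). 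That family accounts only for the single pencil $z^*$. The remaining pencils $w^*$, $w\neq z$, arise precisely from resolving the indeterminacy at the three points where both lines lie in $P$: there the limiting plane is $P$ itself, your argument ``$L\subset P\Rightarrow L=P'\cap P\ni z$'' breaks down because $P'\cap P$ is two-dimensional, and the limits of the residual lines sweep out lines through $w$, not through $z$ (this is the content of Lemma~\ref{lemm:graphpsiz}(i), which is proved \emph{after}, and using, the present lemma --- so leaning on Remark~\ref{rema:hilb}/Lemma~\ref{lemm:graphpsiz} here is also circular). As a result, your reverse-inclusion argument, taken at face value, proves $\oU\cap P^*\subseteq z^*$ for a fixed $z$, which is false, and your forward inclusion never actually produces $w^*$ for $w\neq z$. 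You also assume without proof that every point of $\oU\cap P^*$ is a limit $\lim\psi_z(\{c_t,d_t\})$ of the well-behaved kind, and that $\oU\cap P^*$ is at most a curve.

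The paper's proof is entirely different and sidesteps all limit computations: a line $\ell\subset P$ is a singular point of $F(Y)$ if and only if $Y$ is singular along $\ell$, i.e.\ if and only if $\ell$ meets $Z$, so $P^*\cap\Sing(F(Y))=\bigcup_{z\in Z}z^*$; since $P^*$ is smooth, every such point must lie on a second component of $F(Y)$, giving $\bigcup_z z^*=(\oU\cap P^*)\cup(\cF\cap P^*)$; finally $\cF\cap P^*$ is finite by Lemma~\ref{lemm:fcapp}, while the left-hand side is purely one-dimensional and $\oU\cap P^*$ is closed, forcing $\bigcup_z z^*=\oU\cap P^*$. If you want to salvage a direct approach via $\psi_z$, you would need to (a) show $\psi_z$ maps the conjugate-pair curve $E$ onto $z^*$, and (b) resolve the indeterminacy at the three boundary points and track where the exceptional curves land --- in effect proving Lemma~\ref{lemm:graphpsiz} first.
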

\begin{proof}
A line in $P$ is a singular point of $F(Y)$ if and only if it intersects $Z$ nontrivially, so 
\[
P^*\cap\text{Sing}(F(Y))=\bigcup_{z\in Z}z^*.
\]
Moreover, $P^*$ is smooth, so any point in $P^*\cap\text{Sing}(F(Y))$ belongs to a second component of $F(Y)$. Then
\[
\bigcup_{z\in Z}z^*=(P^*\cap\oU)\cup(P^*\cap\cF).
\]
The left-hand side is purely 1-dimensional, and  $P^*\cap\cF$ is 0-dimensional by Lemma~\ref{lemm:fcapp}, so 
\[
\bigcup_{z\in Z}z^*=\oU\cap P^*,
\]
as needed.
\end{proof}

\begin{lemm}\label{lemm:graphpsiz}
Suppose $z\in Z$ is a rational point, so the morphism $\psi_z:\Hilb^2C\dashrightarrow\oU$ from Theorem~\ref{theo:fanothree} and Remark~\ref{rema:hilb} is defined. Let $\Gamma$ be the graph of $\psi_z$. Then
\begin{enumerate} 
\item[(i)] the projection $\pi_2:\Gamma\to\oU$ is an isomorphism except over the zero-dimensional subscheme $\oU\cap\cF\cap P^*$, over which the fibers contain two points,
\item[(ii)] $\Gamma$ is the blowup of $\oU$ along $\oU\cap P^*$,
\item[(iii)] the projection $\pi_1:\Gamma\to\Hilb^2C$ is the blowup of $\Hilb^2C$ at the finitely many points along $E$ corresponding to the pairs $\{c,d\}$ for which $\tau_z(c),\tau_z(d)\in P^*$.
\end{enumerate}
\end{lemm}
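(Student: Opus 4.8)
The strategy is to realize $\Gamma\subset\Hilb^2C\times\oU$ as the closure of the graph of $\psi_z$, equivalently of the inverse birational map $\varphi_z\colon\oU\dashrightarrow\Hilb^2C$. Then $\pi_1$ and $\pi_2$ are proper and birational; $\pi_1$ is already an isomorphism over the complement of the finite set $B\subset\Hilb^2C$ of pairs $\{c,d\}$ with $\tau_z(c),\tau_z(d)\subset P$ (Remark~\ref{rema:hilb}), and $\pi_2$ is already an isomorphism over $\cU$, where $\varphi_z$ is a morphism. I would prove (iii) first, since it carries the most information. Fixing $\xi_0=\{c_0,d_0\}\in B$ and working in local coordinates on $\Hilb^2C$ near $\xi_0$ (where it agrees with $\Sym^2C$), the lines $\tau_z(c),\tau_z(d)$ sweep out a two-parameter family of planes $P'_{\{c,d\}}=\langle\tau_z(c),\tau_z(d)\rangle\subset\bP^4$, all containing $z$ and limiting to $P$ as $\{c,d\}\to\xi_0$; writing the residual line of $P'_{\{c,d\}}\cap Y$ explicitly, the point to check is that the base scheme of $\psi_z$ at $\xi_0$ is reduced of length one, so that a single blowup of $\xi_0$ resolves $\psi_z$ and the exceptional $\bP^1$ maps nonconstantly to $\oU$. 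Granting this at each point of $B$ gives (iii); in particular $\Gamma$ is a smooth projective surface.

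For (i), the content is over the boundary $\oU\setminus\cU\subseteq\cF\cup P^*$, where I would compute $\pi_2^{-1}([L'])$ as the locus of $\xi\in\Gamma$ on which the extension of $\psi_z$ takes the value $[L']$. If $L'$ meets $P$ in a single point and $L'\not\subset P$, then $\langle L',z\rangle$ cuts $Y$ in $L'$ together with two lines through $z$, giving the unique preimage $\varphi_z([L'])$ (one checks single-valuedness persists in the degenerate cases $z\in L'$ or a component through $z$ lying in $P$, using the resolution of $\psi_z$ from (iii)). If $L'\subset P$ with $[L']\notin\cF$, then $[L']$ lies on exactly one of the pencils comprising $\oU\cap P^*$ (Lemma~\ref{lemm:ubarp}), and is hit either by $\psi_z$ on its domain or by a single point of one exceptional $\bP^1$, again a unique preimage. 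The remaining points are those of $\oU\cap\cF\cap P^*$: such an $L'$ is a line in $P$ that is a component of a degenerate member of the pencil of quadrics, and I expect it to be realized as the value of $\psi_z$ at exactly two distinct points of $\Gamma$---one the extension of $\varphi_z$ across $\cF$, the other a point of one of the exceptional curves from (iii)---so $\pi_2^{-1}([L'])$ consists of two points.

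For (ii): by (iii), $\Gamma$ is smooth, and by (i), $\pi_2$ is proper, birational, and an isomorphism away from the finite set $\oU\cap\cF\cap P^*$, hence quasi-finite, hence finite; so $\pi_2$ is the normalization of $\oU$, separating its two branches at each point of $\oU\cap\cF\cap P^*$ and an isomorphism elsewhere. To identify $\pi_2$ with the blowup of $\oU$ along $\oU\cap P^*$, I check that $\cI_{\oU\cap P^*}\cdot\cO_\Gamma$ is invertible: away from $\oU\cap\cF\cap P^*$ this holds because a short local computation shows $\oU$ is smooth there, so $\oU\cap P^*$ is a Cartier divisor on it; and at the finitely many points of $\oU\cap\cF\cap P^*$ it follows from the local picture in (iii). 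The universal property of blowing up then gives a morphism $\Gamma\to\Bl_{\oU\cap P^*}\oU$ over $\oU$ which is birational and an isomorphism away from the finite set, and comparing the two-point fibers (equivalently the semilocal rings) over the points of $\oU\cap\cF\cap P^*$ shows it is an isomorphism.

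\textbf{Main obstacle.} The crux is the local analysis at the points of $B$: proving that a single blowup resolves $\psi_z$ in (iii), and dually pinning down the singularity of $\oU$ at the corresponding point of $\oU\cap\cF\cap P^*$---an isolated, non-$S_2$ ``two sheets through a point'' singularity, consistent with the failure of Cohen--Macaulayness noted after Lemma~\ref{lemm:fcapp}---so that blowing up $\oU\cap P^*$ separates its branches in exactly the way $\pi_2$ does. This requires an honest coordinate computation in $\bP^4$, together with the identification of the exceptional curve over a point of $B$ with a pencil of lines $(z')^*\subset P^*$ through a suitable node $z'\in Z$.
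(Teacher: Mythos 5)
Your plan is structurally close to the paper's proof: both work with the graph $\Gamma$, identify the three base points of $\psi_z$ as the pairs $\{c,d\}$ with $\tau_z(c),\tau_z(d)\subset P$, show each is resolved by a single blowup with exceptional curve a $\bP^1$, and deduce (ii) from the universal property of the blowup together with the two-point fibers over the six points of $\oU\cap\cF\cap P^*$. The genuine difference is in how your declared ``main obstacle'' is handled. You propose an explicit coordinate computation at each base point to show the base scheme is reduced of length one; the paper avoids all local computation by a global counting argument: $\psi_z$ is injective on its domain (so $\pi_2$ is injective off $E_1\sqcup E_2\sqcup E_3$), $\pi_2$ is surjective onto $\oU$, no pencil $w^*$ for $w\in Z\setminus\{z\}$ lies in $\im(\psi_z)$ while $\im(\psi_z)\cap P^*\subset z^*$, and $\oU\cap P^*$ is the union of the four pencils by Lemma~\ref{lemm:ubarp}; hence the three irreducible curves $E_i$ are forced to surject onto the three pencils $w^*$, $w\neq z$, and non-injectivity is confined to the six pairwise intersections of the four pencils. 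This one argument simultaneously delivers (i), the identification of the $E_i$ with pencils through the other nodes (which you defer to the very end), and the irreducible-$\bP^1$-fiber input needed for (iii), so your coordinate computation can be dispensed with entirely. Your treatment of (ii) is, if anything, more careful than the paper's: you check that $\cI_{\oU\cap P^*}\cdot\cO_\Gamma$ is invertible before invoking the universal property, and the picture of $\pi_2$ as the normalization separating two branches at each of the six points is correct. Two small corrections: over a point $w_i^*\cap w_j^*$ the two preimages lie on two \emph{different} exceptional curves $E_i$ and $E_j$ (only over $z^*\cap w_i^*$ is one preimage supplied by $\psi_z$ on its domain, via the curve of pairs $\{c,\bar c\}$); and your appeals to smoothness of $\Gamma$ and of $\Hilb^2C$ require $C$ smooth, whereas the lemma is later applied with $C$ singular (the paper is similarly terse on the degenerate cases, treating only $Z$ reduced explicitly).
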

\begin{proof}
We assume $Z$ is reduced, though it is straightforward to adapt the argument in each of the cases where $Z$ is nonreduced. For each $c\in C$, let $\bar c$ denote the image of $c$ under the hyperelliptic involution. Let $E=\{\{c,\bar c\}\;|\;c\in C\}\subset\Hilb^2C$, let $s_1,s_2,s_3\in\Hilb^2C$ be the points corresponding to the pairs $\{c,d\}$ for which $\tau_z(c),\tau_z(d)\in P^*$, and let $E_i=\pi_1^{-1}(s_i)$. Note that $\psi_z$ is defined except at $s_1,s_2,s_3$.

First, we claim $\pi_2$ is an isomorphism away from $E_1\sqcup E_2\sqcup E_3$. Since $\psi_z\circ\varphi_z=\mathrm{id}_{\cU}$, we know $\cU$ it contained in the image of $\psi_z$ hence also of $\pi_2$, and so $\pi_2$ is surjective. Moreover, $\psi_z$ is injective on its domain, so $\pi_2$ is injective away from $E_1\sqcup E_2\sqcup E_3$, as needed.

It is straightforward to check that for each point $w\neq z\in Z$, the pencil $w^*$ is not contained in the image of $\psi_z$. Therefore, $\pi_2$ projects $E_1\sqcup E_2\sqcup E_3$ onto the pairwise-intersecting triple of lines 
\[
\bigcup_{w\neq z\in Z}w^*.
\]
Also noticing that $\pi_2(\pi_1^{-1}(E))=z^*$, we obtain (i).

Using the universal property of the blowup, $\pi_2$ factors through a morphism $\rho:\Gamma\to\Bl_{\oU\cap P^*}\oU$. The morphism $\Bl_{\oU\cap P^*}\oU\to\oU$ fails to be an isomorphism over the same six points as $\pi_2$, i.e. the six pairwise intersections of lines in $\oU\cap P^*$ which together compose $\oU\cap\cF\cap P^*$. Since $\pi_2$ is two-to-one over these points, $\rho$ must be an isomorphism.

For (iii), notice that the preimage $E_i$ of each point $s_i$ under $\pi_1$ is isomorphic to $\bP^1$. By the universal property of the blowup, $\pi_1$ factors through a morphism $\Bl_{s_1,s_2,s_3}\Hilb^2C\to\Hilb^2C$ which must be an isomorphism.
\end{proof}

Even if $Z$ has no $k$-point, we deduce the following.

\begin{coro}\label{lemm:blubar}
The blowup $\Bl_{\oU\cap P^*}\oU\to\oU$ is an isomorphism away from the finitely many points $\oU\cap\cF\cap P^*$ and two-to-one over $\oU\cap\cF\cap P^*$.
\end{coro}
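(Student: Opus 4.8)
Corollary \ref{lemm:blubar} follows from Lemma \ref{lemm:graphpsiz} by a Galois descent argument: the statement is geometric in nature (it concerns the structure of a blowup morphism), but Lemma \ref{lemm:graphpsiz} is proved only under the hypothesis that $Z$ contains a rational point $z$. The plan is to first establish the statement over the separable closure $k^s$, and then descend. Over $k^s$, the point set $Z$ is nonempty, so we may pick a geometric point $z \in Z$ and invoke Lemma \ref{lemm:graphpsiz}: part (ii) identifies $\Gamma$ with $\Bl_{\oU \cap P^*}\oU$, and part (i) then says precisely that this blowup is an isomorphism away from the finite set $\oU \cap \cF \cap P^*$ and is two-to-one over it. Since the blowup $\Bl_{\oU \cap P^*}\oU \to \oU$ is intrinsic to the $k$-schemes $\oU$ and $\oU \cap P^*$ (both of which are defined over $k$, being unions of components of $F(Y)$ and their intersections), its base change to $k^s$ is the blowup of $\oU_{k^s}$ along $(\oU \cap P^*)_{k^s}$.

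The second step is the descent itself. The morphism $\Bl_{\oU \cap P^*}\oU \to \oU$ is a morphism of $k$-schemes whose base change to $k^s$ we have just analyzed. Whether a morphism of finite-type $k$-schemes is an isomorphism over a given open subset, and the cardinality of its geometric fibers over a given closed point, are both insensitive to the faithfully flat base change $\Spec k^s \to \Spec k$. Concretely: the locus where $\Bl_{\oU \cap P^*}\oU \to \oU$ fails to be an isomorphism is a closed subscheme $W \subseteq \oU$; over $k^s$ this locus is $(\oU \cap \cF \cap P^*)_{k^s}$, which is finite, so $W$ itself is finite over $k$; and the number of geometric points in the fiber over each point of $W$ can be computed after base change to $k^s$, giving $2$. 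Finally, $\oU \cap \cF \cap P^*$ is a closed subscheme of $\oU$ defined over $k$ whose base change to $k^s$ is $W_{k^s}$, so $W = \oU \cap \cF \cap P^*$ as subschemes of $\oU$. (Alternatively, one can simply run the proof of Lemma \ref{lemm:graphpsiz} with $z$ a geometric rather than rational point, observing that parts (i) and (ii) of that lemma never use the rationality of $z$ in an essential way beyond the need for $\psi_z$ to be defined, which requires only a geometric point.)

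The only mild subtlety — and the one point that deserves a sentence of care rather than being truly an obstacle — is the identification of the non-isomorphism locus as a $k$-subscheme. One must note that $\oU$, $P^*$, and $\cF$ are each defined over $k$ (they are the components of $F(Y)$, or closures of natural loci therein, all of which are $\Gal(k^s/k)$-stable by construction in Proposition \ref{prop:fanothree}), hence so are the intersections $\oU \cap P^*$ and $\oU \cap \cF \cap P^*$. Granting that, the blowup $\Bl_{\oU \cap P^*}\oU$ is a canonically defined $k$-scheme, and everything else is routine descent of a scheme-theoretic statement along $k \hookrightarrow k^s$.
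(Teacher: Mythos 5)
Your proposal is correct and is essentially the paper's own argument: the paper gives no formal proof, merely prefacing the corollary with ``Even if $Z$ has no $k$-point, we deduce the following,'' i.e.\ it implicitly invokes Lemma~\ref{lemm:graphpsiz} over $k^s$ and descends, exactly as you spell out. Your write-up just makes explicit the compatibility of blowups with the field extension and the descent of the isomorphism locus and fiber cardinalities, which the paper leaves to the reader.
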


Over $k$, $\Bl_{\oU\cap P^*}\oU$ contains a collection of skew lines whose Galois orbits correspond to the Galois orbits of points in $Z$, and the collection of lines on $\Bl_{\oU\cap P^*}\oU$ can be blown down to obtain a surface which is geometrically isomorphic to $\Pic^0_C$.

\begin{defi}\label{defi:t}
Let $T$ be the surface obtained by contracting each line in $\Bl_{\oU\cap P^*}\oU$ lying over $z^*\subset\oU$ for $z\in Z$.
\end{defi}

By Lemma~\ref{lemm:graphpsiz}, when $C$ is smooth, $T$ is geometrically isomorphic to $\Pic^0_C$. Over $k$, we will eventually show that $T$ is a torsor of $\Pic^0_C$.

Finally, we work toward analyzing the intersection $\oU\cap\cF$.

\begin{lemm}\label{lemm:ubarsingf}
The intersection $\oU\cap\mathrm{Sing}(\cF)$ is zero-dimensional.
\end{lemm}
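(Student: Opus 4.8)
The plan is to pass to a separable closure, where the statement is geometric, to describe the one-dimensional part of $\mathrm{Sing}(\cF)$, and to rule it out of $\oU$ by means of the birational parametrization $\psi_z$ of Theorem~\ref{theo:fanothree}. If $C$ is smooth then $\cF$ is smooth, as recorded before Lemma~\ref{lemm:intpairing}, so there is nothing to prove; I would therefore assume $C$ is singular.

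First I would locate the one-dimensional part of $\mathrm{Sing}(\cF)$. Since $p\colon\cF\to C$ is a conic bundle with irreducible generic fibre and isolated degenerate fibres, and any reducibility of which comes from reducibility of $C$ (with distinct components of $\cF$ meeting over the finite set where components of $C$ meet), $p$ is smooth over the complement of $\mathrm{Sing}(C)$ together with finitely many further points. Hence any one-dimensional component of $\mathrm{Sing}(\cF)$ is a whole fibre $p^{-1}(c_0)$ with $c_0\in\mathrm{Sing}(C)$. Such a $c_0$ lies over a non-reduced point $t_0$ of the sextic discriminant $\Delta$, which by the analysis in Lemma~\ref{lemm:singthreefold} forces $\cov Y$, and hence $Y$, to have a singular point $v_0$ lying off $P$; concretely, $v_0$ is the vertex of the quadric cone $q^{-1}(t_0)$, and $p^{-1}(c_0)$ is, set-theoretically, the pencil of lines through $v_0$ contained in $q^{-1}(t_0)$.

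It then suffices to show $p^{-1}(c_0)\not\subseteq\oU$. Suppose not, fix a point $z\in Z$, rational over the separable closure, and let $\Gamma$ be the graph of $\psi_z\colon\Hilb^2C\dashrightarrow\oU$. For $L\in p^{-1}(c_0)\subseteq\oU$ pick $(x,L)\in\Gamma$ lying over $L$. If $x$ is one of the finitely many indeterminacy points of $\psi_z$, then by Lemma~\ref{lemm:graphpsiz} $L$ lies in a pencil $w^*\subseteq P^*$; but a ruling line of the cone $q^{-1}(t_0)$ is not contained in $P$, since the conic $q^{-1}(t_0)\cap P$ contains no line, so this does not occur, and therefore $L=\psi_z(x)$ for a pair $x=\{c,d\}$. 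Now $\psi_z(\{c,d\})$ meets $P$ only if it is contained in $P$ or passes through $z$: if the plane spanned by $\tau_z(c)$ and $\tau_z(d)$ meets $P$ in a line, that line lies in $Y$ and hence equals $\psi_z(\{c,d\})$ itself, while otherwise that plane meets $P$ only at $z$, so $\psi_z(\{c,d\})$ can meet $P$ only at $z$. Since $L\in\cF$ meets $P$ and is not contained in $P$, it must pass through $z$. Thus every line of the pencil $p^{-1}(c_0)$ passes through $z$; but every such line also passes through $v_0$, and $z\neq v_0$ because $z\in P$ and $v_0\notin P$, so only the single line $\overline{zv_0}$ can belong to the pencil — contradicting that $p^{-1}(c_0)$ is one-dimensional. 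Hence $\oU\cap\mathrm{Sing}(\cF)$ contains no curve.

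The step I expect to be the main obstacle is the analysis of which lines $\psi_z(\{c,d\})$ meet $P$, and more generally a precise description of the behaviour of $\psi_z$ and of the graph $\Gamma$ along the boundary of $\oU$ — in particular adapting $\psi_z$ and Lemma~\ref{lemm:graphpsiz} to the cases where $Z$ is non-reduced. A smaller point to pin down is the claim that a singular point of $C$ forces a singular point of $Y$ off $P$, which rests on the description of $\Sing(\cov Y)$ in terms of $\Delta$ from the proof of Lemma~\ref{lemm:singthreefold}.
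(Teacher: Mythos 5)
Your overall strategy is essentially the paper's: reduce to the fibres of $p$ over singular points of $C$, which are pencils of lines through the vertex $v_0\notin P$ of a quadric cone, and use the parametrization $\psi_z$ together with Lemma~\ref{lemm:graphpsiz} to show such a pencil cannot lie in $\oU$. (The paper phrases this as: a generic line of the cone fails to lie in the image of $\psi_z$; you argue instead that the cone lines lying in the image would all pass through $z$. These are two packagings of the same idea, both resting on the cone vertex lying off $P$.)

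However, the key step is not correct as stated. The dichotomy ``$\psi_z(\{c,d\})$ meets $P$ only if it is contained in $P$ or passes through $z$'' fails for pairs $\{c,d\}$ in which exactly one of the two lines, say $\tau_z(c)$, is contained in $P$. Such pairs are \emph{not} indeterminacy points of $\psi_z$ (only pairs with both lines in $P$ are, per Lemma~\ref{lemm:graphpsiz}), and for them the line $P'\cap P$ is $\tau_z(c)$ itself rather than the residual line, so $\psi_z(\{c,d\})$ meets $P$ at a point of $\tau_z(c)$ that need not be $z$. Such lines genuinely occur: by Lemma~\ref{lemm:ubarf} the curves $C_w\subset\oU\cap\cF$ for $w\in Z$ with $w\neq z$ consist of lines meeting $P$ at $w\neq z$ and not contained in $P$, and all but finitely many of these lie in the image of $\psi_z$ --- a direct counterexample to your dichotomy; taken literally, your argument would also ``prove'' that $\oU\cap\cF$ is contained in $C_z$ up to a finite set. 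The gap is repairable: there are only finitely many $c$ with $\tau_z(c)\subset P$ (Lemma~\ref{lemm:fcapp}); for each such $c$ every line $\psi_z(\{c,d\})$ meets the fixed line $\tau_z(c)\subset P$; and at most two lines of the cone pencil meet a fixed line of $P$, since each cone line meets $P$ in a single point of the conic $q^{-1}(t_0)\cap P$ and $\tau_z(c)$ meets that conic in a scheme of length $2$. Hence the exceptional pairs contribute only finitely many lines of the pencil, and your closing contradiction --- infinitely many distinct lines through both $z$ and $v_0$ --- survives. You need to add this case analysis for the proof to be complete.
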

\begin{proof}
We may work over $k^s$ and fix a point $z\in Z$. If $C$ is smooth, then so is $\cF$, and there is nothing to prove. So, assume $C$ is singular. Then $\mathrm{Sing}(\cF)$ consists of a conic over each singular point of $C$. To show that such a conic does not belong to $\oU$, we first show that is contains a dense open set not belonging to the image of the rational map $\psi_z$.

Let $c$ be a singular point of $C$, let $Q=q^{-1}(c)$ be the  quadric lying over $c$, and let $L\subset Q$ be a line not intersecting $Z$. Such a line $L$ is generic in $Q$. The cone point $y\in Q$ does not lie in $P$ or else $Y$ would have a fifth node along $P$. 

The plane $P'$ spanned by $L$ and $z$ intersects $Y$ in two other lines: $\tau_z(c)$ and another line $L'$ containing $z$. Since $\tau_z(c)$ and $L$ intersect at $y\not\in P$, the line $L'$ joins $z$ and $L\cap P$. Hence $L'\subset P$. As $L\cap P\not\in Z$, and $\deg(L'\cap Q\cap P)=2$, the line $L'$ does not contain a length-two subscheme of $Z$ so does not belong to the image of $\tau_z$. It follows that $L$ is not in the image of $\psi_z$. 

Now, we apply Lemma~\ref{lemm:graphpsiz}, which shows that the only curves in $\oU$ without dense open sets lying in the image of $\psi_z$ are the pencils $w^*$ for $z\neq w\in Z$. Since $\mathrm{Sing}(\cF)\not\subset P^*$ and $\mathrm{Sing}(\cF)\cap\mathrm{im}(\psi_z)$ is finite, we arrive at the desired result.
\end{proof}

Let $C_z\subset\cF$ denote the image of $\tau_z$.

\begin{lemm}\label{lemm:ubarf}
$\oU\cap\cF=\cup_{z\in Z}C_z$.
\end{lemm}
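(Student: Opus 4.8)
The plan is to prove the two inclusions over a separable closure (both $\oU\cap\cF$ and $\bigcup_{z\in Z}C_z$ are Galois-stable, so geometric equality suffices), fixing a rational point $z\in Z$ so that the rational map $\psi_z\colon\Hilb^2C\dashrightarrow\oU$ of Theorem~\ref{theo:fanothree} and Lemma~\ref{lemm:graphpsiz} is available; as in the earlier lemmas I would assume $Z$ reduced and treat the nonreduced cases as routine adaptations. The heart of the matter is to describe $\psi_z^{-1}(\cF)$: for a pair $\{c,d\}$ at which $\psi_z$ is defined, $\psi_z(\{c,d\})$ is the residual line $L$ of the plane cubic $P'\cap Y$, where $P'=\langle\tau_z(c),\tau_z(d)\rangle$ is a plane through $z$, and one wants to know when $L$ meets $P$.

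The position of $L$ is controlled by $\Sing(Y)\cap P'$. If $P'\cap P=\{z\}$, then $L\cap P\subseteq\{z\}$, so $L$ meets $P$ exactly when $z\in L$; in that case the coplanar lines $\tau_z(c),\tau_z(d),L\subseteq Y$ all pass through $z$, so $L=\tau_z(c')$ with $c,d,c'$ collinear in the $\bP^3$ parametrizing lines of $Y$ through $z$, and since $C_z$ lives in that $\bP^3$ as a degree-$5$ genus-$2$ curve these pairs $\{c,d\}$ sweep out a curve in $\Hilb^2C$---parametrized by the residual trisecant point $c'$---that $\psi_z$ maps onto a dense subset of $C_z$. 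If instead $P'\cap P$ is a line $\ell$, then $L$ meets $P$ automatically, and (outside the finitely many pairs with $L=\ell\subseteq P$) the line $\ell$ is a component of $P'\cap Y$ lying in $P$, hence $\ell=\tau_z(c)$ or $\tau_z(d)$, say $\ell=\tau_z(c)\subseteq P$; the only lines of $C_z$ contained in $P$ are the three lines $\overline{zw}$ ($w\in Z\setminus\{z\}$), so $\ell=\overline{zw}$, and since $Y$---hence $P'\cap Y$---is singular at $w$ while $\tau_z(d)$ does not pass through $w$, the residual line $L$ must pass through $w$. So $\psi_z^{-1}(\cF)$ is a union of curves in $\Hilb^2C$, and as $w$ runs over $Z\setminus\{z\}$ they map under $\psi_z$ onto dense subsets of $C_z$ and of the $C_w$, accounting for every $C_v$ with $v\in Z$.

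Both inclusions then follow. As $\overline{\operatorname{im}\psi_z}=\oU$, the above gives $C_v\subseteq\oU$ for each $v$, and $C_v\subseteq\cF$ because $\tau_v$ is a section of $p\colon\cF\to C$, so $\bigcup_{v\in Z}C_v\subseteq\oU\cap\cF$. Conversely, let $[L]\in\oU\cap\cF$. If $L\subseteq P$ then $[L]\in\oU\cap\cF\cap P^*$, which by Lemma~\ref{lemm:graphpsiz}(i) is the finite set of pairwise intersections of the lines $z^*$ ($z\in Z$), each of which is a line $\overline{zw}\in C_z$. If $L\not\subseteq P$, then by Lemma~\ref{lemm:graphpsiz}(i) and its proof $\pi_2\colon\Gamma\to\oU$ is surjective with $\oU\setminus\operatorname{im}\psi_z\subseteq P^*$, so $[L]=\psi_z(\{c,d\})$ for some pair; since $[L]\in\cF$ the line $L$ meets $P$, so the analysis above places $[L]$ in some $C_v$. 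In all cases $[L]\in\bigcup_{v\in Z}C_v$.

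The step I expect to be the real obstacle is the second inclusion: a priori $\oU\cap\cF$ could pick up extra points, or an extra curve, from lines of $Y$ through a node lying off $P$ or through a point of $P\setminus Z$, and what rules these out is precisely the observation that the plane $P'=\langle\tau_z(c),\tau_z(d)\rangle$ already contains the node $z$, so its residual line is forced through the remaining points of $\Sing(Y)\cap P'$. Hence I would organize the argument around the position of $P'\cap P$, leaning on Lemma~\ref{lemm:graphpsiz} to guarantee that $\psi_z$ reaches all of $\oU$ away from $P^*$; the remaining work is routine bookkeeping when $Z$ is nonreduced or $C$ is reducible.
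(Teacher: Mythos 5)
Your route is genuinely different from the paper's. The paper never touches the residual-line construction here: for the inclusion $\bigcup_z C_z\subseteq\oU\cap\cF$ it observes that $C_z\subseteq\cF\cap\Sing(F(Y))$ (every line in $C_z$ passes through the singular point $z$ of $Y$) and that a genus-$2$ curve cannot land in the finite set $\cF\cap P^*$ or in the rational curves of $\Sing(\cF)$, so it must lie in $\oU\cap\cF$; for the reverse inclusion it notes $\oU\cap\cF\subseteq\bigcup_z C_z\cup\Sing(\cF)$ and then kills the possible stray points in $\oU\cap\Sing(\cF)$ by a commutative-algebra argument ($F(Y)$ Gorenstein, $P^*$ Cohen--Macaulay, hence $\cF\cup\oU$ is Cohen--Macaulay and $\oU\cap\cF$ is purely $1$-dimensional). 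Your substitute for that last step --- the observation that any $[L]\in\oU\setminus P^*$ is $\psi_z(\xi)$ for some $\xi$, so the plane $P'$ already contains the node $z$ and the residual line is forced through the remaining singular points of $Y$ in $P'$ --- is a correct and attractive piece of geometry, and it does the work of both Lemma~\ref{lemm:ubarsingf} and the Cohen--Macaulay argument at once.

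Two steps need repair, though. First, in the forward inclusion your only argument that $C_z$ itself lies in $\oU$ is the assertion that the trisecant pairs $\{c,d\}$ sweep out a curve mapping onto a dense subset of $C_z$; ``degree-$5$ genus-$2$ in $\bP^3$'' does not by itself produce trisecants through a general point of the curve. (It is true here because $C_z$ lies on the projectivized tangent cone of $Y$ at $z$, a quadric on which the trisecants form the degree-$3$ ruling, but you must say so --- or, more simply, get $C_z\subseteq\oU$ by running your Case~2 analysis with $\psi_w$ for some $w\neq z$, which avoids trisecants entirely.) Second, in the reverse inclusion your Case~2 argument breaks down at the non-reduced points of $\Hilb^2C$: by Remark~\ref{rema:hilb} the domain of $\psi_z$ contains the doubled points $2c$, and when $\tau_z(c)=\overline{zw}$ the plane section is $P'\cap Y=2\overline{zw}+L$, so the double line already accounts for the singularity of $P'\cap Y$ at $w$ and nothing forces $L$ through $w$. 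The conclusion still holds --- $\psi_z(2c)$ is the limit of $\psi_z(\{c,d\})$ as $d\to c$, and those all pass through $w$, so the limit lies in the closed set $C_w\cup w^*$ --- but you must add this (or some other) argument, since a priori such an $[L]$ could meet $P$ at a point of $\overline{zw}$ outside $Z$ while still lying in $\oU\cap\cF$ by passing through a cone point of a singular quadric off $P$. With those two patches, and the same caveats you already acknowledge about reducible $C$ and non-reduced $Z$, the argument goes through.
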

\begin{proof}
First, we show $C_z\subset\oU\cap\cF$ for each  point $z\in Z$. Note $C_z\subset\mathrm{Sing}(F(Y))$ since each line in $C_z$ meets $Z$. Hence 
\[
C_z\subset\cF\cap\text{Sing}(F(Y))=(\oU\cap\cF)\cup(\cF\cap P^*)\cup\text{Sing}(\cF).
\]
But $\cF\cap P^*$ is finite by Lemma~\ref{lemm:fcapp}, $\mathrm{Sing}(\cF)$ consists of finitely many conics, and the genus of $C_z$ is $2$, so there is no embedding \[C_z\to(\cF\cap P^*)\cup\mathrm{Sing}(\cF).\] Therefore $C_z\subset\oU\cap\cF$.

For the reverse inclusion, note that if $L\in\oU\cap\cF$, then $L$ is a singular point of $F(Y)$ so passes through a singular point of $Y$. The lines in $\cF$ passing through singular points of $Y$ are parametrized by the curves $C_z$ for $z\in Z$ and by $\mathrm{Sing(\cF)}$, so
\[
\oU\cap\cF\subset\bigcup_{z\in Z}C_z\cup\mathrm{Sing}(\cF).
\]
By Lemma~\ref{lemm:ubarsingf}, $\oU\cap\mathrm{Sing}(\cF)$ is zero-dimensional, so it remains to show that $\oU\cap\cF$ is equidimensional. Indeed, $F(Y)$ is Gorenstein, and $P^*$ is Cohen-Macaulay, so 
\[
\overline{F(Y)\setminus P^*}=\cF\cup\oU
\]
is Cohen-Macaulay by \cite[Thm. 21.23]{eisenbud}. By \cite[Cor. 18.11]{eisenbud}, $\cF\cap\oU$ is purely 1-dimensional.
\end{proof}

\begin{prop}\label{coro:ubar} We have the following description of the boundary of $\oU$:
\[
\oU=\cU\cup\bigcup_{z\in Z}(z^*\cup C_z).
\]
\end{prop}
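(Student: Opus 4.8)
The plan is to write $\oU$ as the union of the open locus $\cU$ and its boundary $\oU\setminus\cU$, and to identify the boundary using the intersection computations already established. Since $\cU\subseteq\oU$ is open and dense, at the level of underlying sets (equivalently, reduced subschemes, which is the level at which the statement is meant) one has $\oU=\cU\cup(\oU\setminus\cU)$, so it suffices to show $\oU\setminus\cU=\bigcup_{z\in Z}(z^*\cup C_z)$.

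First I would classify the lines on $Y$ meeting $P$, that is, the geometric points of $F(Y)\setminus\cU$. Such a line $L$ either is contained in $P$, whence $[L]\in P^*$, or meets $P$ in a single point, whence $L$ lies in a fiber of $q$ and $[L]\in\cF$ under the embedding $\cF\hookrightarrow F(Y)$ fixed in Section~\ref{sect:threefold}. Conversely, every line parametrized by $P^*$ meets $P$, and every line parametrized by $\cF$ meets $P$ (meeting it in one point, or lying in $P$ for the finitely many points of $\cF\cap P^*$ provided by Lemma~\ref{lemm:fcapp}). Hence $F(Y)\setminus\cU=P^*\cup\cF$ as sets.

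Intersecting with $\oU$ gives $\oU\setminus\cU=(\oU\cap P^*)\cup(\oU\cap\cF)$. Lemma~\ref{lemm:ubarp} identifies $\oU\cap P^*=\bigcup_{z\in Z}z^*$ and Lemma~\ref{lemm:ubarf} identifies $\oU\cap\cF=\bigcup_{z\in Z}C_z$, so
\[
\oU=\cU\cup(\oU\setminus\cU)=\cU\cup\bigcup_{z\in Z}z^*\cup\bigcup_{z\in Z}C_z=\cU\cup\bigcup_{z\in Z}(z^*\cup C_z),
\]
as claimed.

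I do not expect a genuine obstacle: the real work is done in Lemmas~\ref{lemm:fcapp}, \ref{lemm:ubarp}, and~\ref{lemm:ubarf}, and what remains is the elementary trichotomy for how a line sits relative to $P$, together with the observation that the zero-dimensional overlap $\cF\cap P^*$ does not disturb the set-theoretic identity $F(Y)\setminus\cU=P^*\cup\cF$. The only point needing a moment's care is that $\cF$ is defined as a closure, so one must check it contains no positive-dimensional family of lines inside $P$; this is precisely what Lemma~\ref{lemm:fcapp} rules out.
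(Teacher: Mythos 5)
Your argument is correct and matches the paper's proof, which likewise deduces the statement directly from Lemmas~\ref{lemm:ubarp} and~\ref{lemm:ubarf} via the set-theoretic decomposition $\oU\setminus\cU=(\oU\cap P^*)\cup(\oU\cap\cF)$. You have simply spelled out the trichotomy for how a line meets $P$ that the paper leaves implicit.
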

\begin{proof}
This follows directly from Lemmas~\ref{lemm:ubarp} and \ref{lemm:ubarf}.
\end{proof}

\begin{coro}\label{coro:tpoints}
We can identify the surface $T$ from Definition~\ref{defi:t} with
\[
T=\left(\oU\setminus\bigcup_{z\in Z}z^*\right)\cup Z.
\]
\end{coro}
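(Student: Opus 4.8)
The plan is to track the two birational modifications relating $\oU$ to $T$. Write $\widetilde{\oU}=\Bl_{\oU\cap P^*}\oU$, let $b\colon\widetilde{\oU}\to\oU$ be the blowup, and let $c\colon\widetilde{\oU}\to T$ be the contraction of Definition~\ref{defi:t}; the goal is to show that each of $b$ and $c$ is an isomorphism over the open locus $\oU\setminus\bigcup_{z\in Z}z^*$, while together they replace the curve $\bigcup_{z}z^*$ by the finite scheme $Z$. First I would record the shape of $b$: by Lemma~\ref{lemm:ubarp} its center is $\oU\cap P^*=\bigcup_{z\in Z}z^*$, and by Corollary~\ref{lemm:blubar} the map $b$ is an isomorphism away from the finite set $N=\oU\cap\cF\cap P^*$ and is two-to-one over $N$. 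Since $N\subseteq\oU\cap P^*=\bigcup_{z}z^*$, it follows at once that $b$ is an isomorphism over all of $\oU\setminus\bigcup_{z}z^*$.

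Next I would identify $b^{-1}\bigl(\bigcup_z z^*\bigr)$ with the union of the skew lines $\ell_z$ from Definition~\ref{defi:t}. Each $\ell_z$ lies over $z^*$, so $b$ carries $\ell_z$ onto the whole line $z^*$; because $b$ is an isomorphism over the dense open $z^*\setminus N$, this map has degree one, so $\ell_z$ is the strict transform of $z^*$ and contains all of $b^{-1}(z^*\setminus N)$. The points of $N$ are the six pairwise intersections of the lines $z^*$, each lying on exactly two of them, and over such a point $n\in z^*\cap w^*$ the two points of the fiber $b^{-1}(n)$ are distributed one onto $\ell_z$ and one onto $\ell_w$. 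Hence the $\ell_z$ are pairwise disjoint (recovering the assertion made before Definition~\ref{defi:t}), their union exhausts $b^{-1}\bigl(\bigcup_z z^*\bigr)$, and $b$ restricts to an isomorphism $\widetilde{\oU}\setminus\bigsqcup_z\ell_z\xrightarrow{\ \sim\ }\oU\setminus\bigcup_z z^*$. I expect this bookkeeping — in particular, checking that the two-to-one fibers of $b$ over $N$ split correctly so that the $\ell_z$ are genuinely skew and together cover the whole exceptional locus — to be the one point really needing care, since it is exactly what forces $\oU\setminus\bigcup_z z^*$ to survive unchanged through both modifications.

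Finally I would push this through $c$. By construction $c$ is an isomorphism away from $\bigsqcup_z\ell_z$ and contracts each of these pairwise disjoint lines to a single point $t_z\in T$, with the $t_z$ distinct, so $c$ restricts to an isomorphism $\widetilde{\oU}\setminus\bigsqcup_z\ell_z\xrightarrow{\ \sim\ }T\setminus\{t_z:z\in Z\}$. Composing with the isomorphism of the previous paragraph gives $T\setminus\{t_z:z\in Z\}\cong\oU\setminus\bigcup_z z^*$, which by Proposition~\ref{coro:ubar} is the open subscheme $\cU\cup\bigcup_z\bigl(C_z\setminus\bigcup_w w^*\bigr)$ of $\oU$. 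It then remains to identify the complementary finite subscheme $\{t_z:z\in Z\}\subset T$ with $Z$: the chain $z\mapsto z^*\mapsto\ell_z\mapsto t_z$ is Galois-equivariant, so, matching Galois orbits of the lines $\ell_z$ with those of the points of $Z$ as in the discussion preceding Definition~\ref{defi:t}, it realizes $Z$ as the closed subscheme $\{t_z:z\in Z\}$ of $T$. Combining the two identifications yields $T=\bigl(\oU\setminus\bigcup_{z}z^*\bigr)\cup Z$. When $Z$ is non-reduced, one first replaces the configuration of lines $\ell_z$ by its appropriate degeneration, exactly as indicated in the proof of Lemma~\ref{lemm:graphpsiz}, and the argument is otherwise unchanged.
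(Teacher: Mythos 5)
Your argument is correct and is exactly the derivation the paper intends: the corollary is stated there without proof, as an immediate consequence of Lemma~\ref{lemm:graphpsiz}, Corollary~\ref{lemm:blubar}, Definition~\ref{defi:t}, and Proposition~\ref{coro:ubar}, and your two-step bookkeeping (blow up, then contract, with the open locus $\oU\setminus\bigcup_z z^*$ untouched throughout) is the intended one. The single step you flag as delicate --- that the two points of $b^{-1}(n)$ over $n\in z^*\cap w^*$ are distributed one onto $\ell_z$ and one onto $\ell_w$, so the $\ell_z$ are pairwise disjoint and exhaust the exceptional locus --- is exactly what Lemma~\ref{lemm:graphpsiz} supplies: under the identification of $\Bl_{\oU\cap P^*}\oU$ with a blowup of $\Hilb^2C$, the curve over $z^*$ is the (strict transform of the) conjugate-pair curve $E$ and the curves over the remaining $w^*$ are the mutually disjoint exceptional curves $E_i$, and the injectivity of $\psi_z$ on its domain forces each of these to meet $b^{-1}(n)$ in exactly one point.
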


\subsection{A collection of rational involutions}

Recall that when $C$ is smooth, $T$ is geometrically an abelian surface. In the next subsection, we equip $T$ with the structure of a $\Pic^0_C$-torsor; to define an action by $\mathrm{Div}(C)$ on $T$, we use a collection of involutions on $T$ defined in this section. First, we define a collection of rational involutions on $\oU$.

\begin{defi}\label{defi:ratinv}
For $c\in C$, define a rational map $i_c:\oU\dashrightarrow F(Y)$ as follows: $c$ specifies a ruling in a quadric surface in $Y$, and a line $L\subset Y\setminus P$ meets a unique line $M$ in this ruling. The plane spanned by $L$ and $M$ meets $Y$ in a third line, $i_c(L)$.
\end{defi}

We have defined $i_c$ on $\cU$, though the domain turns out to be larger, as explained shortly.

\begin{lemm}\label{lemm:psi}
For $c,d\in C$ and $z\in Z$, \[i_c(\tau_z(d))=i_d(\tau_z(c))\] whenever both sides of the equation are defined.
\end{lemm}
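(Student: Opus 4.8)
The plan is to reduce the claimed symmetry to the symmetry already built into the map $\psi_z$ of Proposition~\ref{prop:fanothree}. Concretely, I would show that
\[
i_c(\tau_z(d))=\psi_z(\{c,d\}),
\]
i.e.\ that $i_c(\tau_z(d))$ is the residual line of $Y$ in the plane spanned by $\tau_z(c)$ and $\tau_z(d)$. Since the right-hand side manifestly does not depend on the order of $c$ and $d$, it also equals $\psi_z(\{d,c\})=i_d(\tau_z(c))$, which is the assertion of the lemma.

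Fix $c,d\in C$ and $z\in Z$ with $i_c(\tau_z(d))$ defined, put $L=\tau_z(d)$, and let $Q=q^{-1}(g(c))$ be the quadric surface in $Y$ one of whose rulings is selected by $c$. First I would record two consequences of $i_c(\tau_z(d))$ being defined: $g(c)\neq g(d)$, and $L\not\subset P$. Indeed, if $g(c)=g(d)$ then $L$ lies in $Q$ itself and is met either by no line of the chosen ruling other than $L$ (when $d=c$) or by every line of that ruling (when $d=\bar c$), so there is no unique such line; and if $L\subset P$ then $L$ meets $Q$ along the conic $Q\cap P$, hence meets two lines of each ruling, again precluding a unique choice. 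The heart of the argument is then the claim that $L\cap Q=\{z\}$. By Lemma~\ref{lemm:singthreefold} the point $z$ lies on every quadric surface in $Y$, so $z\in L\cap Q$. If $L$ met $Q$ at a second point, $L$ would lie in the $3$-plane $\langle Q\rangle$ spanned by $Q$; but $L\subset\langle q^{-1}(g(d))\rangle$ as well, and the two $3$-planes $\langle Q\rangle$ and $\langle q^{-1}(g(d))\rangle$ both contain $P$ and are distinct because $g(c)\neq g(d)$, so they meet exactly in $P$, forcing $L\subset P$ — a contradiction. Hence the unique line $M$ of the chosen ruling of $Q$ meeting $L$ is the line of that ruling through $z$, which is precisely $\tau_z(c)$ by the definition of $\tau_z$ (the section of $p$ corresponding to the constant section $z$ of $q$). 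For a degenerate fiber $Q$ one argues identically, with $\tau_z(c)$ the ruling line joining $z$ to the vertex of the cone; one needs only that the vertex does not lie on $P$, which holds for general $Y$ (cf.\ the proof of Lemma~\ref{lemm:ubarsingf}).

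It now follows from Definition~\ref{defi:ratinv} that $i_c(\tau_z(d))$ is the residual line of $Y$ in the plane $P'=\langle\tau_z(d),\tau_z(c)\rangle$. This is a genuine plane: $\tau_z(c)\neq\tau_z(d)$, since a common line would lie in $\langle Q\rangle\cap\langle q^{-1}(g(d))\rangle=P$, contradicting $L\not\subset P$; and $P'\neq P$ because $P'\supset L\not\subset P$, so $P'\not\subset Y$ and $P'\cap Y$ is a plane cubic curve containing the two lines $\tau_z(c),\tau_z(d)$, leaving a well-defined residual line. This identifies $i_c(\tau_z(d))$ with $\psi_z(\{c,d\})$ as in Remark~\ref{rema:hilb}, and interchanging $c$ and $d$ gives the lemma. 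The only point requiring genuine geometric care is the claim $L\cap Q=\{z\}$, together with the bookkeeping of which configurations are excluded by the hypothesis that both sides be defined; everything else is formal once $M$ has been identified with $\tau_z(c)$.
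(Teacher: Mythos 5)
Your proof is correct and takes a somewhat different route from the paper's. The paper argues by density: for fixed $z$ it regards $(c,d)\mapsto i_c(\tau_z(d))$ and $(c,d)\mapsto i_d(\tau_z(c))$ as rational maps $C\times C\dashrightarrow\oU$, checks that they agree on the open set where $\bar c\neq d$ and $\tau_z(c),\tau_z(d)\not\subset P$ (there both are ``by definition'' the residual line of $\langle\tau_z(c),\tau_z(d)\rangle\cap Y$), and concludes they agree wherever both are defined. You instead prove the identity pointwise by identifying each side with $\psi_z(\{c,d\})$, and in doing so you supply the geometric step the paper leaves implicit: that the unique line of the $c$-ruling meeting $\tau_z(d)$ is $\tau_z(c)$, which you deduce from $\tau_z(d)\cap q^{-1}(g(c))=\{z\}$ via the observation that the spans of two distinct fibers of $q$ meet exactly in $P$. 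That argument is sound, including the cone case, and what it buys is explicitness at the one point where the paper's computation is really being asserted rather than proved; what the paper's version buys is that the density formalism absorbs all degenerate configurations at once.

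The one respect in which your proof covers less than the statement, under the reading of ``defined'' that the paper eventually adopts, is the boundary cases you discard at the outset. You treat $i_c(\tau_z(d))$ as undefined whenever the recipe of Definition~\ref{defi:ratinv} fails to single out a line, but $i_c$ is a \emph{rational} map, and Lemma~\ref{lemm:domainic} later shows its domain includes $\tau_z(c)$ (always) and $\tau_z(\bar c)$ (when $\tau_z(c)\not\subset P$) --- exactly the points you exclude. At those points the identity is still asserted and still true: trivially for $d=c$, and because both sides equal $\psi_z(\{c,\bar c\})=[T_zQ\cap P]$ for $d=\bar c$. Your argument as written does not reach them, whereas the density argument does automatically. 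A one-sentence addendum handling $d\in\{c,\bar c\}$ via the extended (normal-direction) description of Remark~\ref{rema:hilb} would close this.
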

\begin{proof}
It suffices to check that the two rational maps $C\times C\to\oU$ defined by sending $(c,d)$ to $i_c(\tau_z(d))$ and to $i_d(\tau_z(c))$ agree on a dense open set. For the open set, take
\[
V=\{(c,d)\;|\;\bar c\neq d~\mathrm{and}~\tau_z(c),\tau_z(d)\not\subset P \}\subset C\times C.
\]
For $(c,d)\in V$, the lines $\tau_z(c)$ and $\tau_z(d)$ meet at $z$, and the third line in the plane spanned by $\tau_z(c)$ and $\tau_z(d))$ is, by definition, both $i_c(\tau_z(d))$ and $i_d(\tau_z(c))$.
\end{proof}

\begin{rema}\label{rema:psi}
Lemma~\ref{lemm:psi} affords another description of the birational maps $\psi_z\colon\Hilb^2C\dashrightarrow\oU$ from the proof of Theorem~\ref{theo:fanothree}: \[\psi_z(\{c,d\})=i_c(\tau_z(d)).\]
\end{rema}

\begin{lemm}
Each $i_c$ is a rational involution whose image is contained in $\oU$.
\end{lemm}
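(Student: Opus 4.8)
Since $i_c$ is defined a priori on $\cU$, which is dense open in $\oU$ by Proposition~\ref{coro:ubar}, we may view it as a rational map $\oU\dashrightarrow F(Y)$; the claim then has two parts, that its image lies in $\oU$ and that $i_c\circ i_c=\mathrm{id}$. The plan is to follow the planar residuation defining $i_c$. Fix $c$ and let $Q=q^{-1}(g(c))$ be the quadric surface carrying the ruling $R_c$ determined by $c$. Since every $L\in\cU$ is a section of $q$, the intersection $L\cap Q$ is a single point $y\notin P$; write $M=M_c(L)\in R_c$ for the ruling line through $y$. For general $L$ we have $L\ne M$, because every line on a quadric surface of the pencil meets $P$, so no line of $R_c$ lies in $\cU$. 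Hence $P':=\langle L,M\rangle$ is a plane, and as the only plane in $Y$ is $P\ne P'$ we get $P'\not\subseteq Y$ and $P'\cap Y=L\cup M\cup N$ with $N=i_c(L)$; for general $L$ the three lines $L,M,N$ are distinct.

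The main task is to show $N\in\cU$. Since $L\cap P=\emptyset$, the join $\langle L,P\rangle$ is all of $\bP^4$, so $P'\cap P$ is a single point $w$; and since $M$ lies on a quadric surface of the pencil but not in $P$, it meets $P$, necessarily at $w$. Thus $(P'\cap Y)\cap P\subseteq\{w\}$, which already forces $N\not\subseteq P$, so $i_c(L)\notin P^*$. Suppose toward a contradiction that $N$ meets $P$. Then $w\in N$, so $w=M\cap N$, and the plane cubic $P'\cap Y=L\cup M\cup N$ is singular at $w$. For general $L$ one has $w\notin Z$: the ruling lines of $R_c$ meeting $P$ inside the finite set $Z$ are finite in number, and for each such line the lines of $\cU$ meeting it form a family of dimension at most one. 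Hence $Y$ is smooth at $w$, so $P'\subseteq T_wY$; but $T_wY$ is a hyperplane containing $P$, hence equals $\langle Q_{c''}\rangle$ for some $c''\in\bP^1$, and substituting into $Y=\{x_0Q_0+x_1Q_1=0\}$ gives $Y\cap\langle Q_{c''}\rangle=P\cup Q_{c''}$, so $L\subseteq P$ or $L\subseteq Q_{c''}$ — both impossible, since $L\cap P=\emptyset$ and $L$ is a section of $q$. Therefore $N\cap P=\emptyset$, so $i_c(L)=N\in\cU\subseteq\oU$; and since a line disjoint from $P$ lies on no quadric surface of the pencil, $N$ is again a section of $q$.

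For the involution, take general $L$, so that $N=i_c(L)$ is a section of $q$ in $\cU$. The distinct coplanar lines $N$ and $M$ meet at a point of $M\subseteq Q$, and as $N$ is a section this is the unique point of $N\cap Q$; hence the line of $R_c$ through $N\cap Q$ is $M$, that is, $M_c(N)=M$. So $i_c(N)$ is the line residual to $N\cup M$ inside $\langle N,M\rangle=P'$, namely $L$. Thus $i_c\circ i_c=\mathrm{id}$ as rational maps, and $i_c$ is a rational involution of $\oU$ with image in $\oU$.

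I expect the image statement to be the real content, and within it the step excluding the possibility that $N$ meets $P$ (which would land $i_c(L)$ in $\cF$ rather than $\cU$). The two structural inputs there are that $(P'\cap Y)\cap P$ is a single point and that each tangent hyperplane $T_wY$, for $w\in P$ a smooth point of $Y$, meets $Y$ in $P$ together with exactly one quadric surface of the pencil. The rest is routine: verifying that the exceptional loci ($L$ lying in $R_c$, or $M=N$, or $w\in Z$) are proper closed subsets of $\cU$, and handling the cases where $Q$ is a degenerate cone fiber, where the same arguments go through with minor adjustments.
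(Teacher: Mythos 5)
Your proof is correct, but it takes a genuinely different route from the paper's. The paper argues indirectly: it asserts $i_c^2=\mathrm{id}$ and then shows the image of $i_c$ \emph{contains} $\cU$ (by exhibiting, for intersecting $[L],[M]\in\cU$, the residual line $N$ as a ruling line, so that $[L]=i_c([M])$), concluding that the closure of the image must be $\oU$ itself; this leans on $\oU$ being an irreducible surface and on a dimension count. You instead compute directly where a general $[L]\in\cU$ goes: the residual line $N$ can only meet $P$ at the single point $w=P'\cap P=M\cap P$, and if it did, the plane cubic $P'\cap Y$ would be singular at $w$, forcing $P'\subset T_wY$ once $w\notin Z$; since $T_wY$ is then a hyperplane through $P$ with $Y\cap T_wY=P\cup Q_{c''}$, this would put the section $L$ inside $P$ or inside a fiber of $q$, a contradiction. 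This is more work but buys more: it is independent of the irreducibility of $\oU$, it proves the sharper statement that a general line of $\cU$ is sent into $\cU$ (consistent with case (i) of Lemma~\ref{lemm:jc}), and it turns the involution property into an actual verification ($M_c(N)=M$, so residuation returns $L$) rather than an assertion. Two small points to nail down: the claim that the lines of $\cU$ meeting a fixed $\tau_z(c)$ form a family of dimension at most one needs the observation that no component of $\cU$ lies in the Schubert divisor of lines meeting $\tau_z(c)$, which follows from the injectivity of $\varphi_z$ in Proposition~\ref{prop:fanothree} or from Lemma~\ref{lemm:intpairing}; and for a cone fiber $Q$ with vertex $v$, one should note that $v\notin P$ and $P'\not\subset T_vY=\langle Q\rangle$ (because the section $L$ is not contained in $\langle Q\rangle$), so the plane cubic is smooth at $v$, $N$ avoids the vertex, and $M_c(N)$ remains well defined there.
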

\begin{proof}
It is easy to see that $i_c^2=\mathrm{id}_{\oU}$. To show that the image of $i_c$ is contained in $\oU$, it is enough to show that the image contains $\cU$. 

Let $[L]\in\cU$ and take $[M]\in\cU$ be any line with $L\cap M\neq\emptyset$. The plane $P'$ spanned by $L$ and $M$ intersects $P$ at a single point $y$ and intersects $Y$ in the union of $L$, $M$, and a third line $N$. The point $y\in P$ must lie on $N$, so $[N]\in\cF$. Taking $c\in C$ to be the ruling $N$ belongs to, $[L]=i_c([M])$.
\end{proof}

\begin{lemm}\label{lemm:domainic}
For all $c$, the domain of $i_c$ includes the following:
\begin{enumerate}
    \item[(i)] the subscheme $\cU$;
    \item[(ii)] $\tau_z(d)$ for $z\in Z$ and $d\in C$ if $d\not\in\{c,\bar c\}$ and $\tau_z(d)\not\subset P$;
    \item[(iii)] $\tau_z(c)$.
\end{enumerate}
If also $\tau_z(c)\not\subset P$ for all $z$, then the domain of $i_c$ includes
\begin{enumerate}
    \item[(iv)] any line $\ell\in P^*\cap\oU$ not of the form $\tau_z(d)$;
    \item[(v)] $\tau_z(\bar c)$.
\end{enumerate}
\end{lemm}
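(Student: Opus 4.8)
The plan is to argue each claim by explicitly tracing through the geometric construction of $i_c$ in Definition~\ref{defi:ratinv} and identifying exactly when it breaks down, so that the failure locus can be pinned to the finitely many lines excluded in the statement. The construction of $i_c([L])$ requires three things: (a) the line $L$ meets a \emph{unique} line $M$ in the ruling $c$; (b) $L$ and $M$ are distinct, so that they span an honest plane $P'$; and (c) the residual intersection $P'\cap Y$ is a genuine third line rather than $P'$ degenerating, i.e. $P'\not\subset Y$ (which, since $Y$ contains only $P$, means $P'\neq P$). For (i), the subscheme $\cU$, all three conditions hold essentially by the proof that $i_c$ is well-defined on $\cU$ given above: a line disjoint from $P$ meets each ruling of each smooth fiber of $q$ in exactly one line, and $L\not\subset P'$ forces $P'\neq P$, so (c) holds. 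So (i) is immediate and serves as the base case.

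For (ii) and (iii) I would take $L=\tau_z(d)$ and study how $L$ meets the ruling $c$. The ruling $c$ lives in the fiber $Q_{g(c)}$ of $q$ over the point $g(c)\in\bP^1$; note that $\tau_z(c)$ is a line in the \emph{same} ruling $c$ (both $\tau_z(c)$ and the members of ruling $c$ are parametrized by $c\in C$). When $d\neq c$, the line $\tau_z(d)$ lies in a different fiber $Q_{g(d)}$ (or, if $g(d)=g(c)$ but $d\neq c$, i.e. $d=\bar c$, in the \emph{other} ruling of the same smooth fiber), so $\tau_z(d)$ meets $Q_{g(c)}$ in the single point of $Q_{g(c)}$ lying over $g(d)\cap g(c)$ in $\bP^1$ — hence meets exactly one line $M$ of ruling $c$, giving (a). The exclusion $d\neq\bar c$ in (ii) is precisely to avoid the degenerate case where $\tau_z(d)$ lies in $Q_{g(c)}$ itself and thus meets infinitely many lines of ruling $c$ (or coincides with one); I would check that $d=\bar c$ is the only such value. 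Condition (b), $M\neq\tau_z(d)$, follows because $M$ is in ruling $c$ while $\tau_z(d)$ is not (given $d\neq c,\bar c$). For (c), the hypothesis $\tau_z(d)\not\subset P$ guarantees the span $P'$ of $\tau_z(d)$ and $M$ is not $P$. Finally (iii), $L=\tau_z(c)$: here $L$ itself lies in ruling $c$, so it meets every other member of ruling $c$ at the cone point — except there is no cone point if the fiber is smooth, in which case $\tau_z(c)$ meets no other line of ruling $c$ and one should interpret $M=\tau_z(c)$, $L=M$, and $i_c(\tau_z(c))=\tau_z(c)$ as a fixed point; I would verify using Lemma~\ref{lemm:psi} (with $d=c$) and Remark~\ref{rema:psi} that $\psi_z(\{c,c\})$ is defined and equals $\tau_z(c)$, matching this, so $\tau_z(c)$ is in the domain.

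For (iv) and (v), assuming now $\tau_z(c)\not\subset P$ for all $z$: for (iv), a line $\ell\subset P$ with $[\ell]\in\oU\cap P^*$ is, by Lemma~\ref{lemm:ubarp}, one of the pencils $z^*$, so $\ell$ passes through some $z\in Z$; since $\ell\not\subset\{$lines of the form $\tau_z(d)\}$, I would check that $\ell$ still meets ruling $c$ in a unique line $M$ (using that $\ell$ is a single line in $P$, and the fiber $Q_{g(c)}$ meets $P$ in the conic $Q\cap P$ through $Z$, so $\ell$ — not a component of this conic, as $\ell\neq\tau_z(d)$ — meets $Q_{g(c)}$ in a finite scheme and ruling $c$ in one member), that $M\neq\ell$ (as $M\not\subset P$ by the standing hypothesis, whereas $\ell\subset P$), and that the span $P'$ of $\ell$ and $M$ is not $P$ (again since $M\not\subset P$). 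For (v), $L=\tau_z(\bar c)$: this is the one case from (ii) that was excluded, and the point of the hypothesis $\tau_w(c)\not\subset P$ for all $w$ is to rescue it. When the fiber $Q_{g(c)}$ is smooth, $\tau_z(\bar c)$ lies in ruling $\bar c$, so it meets \emph{every} line of ruling $c$; the construction then should be read via the residual-line description — one still gets a well-defined third line, and I would again invoke Lemma~\ref{lemm:psi}/Remark~\ref{rema:psi} with the pair $\{c,\bar c\}\in E$, checking that $\psi_z(\{c,\bar c\})$ is defined (it is one of the generic points of $E$, not among the bad points $s_1,s_2,s_3$, provided $\tau_z(c)\not\subset P$), so $i_c(\tau_z(\bar c))=\psi_z(\{c,\bar c\})$ is defined.

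The main obstacle will be making rigorous the "residual line" interpretation of $i_c$ in the degenerate cases (iii) and (v), where the naive recipe "$L$ meets a unique line $M$ of ruling $c$" fails because $L$ lies in the relevant quadric fiber (in ruling $c$ or the opposite ruling $\bar c$): one must show the rational map $i_c$, \emph{a priori} defined only on $\cU$, genuinely extends as a morphism across these points rather than merely that some ad hoc construction produces a line. I expect to handle this uniformly by using the identity $i_c(\tau_z(d))=\psi_z(\{c,d\})$ from Remark~\ref{rema:psi}, valid on a dense open set, together with the explicit domain of $\psi_z$ worked out in Lemma~\ref{lemm:graphpsiz} (complement of $\{s_1,s_2,s_3\}$) — this reduces (iii), (v), and the $\tau_z$-cases of (ii) to checking which pairs $\{c,d\}$ are among the $s_i$, which in turn is exactly the condition that $\tau_z(c),\tau_z(d)\subset P$, i.e. controlled by the standing hypotheses. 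The $\cU$-case (i) and the genuinely finite-fiber part of (ii) and (iv) are then straightforward incidence computations in the quadric surface fibration.
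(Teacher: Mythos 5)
Your plan for (i), (ii) and (iv) is the paper's plan (check that $L$ meets a unique ruling-$c$ line $M$, that $M\neq L$, and that their span is not $P$), and for (v) your appeal to $\psi_z(\{c,\bar c\})$ being defined is exactly what the paper does. The serious problem is your treatment of (iii). You assert that $\tau_z(c)$ meets no other line of the $c$-ruling, so "$M=L$" and $i_c(\tau_z(c))=\tau_z(c)$ is a fixed point, and you propose to confirm this by verifying $\psi_z(\{c,c\})=\tau_z(c)$. That identity is false, and the verification you plan would fail. The correct extension is the one in Remark~\ref{rema:hilb}: the nonreduced subscheme $2c\in\Hilb^2C$ determines the plane $P'$ spanned by $\tau_z(c)$ and a normal direction to it inside the cone $S_z=\bigcup_d\tau_z(d)$ (the tangent plane to the cone along that ruling); then $P'\cap Y=2\tau_z(c)+N$ and $i_c(\tau_z(c))=\psi_z(2c)=N$, which for general $c$ is a line of $\cU$, not $\tau_z(c)$ (in the group $G$ of Section 2.4, $\psi_z(2c)=\tau_z(c)$ would force $z+z=2(\bar c)-(c)$, which holds for at most finitely many $c$). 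The tangent-plane mechanism is the content of the paper's proof of (iii), and it is absent from your argument.

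The second gap is in your "uniform" fallback: reducing definedness of $i_c$ at $\tau_z(d)$ to whether $\{c,d\}$ avoids the bad points $s_1,s_2,s_3$ of $\psi_z$ proves too much. Definedness of $\psi_z$ at $\{c,d\}$ is necessary but not sufficient for $i_c$ to extend at $\tau_z(d)$: if $\tau_z(c)\subset P$ and $d=\bar c$, the pair $\{c,\bar c\}$ is not an $s_i$ (only one of the two lines lies in $P$), so $\psi_z(\{c,\bar c\})$ is defined, yet $i_c$ is genuinely indeterminate at $\tau_z(\bar c)$ --- every line of the $c$-ruling meets $\tau_z(\bar c)$, and different choices yield different third lines sweeping out the whole pencil $z^*$ (this is spelled out in the proof of Lemma~\ref{lemm:jc}(iii), and is precisely why the hypothesis $\tau_z(c)\not\subset P$ is imposed for (iv)--(v)). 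Your criterion would wrongly place $\tau_z(\bar c)$ in the domain in that case; an argument for the lemma has to engage with the indeterminacy of $i_c$ on the surface $\oU$ itself, not only with the domain of $\psi_z$. A smaller point: in (ii) the phrase "the single point of $Q_{g(c)}$ lying over $g(d)\cap g(c)$" does not parse, since $g(c)\neq g(d)$ are points of $\bP^1$; the correct reason for uniqueness is that distinct quadrics of the pencil meet exactly along $Z$, so $\tau_z(d)\cap Q_{g(c)}=\{z\}$ when $\tau_z(d)\not\subset P$, and the unique ruling-$c$ line met is $\tau_z(c)$.
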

\begin{proof}
Let $Q\subset Y$ be the quadric surface with ruling $c$. It is clear that $i_c$ is defined on $\cU$ and on $\tau_z(d)$ meeting the conditions in (ii): each of these lines $L$ meets a unique line $M$ in the ruling $c$ of $Q$, and the plane $P'$ spanned by $L$ and $M$ is not $P$ so meets $Y$ in dimension one, giving a third line in $\oU$. For (iii), the same argument holds, but the plane $P'$ is spanned by $\tau_z(d)$ and a normal vector to that line in the surface
\[
\bigcup_{d\in C}\tau_z(d)\subset Y.
\]
as in Remark~\ref{rema:hilb}. A line $\ell$ of the form in (iv) also meets a unique line in the ruling $c$ of $Q$, namely $\tau_z(c)$. If $\tau_z(c)\not\subset P$, then the plane spanned by $\ell$ and $\tau_z(c)$ is not $P$ so meets $Y$ in dimension one, giving a third line in $\oU$. For (v), note that $\psi_z(\{c,\bar c\})$ is well-defined; it is the line $[T_zQ\cap P]$. Using $i_c(\tau_z(\bar c))=\psi_z(\{c,\bar c\})$, we see $\tau_z(\bar c)$ is in the domain of $i_c$.
\end{proof}

\begin{defi}
For $c\in C$, let $j_c:T\dashrightarrow T$ be the rational map already defined on $\cU\subset T$ by the rational involution $i_c$.
\end{defi}

\begin{lemm}\label{lemm:jc}
Each rational map $j_c$ extends to a morphism.
\end{lemm}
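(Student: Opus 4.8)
The plan is to use the principle that a rational self-map of a smooth projective surface which, over $\bar k$, contains no rational curves is necessarily a morphism (the content of Weil's extension theorem in this dimension). Since the $\operatorname{Div}(C)$-action and the $\Pic^0_C$-torsor structure on $T$ defined in the next subsection require $C$ to be smooth, I would assume that here. Then by Lemma~\ref{lemm:graphpsiz} and the discussion following it, $T_{\bar k}\cong\Pic^0_C$ is an abelian surface; in particular $T_{\bar k}$ is regular of dimension $2$, so $T$ is a smooth surface over $k$, and $T$ is projective since $T_{\bar k}$ is. By construction $j_c$ is defined on the dense open $\cU\subset T$, and $j_c^2=\operatorname{id}$ there (as $i_c^2=\operatorname{id}$), so $j_c$ is a birational self-map of the smooth projective surface $T$.

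Suppose, for contradiction, that $j_c$ is not a morphism, and let $z\in T$ be a point of indeterminacy. Resolving the indeterminacy of $j_c$ (cf.\ \cite[\S V.5]{hartshorne}) yields a smooth projective surface $\pi\colon\widetilde T\to T$ obtained from $T$ by finitely many blowups of points over $z$, together with a morphism $\widetilde f\colon\widetilde T\to T$ satisfying $\widetilde f=j_c\circ\pi$. The connected exceptional fiber $\pi^{-1}(z)$ is not contracted by $\widetilde f$: otherwise, since $\widetilde f$ is constant on every fiber of $\pi$ and $\pi_*\cO_{\widetilde T}=\cO_T$, the morphism $\widetilde f$ would factor as $f\circ\pi$ for a morphism $f\colon T\to T$, and $j_c=f$ would already be defined at $z$. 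Hence $\widetilde f(\pi^{-1}(z))$ is a curve in $T$; but $\pi^{-1}(z)$ is a connected union of copies of $\bP^1_{\kappa(z)}$, so after base change to $\bar k$ this curve---and therefore $T_{\bar k}$---contains a rational curve. This is impossible, since an abelian surface contains no rational curves, so $j_c$ is a morphism.

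I expect the one delicate point to be the non-contraction of the exceptional fiber over an indeterminacy point, which is the standard engine of elimination of indeterminacy for surfaces; the rest is formal given that $T$ is geometrically an abelian surface. Alternatively, one could argue by hand, using Lemma~\ref{lemm:domainic} to check that $i_c$ is defined along every boundary curve of $\oU$ and that the residual finite ambiguity disappears upon contracting $\bigcup_{z\in Z}z^*$ to pass from $\oU$ to $T$; this looks more cumbersome, and for $C$ irreducible but singular---where $T$ need not be an abelian surface---it would be the only available route, although that case plays no role in the sequel.
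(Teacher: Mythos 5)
Your proposal is correct where it applies, and it takes precisely the route the paper mentions and then deliberately declines: immediately after the statement of Lemma~\ref{lemm:jc}, the author observes that when $C$ is smooth the lemma follows from the fact that a rational map from a smooth variety to an abelian variety (or a torsor under one) extends to a morphism, but opts for an explicit case analysis instead. Your elimination-of-indeterminacy argument is a sound self-contained proof of that extension principle in the surface case; the non-contraction of the exceptional fiber over an indeterminacy point is handled correctly by the rigidity/factorization argument, and the only technical point to watch is that over an imperfect field the intermediate blowups need only be regular rather than smooth over $k$, which does not affect the conclusion that the exceptional fiber would map onto a curve covered by rational curves after base change to $\bar k$. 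The trade-offs are the ones you anticipate. First, your argument requires $T_{\bar k}$ to be an abelian surface, hence $C$ smooth, whereas the paper's case analysis makes no such hypothesis (though, as you note, only the smooth case is used downstream). Second, and more consequentially, the paper's explicit proof is not redundant bookkeeping: the boundary values it records---$j_c(\tau_z(d))=i_c(\tau_z(d))$ for $d\neq\bar c$, $j_c(\tau_z(\bar c))=z$, and $j_c(z)=\tau_z(\bar c)$---are invoked directly in later arguments; for instance, freeness of the $\Pic^0_C$-action on $T$ in Proposition~\ref{lemm:freelytransitive} is verified at the points $z\in Z\subset T$ precisely via $j_c(z)=\tau_z(\bar c)$. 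So if one substituted your proof, those formulas would still have to be established separately, essentially reproducing cases (ii)--(iv) of the paper's argument; your proof establishes existence of the extension, while the paper's also computes it.
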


When $C$ is smooth, Lemma~\ref{lemm:jc} can been deduced from the fact that any rational map from a smooth variety to an abelian variety extends to a morphism. However, it is useful to have descriptions for how $j_c$ is defined at each point on $T$, motivating a more explicit proof.

\begin{proof}[Proof of Lemma~\ref{lemm:jc}]
Let $Q$ be the quadric surface with a ruling specified by $c$. The following cases describe all types of points on $T$:
\begin{enumerate}
    \item[(i)] $[L]\in\cU$;
    \item[(ii)] $\tau_z(d)$ for $z\in Z$ if $\bar c\neq d\in C$ and $\tau_z(d)\not\subset P$;
    \item[(iii)] $\tau_z(\bar c)$ for $z\in Z$ if $\tau_z(\bar c)\not\subset P$;
    \item[(iv)] $z\in Z$.
\end{enumerate}
On each of these types of points, $j_c$ is defined as follows.
\begin{enumerate}
    \item[(i)] Note $i_c([L])\not\in P^*$, so $i_c([L])$ can be identified with a point of $T$, and $j_c([L])=i_c([L])$.
    \item[(ii)] The line in the $c$-ruling of $Q$ that $\tau_z(d)$ meets is $\tau_z(c)$. By Lemma~\ref{lemm:domainic}, $i_c(\tau_z(d))$ is defined: it is the third line $\ell$ in the plane spanned by $\tau_z(c)$ and $\tau_z(d)$. We can identify $j_c(\tau_z(d))=i_c(\tau_z(d))$ so long as the line $i_c(\tau_z(d))$ does not lie in $P$. To verify this, note that the rational map $\psi_z:\Sym^2C\dashrightarrow\oU$ sending $\{a,b\}$ to $i_a(\tau_z(b))$ is injective, and it is straightforward to check that the pencil of lines in $P$ through $z$ is the image of the pencil of pairs $\{a,\bar a\}$. Having assumed $\bar c\neq d$, we obtain $i_c(\tau_z(d))\not\subset P$.
    \item[(iii)] Suppose $\tau_z(c)\not\subset P$. Then by Lemma~\ref{lemm:domainic}, $i_c(\tau_z(\bar c))\in z^*$. Since $z^*$ is contracted in $T$ to a point identified with $z$, $j_c(\tau_z(\bar c))=z$.

    If $\tau_z(c)\subset P$, then $i_c(\tau_z(\bar c))$ is not defined: $\tau_z(\bar c)$ meets each line in the $c$-ruling of $Q$. Given a line $L\subset Q$ in the $c$-ruling, the plane $P'$ spanned by $L$ and $\tau_z(c)$ intersects $Y$ in a third line $M$. Noting that $P'$ contains $z$ and $L\cap P$, we see $[M]\in z^*$. Different choices of $L$ give different lines $[M]\in z^*$, but $z^*$ is contracted to $z$ in $T$, so we may define $j_c(\tau_z(\bar c))=z$.
    \item[(iv)] Since $j_c$ is an involution, applying $j_c$ to both sides of (iii) gives $j_c(z)=\tau_z(\bar c)$.
\end{enumerate}
\end{proof}

\subsection{Arithmetic of the torsor}
Here, we restrict to the case that $C$ is smooth, so $T$ is geometrically isomorphic to the abelian surface $\Pic^0_C$. This section describes how to endow $T$ with the structure of a $\Pic^0_C$-torsor, following the approach of Wang in \cite{wang}. The strategy is to put a group scheme structure on the disconnected group variety 
\[
{\Pic^0_C}\sqcup {T}\sqcup{\Pic^1_C}\sqcup {T'}
\]
where $T'$ is a variety isomorphic to $T$ whose points are written $-t$ for each $t\in T$.

The proofs of Lemmas~\ref{lemm:invcommute} and \ref{lemm:freelytransitive} below use information about the incidence relations between lines on cubic surfaces. We fix the following notation for the lines on a smooth cubic surface over an algebraically closed field, similar to the notational scheme used in \cite[V.4]{hartshorne}.

\begin{notation} A smooth cubic surface $S$ over an algebraically closed field is the blowup of $\bP^2$ at six points $p_1,\dots,p_6$ in general position and contains 27 lines:
\begin{itemize}
    \item[(i)] the six exceptional divisors $E_i$;
    \item[(ii)] for each pair $\{i,j\}$, the proper transform $\ell_{ij}$ of the line joining $p_i$ to $p_j$;
    \item[(iii)] and for each $i$, the proper transform $F_i$ of the conic passing through all the $p_j$ except $p_i$.
\end{itemize}
The incidence relations are as follows:
\begin{itemize}
    \item[(i)] $E_i$ meets $\ell_{ij}$ and $f_j$ for $i\neq j$;
    \item[(ii)] $\ell_{ij}$ meets $E_i$, $E_j$, $F_i$, $F_j$, and $\ell_{hk}$ for $\{i,j\}\cap\{h,k\}=\emptyset$;
    \item[(iii)] $F_i$ meets $E_j$ for $i\neq j$ and $\ell_{ij}$ for all $j$.
\end{itemize}
Moreover, given six pairwise skew lines $L_1,\dots,L_6\subset S$, the blowdown of the $L_i$ is isomorphic to $\bP^2$, so one may assume $L_i=E_i$.
\end{notation}

\begin{lemm}\label{lemm:invcommute}
The rules
\[
t+(c)=-j_{\bar c}(t)
\]
and 
\[
-t+(c)=j_c(t)
\]
define an action of $\mathrm{Div}(C)$ on $T\sqcup T'$.
\end{lemm}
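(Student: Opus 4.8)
The plan is to define the action on generators and extend $\bZ$-linearly. Working over $k^s$, the group $\mathrm{Div}(C)$ is free abelian on the points of $C$, so an action on $T\sqcup T'$ is the same datum as a family of pairwise commuting automorphisms $\alpha_c$ of $T\sqcup T'$ indexed by the points $c\in C$; the construction below is manifestly $\Gal(k^s/k)$-equivariant, so it descends to an action defined over $k$. Thus I would carry out two steps: (1) check that the prescribed rule defines an automorphism $\alpha_c$, and (2) check that the $\alpha_c$ commute.

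Step (1) is short. The rule $\alpha_c\colon t\mapsto -j_{\bar c}(t)$ on $T$ and $-t\mapsto j_c(t)$ on $T'$ is a morphism because $j_c$ and $j_{\bar c}$ are morphisms (Lemma~\ref{lemm:jc}) and negation $T\to T'$ is an isomorphism. Since $j_c$ extends the rational involution $i_c$ of Definition~\ref{defi:ratinv}, the morphisms $j_c^2$ and $\mathrm{id}_T$ agree on the dense open $\cU$, hence $j_c^2=\mathrm{id}_T$, and likewise for $j_{\bar c}$; using $\overline{\bar c}=c$ one then checks directly that $\alpha_{\bar c}\circ\alpha_c=\alpha_c\circ\alpha_{\bar c}=\mathrm{id}$, so $\alpha_c$ is an automorphism with inverse $\alpha_{\bar c}$.

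For step (2), evaluating $\alpha_c\circ\alpha_d$ and $\alpha_d\circ\alpha_c$ on each of the two components $T$ and $T'$ and using negation to identify the components, one finds that $\alpha_c\circ\alpha_d=\alpha_d\circ\alpha_c$ is equivalent to the single identity
\[
j_c\circ j_{\bar d}=j_d\circ j_{\bar c}\qquad\text{on }T,\text{ for all }c,d\in C
\]
(the $T'$-component yields the same identity after relabeling). This is the crux of the lemma and the step I expect to be the main obstacle; it is in the same spirit as Lemma~\ref{lemm:psi}, which handles arguments lying on a curve $\tau_z(C)$. Since both sides are morphisms and $T$ is separated, it suffices to verify the identity on the dense open $\cU$. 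For a general $[L]\in\cU$ I would produce a hyperplane $H\subset\bP^4$ such that $S:=H\cap Y$ is a smooth cubic surface containing $L$ together with every auxiliary line occurring in the two compositions: the ruling lines of the quadric surfaces $Q_c,Q_d\subset Y$ meeting $L$, their partner rulings in the conics $Q_c\cap H$ and $Q_d\cap H$, and the intermediate and terminal lines $i_{\bar d}(L)$, $i_{\bar c}(L)$, $j_cj_{\bar d}(L)$, $j_dj_{\bar c}(L)$. On such an $S$, each application of an $i_c$ becomes ``pass to the third line of the tritangent plane through the given line and a prescribed ruling line'', and $j_cj_{\bar d}(L)=j_dj_{\bar c}(L)$ reduces to a combinatorial assertion about coplanar triples of lines, checked against the incidence relations for lines on a smooth cubic surface recorded above (equivalently, against the action of $W(E_6)$). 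The delicate points will be showing that such a hyperplane exists for generic $L$ and that all the listed lines genuinely lie on the resulting cubic surface.

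Since in the present (smooth $C$) case $T$ is geometrically the abelian surface $\Pic^0_C$, one could alternatively argue that each $j_c$ is $-1$ composed with a translation by some $t_c$, reducing the identity to the statement that $t_c+t_{\bar c}$ is independent of $c$; but identifying the translations $t_c$ requires essentially the same geometry, so it may be cleanest simply to follow Wang's construction in \cite{wang}. Once the displayed identity is in hand, $c\mapsto\alpha_c$ extends uniquely to the asserted action of $\mathrm{Div}(C)$ on $T\sqcup T'$.
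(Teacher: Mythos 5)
Your framework is exactly the paper's: reduce the lemma to the single identity $j_c\circ j_{\bar d}=j_d\circ j_{\bar c}$, and verify it on a dense open set. The bookkeeping with $\alpha_c$, the inverse $\alpha_{\bar c}$, and the equivalence of commutativity on the $T$- and $T'$-components with that identity are all correct, as is the descent from $k^s$. The one thing you have not actually done is the step you yourself flag as the crux: producing the cubic surface and running the incidence check. For the record, the paper's choice makes both of your ``delicate points'' painless. Regard the two sides as morphisms $C\times C\times T\to T$ and pick $(c,d,[L])$ generic; take $H$ to be the $\bP^3$ spanned by $\sigma_L(\bar c)$ and $\sigma_L(\bar d)$, the lines of the rulings $\bar c$ and $\bar d$ meeting $L$. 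For generic choices $S=H\cap Y$ is a smooth cubic surface, and it automatically contains every auxiliary line on your list: $L$ lies in the span of the two $\sigma_L$'s; $S\cap P$ is a line; $H$ meets each of the two relevant quadric surfaces in a pair of ruling lines because it already contains one line of each; and every application of an $i_{(-)}$ outputs the third line of a tritangent plane contained in $H$. Labeling $\sigma_L(\bar c)=E_1$, $\sigma_L(\bar d)=E_2$, $L=\ell_{12}$, $S\cap P=F_6$, one gets $j_{\bar c}([L])=[F_2]$, the $d$-ruling line in $S$ is $\ell_{26}$, and the tritangent $\{F_2,\ell_{26},E_6\}$ gives $j_d\circ j_{\bar c}([L])=[E_6]$; symmetrically $j_c\circ j_{\bar d}([L])=[E_6]$. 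Working on $C\times C\times T$ rather than fixing $(c,d)$ also removes your concern about special pairs, and the only genuinely degenerate ones, $d\in\{c,\bar c\}$, make the identity a tautology since $j_c^2=\mathrm{id}$.
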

\begin{proof}
For the action to be well-defined, $(c)+(d)$ must act the same as $(d)+(c)$, so we need to check $j_c\circ j_{\bar d}=j_d\circ j_{\bar{c}}$ for any $c,d\in C$.

Work over $k^s$. As $T$ is irreducible, it suffices to show that the two maps $C\times C\times T\to T$ sending $(c,d,t)$ to $j_d\circ j_{\bar c}(t)$ and $j_c\circ j_{\bar d}(t)$ agree on a nonempty open set. In particular, we can choose $t=[L]\in\cU$ and $c\neq d\in C$ such that the $\bP^3$ spanned by the lines $\sigma_L(\bar c)$ and $\sigma_L(\bar d)$ meets $Y$ in a smooth cubic surface S. 

Label $\sigma_L(\bar c)=E_1$, $\sigma_L(\bar d)=E_2$, and $L=\ell_{12}$, using the notation for the lines on a smooth cubic surface. Notice that $S\cap P$ is a line meeting $\sigma_L(\bar c)$ and $\sigma_L(\bar d)$ but not $L$. Label this line $F_6$.

Now we calculate $j_{\bar c}([L])=[F_2]$: indeed, only  the line $F_2$ meets $E_1$ and $\ell_{12}$. The line $\ell_{26}$ meets $E_2$ and $F_6$ hence $\sigma_L(\bar d)$ and $P$, so it is a line in the ruling of a quadric surface specified by $d$. As $j_{\bar c}([L])=[F_2]$ meets $\ell_{26}$, $j_d\circ j_{\bar c}([L])$ is the line meeting $F_2$ and $\ell_{26}$, namely $E_6$. One calculates $j_c\circ j_{\bar d}([L])=[E_6]$ similarly.
\end{proof}

\begin{prop}
The principal divisors act trivially on $T\sqcup T'$, so the action of $\mathrm{Div}(C)$ on $T\sqcup T'$ descends to an action by $\Pic_C$.
\end{prop}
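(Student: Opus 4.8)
The plan is to reduce the claim to the geometry of the symmetric-power maps $\Sym^n C\to\Pic^n_C$, exploiting the fact that $T$ and $T'$ are geometrically abelian varieties; granting Lemmas~\ref{lemm:invcommute} and~\ref{lemm:jc}, the argument is essentially formal and, importantly, robust under change of base field.

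First I would make two reductions. Each $i_c$ is a rational involution, and by Lemma~\ref{lemm:jc} the map $j_c$ extends to a morphism, so $j_c$ is an automorphism of $T$; hence every prime divisor $(c)$ acts on $T\sqcup T'$ by an automorphism (interchanging the two components), and the monoid action of effective divisors promotes to a genuine action of the group $\mathrm{Div}(C)$. It therefore suffices to show that any two linearly equivalent effective divisors $D\sim D'$ of the same degree $n$ induce the same automorphism of $T\sqcup T'$, since a general principal divisor can be written $D-D'$ with $D,D'$ effective of equal degree. Moreover, adding $g^*p_0$ (for any $k$-point $p_0\in\bP^1$) to both $D$ and $D'$ composes both induced automorphisms with the same automorphism, so---the case $n=0$ being trivial---we may assume $n\ge 3=2g-1$.

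Now fix $t\in T$, and consider the map sending an effective divisor $(c_1)+\dots+(c_n)$ to $t+(c_1)+\dots+(c_n)$. By Lemma~\ref{lemm:invcommute} it is symmetric in the $c_i$, hence descends to a rational map $f\colon\Sym^n C\dashrightarrow T$ (if $n$ is even) or $\Sym^n C\dashrightarrow T'$ (if $n$ is odd). Since $\Sym^n C$ is smooth and the target is geometrically an abelian variety, Weil's extension theorem---applied over $\bar k$ and then descended---shows $f$ is in fact a morphism. For $n\ge 3$, Riemann--Roch gives $h^0(C,\cO_C(D))=n-1$ for every $D\in\Sym^n C$, so every fibre of $\Sym^n C\to\Pic^n_C$ is a projective space of dimension $n-2\ge 1$, which $f$ must contract to a point (a positive-dimensional projective space admits no nonconstant morphism to an abelian variety). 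But $D\sim D'$ means precisely that $D$ and $D'$ lie in a common fibre of $\Sym^n C\to\Pic^n_C$, so $f(D)=f(D')$, i.e. $t+D=t+D'$. As $t\in T$ was arbitrary and the identical argument applies with $T$ replaced by $T'$, the divisors $D$ and $D'$ act the same on $T\sqcup T'$. Hence principal divisors act trivially, and the action of $\mathrm{Div}(C)$ descends to $\Pic_C$.

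The real content has already been carried by Lemma~\ref{lemm:invcommute}; the only point needing attention is the interplay with the base field, namely that $T$ and $T'$ are torsors rather than abelian varieties over $k$. This causes no trouble: both ``geometrically an abelian variety'' and ``contains no rational curves'' are geometric conditions, so Weil's theorem and the contraction of the projective-space fibres of $\Sym^n C\to\Pic^n_C$ apply over $\bar k$, and the equalities $f(D)=f(D')$ they yield are equalities of $k$-points of schemes defined over $k$. (Alternatively one could verify by hand that a generating set of principal divisors---for instance the hyperelliptic fibres $g^*p$---acts trivially, using the cubic-surface incidence calculus from the proof of Lemma~\ref{lemm:invcommute}, but the functorial argument is shorter and handles the arithmetic for free.)
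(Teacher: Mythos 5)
Your proof is correct in its main line, but it takes a genuinely different route from the paper's. The paper exploits the genus-$2$ structure head-on: every nonzero class in $\Pic^0_C$ is written as $(c)-(d)$ in exactly two ways, swapped by the hyperelliptic involution, so Lemma~\ref{lemm:invcommute} makes the assignment $(c)-(d)\mapsto j_{\bar d}\circ j_{\bar c}$ well defined as a map $\Pic^0_C\to\Aut(T)$; rigidity (a pointed morphism of abelian varieties is a homomorphism) then shows the $\mathrm{Div}^0(C)$-action factors through $\Pic^0_C$. You instead run an Abel-type argument: for $n\ge 2g-1$ every fibre of $\Sym^nC\to\Pic^n_C$ is a $\bP^{n-2}$ with $n-2\ge1$, and a positive-dimensional projective space maps constantly to a (geometric) abelian variety, so $D\mapsto t+D$ factors through linear equivalence. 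Your argument is genus-agnostic and avoids the "exactly two representations'' bookkeeping; the paper's is shorter given the genus-$2$ setup and directly manufactures the homomorphism $\Pic^0_C\to\Aut(T)$ in a form it reuses. Both arguments lean on Lemma~\ref{lemm:invcommute} for commutativity and on $T$ being geometrically abelian, and both tacitly assume the action is algebraic in the curve points (the paper needs $C\times C\times T\to T$ to be a morphism for its map $\Pic^0_C\to\Aut(T)$ to be one), so you are not held to a higher standard there.

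Two small points. First, your detour through "rational map, then Weil extension'' leaves a hair of a gap: the extended morphism $f$ is only known to agree with the pointwise-defined $D\mapsto t+D$ on a dense open set, whereas you need the identification at the possibly special divisors $D,D'$. It is cleaner to observe that $(c_1,\dots,c_n)\mapsto t+(c_1)+\cdots+(c_n)$ is already a morphism $C^n\to T\sqcup T'$ (a composition of the everywhere-defined morphisms $j_c$, granting algebraicity in $c$), hence descends to a genuine morphism on $\Sym^nC$ agreeing with the action everywhere; then Weil is not needed. Second, your closing parenthetical is wrong: the differences $g^*p-g^*p'$ generate only $g^*(\mathrm{div}(k(\bP^1)^*))$, a proper subgroup of the principal divisors of $C$, so that "by hand'' alternative would not suffice. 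Since it is offered only as an aside, it does not affect the proof.
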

\begin{proof}
First, notice that $T$ and $T'$ are in different orbits of the action by $\text{Div}^0(C)$, and it suffices to show that principal divisors act trivially on $T$.

Every nontrivial divisor class in $\Pic^0_C$ can be represented as a difference of two effective divisors of degree 1 in exactly two ways: if $(c)-(d)$ represents $D$, then so does $(\bar d)-(\bar c)$. By Lemma~\ref{lemm:invcommute}, $j_c\circ j_d=j_{\bar d}\circ j_{\bar c}$, so there is a well-defined morphism $\Pic^0_C\to\text{Aut}(T)$ sending $(c)-(d)$ to $j_{\bar d}\circ j_{\bar c}$.

The map is automatically a homomorphism: its image is a commutative, projective subgroup scheme, and any unital morphism of abelian varieties is a homomorphism. Moreover, $j_{\bar d}\circ j_{\bar c}(t)=t+(c)-(d)$, so the homomorphism $\text{Div}^0(C)\to\text{Aut}(T)$ coming from the group action described earlier factors through the morphism $\Pic^0_C\to\text{Aut}(T)$, i.e. principal divisors are in the kernel.
\end{proof}

Since 
\[
t+(c)+(\bar c)=j_{\bar c}^2(t)=t,
\]
$\omega_C$ acts trivially on $T\sqcup T'$, so the action by $\Pic_C$ descends to an action by the quotient
\[
\Pic_C/\omega_C\cong{\Pic^0_C}\sqcup{\Pic^1_C}.
\]
Note that $\Pic^0_C$ acts on each of $T$ and $T'$, and an element of $\Pic^1_C$ exchanges the components $T$ and $T'$.

\begin{prop}\label{lemm:freelytransitive}
${\Pic^0_C}\sqcup{\Pic^1_C}$ acts simply transitively on $T\sqcup T'$.
\end{prop}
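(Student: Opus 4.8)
The plan is to reduce to showing that $\Pic^0_C$ alone acts simply transitively on $T$, then to make the geometric identification $T\cong\Pic^0_C$ of Definition~\ref{defi:t} completely explicit and recognize the action as translation. Simple transitivity can be tested after the faithfully flat base change $\operatorname{Spec}k^s\to\operatorname{Spec}k$, so I work over $k^s$ throughout and fix a point $z\in Z(k^s)$, which makes $\varphi_z\colon\cU\dashrightarrow\Sym^2C$ and $\psi_z\colon\Hilb^2C\dashrightarrow\oU$ available. For the first reduction: $\Pic^0_C$ preserves each of $T$ and $T'$ while any class in $\Pic^1_C$ interchanges them, and the involution $t\mapsto-t$ carries $T$ onto $T'$ and, $\Pic_C$ being commutative, intertwines the $\Pic^0_C$-action on $T$ with the one on $T'$ up to the automorphism $[-1]$ of $\Pic^0_C$; hence $T$ is a $\Pic^0_C$-torsor if and only if $T'$ is, after which transitivity and freeness of the action of $\Pic^0_C\sqcup\Pic^1_C$ on $T\sqcup T'$ follow formally by pre- and post-composing with one fixed class of degree $1$.

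For the core statement, I would post-compose $\varphi_z$ with the Abel map $\Sym^2C\to\Pic^2_C$, which for $g(C)=2$ is the blow-down of the unique $g^1_2$ and in particular birational, to get a birational map $\cU\dashrightarrow\Pic^2_C$. Since $\cU$ is open in $T$ and both $T$ (by Lemma~\ref{lemm:graphpsiz}) and $\Pic^2_C$ are geometrically abelian surfaces, this extends uniquely to an isomorphism $h\colon T\xrightarrow{\ \sim\ }\Pic^2_C$. Under $h$ the $\Pic^0_C$-action on $T$ becomes an action on $\Pic^2_C$ through the homomorphism constructed before the statement, now an endomorphism $\rho'\colon\Pic^0_C\to\Aut^0(\Pic^2_C)\cong\Pic^0_C$, and it suffices to check $\rho'=\mathrm{id}$, i.e. that for $[L]\in\cU$ with $\varphi_z([L])=\{a,b\}$ and for $c,d\in C$,
\[
h\bigl(j_{\bar d}\circ j_{\bar c}([L])\bigr)=\cO_C(a+b+c-d)\in\Pic^2_C,
\]
equivalently that the degree-$2$ effective divisor $\varphi_z(j_{\bar d}\circ j_{\bar c}([L]))$ lies in the class $a+b+c-d$. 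By irreducibility of $\cU$ and of $C\times C$ this identity of rational maps need only be verified at one generic point; once it holds, the action of $\cO_C(c-d)\in\Pic^0_C$ on $\Pic^2_C\cong T$ is literally translation by $\cO_C(c-d)$, and since the classes $\cO_C(c-d)$ exhaust $\Pic^0_C$ (they form the image of the proper surface $C\times C$) the transported action is the translation action, hence simply transitive. Note this route also sidesteps any worry about an infinitesimal stabilizer in positive characteristic, since the action is identified on the nose rather than merely shown to have trivial stabilizers set-theoretically.

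The computation of $\varphi_z(j_{\bar d}\circ j_{\bar c}([L]))$ is the heart of the proof and I expect it to be the main obstacle; it should proceed exactly as in the proof of Lemma~\ref{lemm:invcommute}. Choose $[L]\in\cU$ and $c\neq d$ so that the $\bP^3$ spanned by the sections $\sigma_L(\bar c)$ and $\sigma_L(\bar d)$ meets $Y$ in a smooth cubic surface $S$; using the standard labeling of the $27$ lines, arrange $\sigma_L(\bar c)=E_1$, $\sigma_L(\bar d)=E_2$, $L=\ell_{12}$, and $S\cap P=F_6$; then track $j_{\bar c}$ and $j_{\bar d}$ through the incidence relations and, for each intermediate line, read off the value of $\varphi_z$ from the two residual lines through $z$ in the plane it spans with $z$. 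The difficulty is bookkeeping: one must locate, among the $27$ lines, the sections $\tau_z$ and their hyperelliptic conjugates, the ruling lines picked out by $c$ and $d$, and the residual lines produced by $i_{\bar c}$ and $i_{\bar d}$, then confirm the resulting pair of points of $C$ has the asserted class; a secondary point is to check the rational map involved is defined at the chosen $[L]$, which holds for generic data. Finally, once $\Pic^0_C$ acts simply transitively on $T$ over $k^s$, descent gives the statement over $k$: the morphism
\[
(\Pic^0_C\sqcup\Pic^1_C)\times(T\sqcup T')\longrightarrow(T\sqcup T')\times(T\sqcup T')
\]
given by the action on the first factor and the identity on the second becomes an isomorphism after base change to $k^s$, hence is already one over $k$.
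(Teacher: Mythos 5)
Your overall framework is sound and genuinely different from the paper's: you would extend $\mathrm{Abel}\circ\varphi_z$ to an isomorphism $h\colon T\to\Pic^2_C$, note that the homomorphism $\Pic^0_C\to\Aut(T)$ already constructed must land in the translations, and identify the resulting endomorphism $\rho'$ of $\Pic^0_C$ with the identity by a single generic evaluation, after which simple transitivity is automatic. The paper instead proves transitivity and freeness separately: transitivity by showing the orbit maps $s_L\colon\Pic^1_C\to T'$ are surjective (via the explicit relation $[L]+(\bar c)+(\bar d)+(\bar e)=-[M]$ on a smooth hyperplane section, followed by a properness argument with the summation map $T\times\Pic^1_C\to T'$), and freeness by computing stabilizers at the special points $z\in Z$, where $j_c(z)=\tau_z(\bar c)$ is known explicitly. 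Your route, if completed, is arguably cleaner and, as you note, rules out infinitesimal stabilizers for free. Your opening and closing reductions (between $T$, $T'$ and the degree-one part, and descent from $k^s$) are fine.

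The gap is in the step you yourself flag as the heart: verifying that $\varphi_z(j_{\bar d}\circ j_{\bar c}([L]))$ has class $a+b+c-d$. You propose to do this ``exactly as in the proof of Lemma~\ref{lemm:invcommute},'' i.e.\ by tracking incidences among the $27$ lines of a smooth cubic surface $S=Y\cap\bP^3$. But that computation only ever evaluates the involutions $j$, whose residual lines stay inside $S$; it never evaluates $\varphi_z$. To compute $\varphi_z$ at a line $N\subset S$ you need the two residual lines in the plane spanned by $N$ and $z$, and $z$ does not lie on the smooth surface $S$, so those residual lines are not among the $27$ lines and the points of $C$ they determine are invisible to the incidence combinatorics of $S$; moreover the assertion is an identity of divisor \emph{classes}, not of effective divisors, so even after locating the relevant points one would still need to exhibit a linear equivalence. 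As described, the computation will not close. It can be repaired inside your own framework by evaluating $\rho'$ at a boundary point instead of at a generic $[L]$: by Lemma~\ref{lemm:jc} and Remark~\ref{rema:psi} one has $z+(c)-(d)=j_{\bar d}\bigl(j_{\bar c}(z)\bigr)=j_{\bar d}(\tau_z(c))=\psi_z(\{c,\bar d\})$, while $h(z)=\cO_C(g^1_2)=\omega_C$ because the contracted pencil $z^*$ is $\psi_z(E)$ (Lemma~\ref{lemm:graphpsiz}); hence $h(z+(c)-(d))=\cO_C(c+\bar d)=h(z)+\cO_C(c-d)$, which gives $\rho'=\mathrm{id}$ with no new geometry. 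With that substitution your proof goes through; without it, the central identity remains unverified.
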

\begin{proof}
To show that the action of ${\Pic_C^0}\sqcup{\Pic_C^1}$ on $T\sqcup T'$ is transitive, we must check that for each $s,t\in T$,
\begin{enumerate}
\item[(i)] there is a divisor class $[D]\in\Pic^1_C$ so that $s+[D]=-t$;
\item[(ii)] there is a divisor class $[E]\in\Pic^0_C$ so that $s+[E]=t$.
\end{enumerate}
Moreover, we verify
\begin{enumerate}
\item[(iii)] the action of $\Pic^0_C$ on $T$ is free;
\item[(iv)] the action of ${\Pic^0_C}\sqcup{\Pic^1_C}$ on $T\sqcup T'$ is free.
\end{enumerate}
(i) First, we show that when $[L],[M]\in\cU$ span a $\bP^3$ meeting $Y$ in a smooth cubic surface, there exists $[D]\in\Pic^1_C$ so that $[L]+[D]=-[M]$.

Using Lemma~\ref{lemm:intpairing}, let $c,d,e\in C$ be the points lying below $\sigma_L\cap\sigma_M\subset\cF$. No two of $\sigma_L(c)$, $\sigma_L(d)$, $\sigma_L(e)$ meet: if $\sigma_L(c)$ met $\sigma_L(d)$, then they would span a plane containing $L$ and $M$ which would not meet $X$ in degree three. Working over $k^s$ and using the standard notation for lines on the cubic surface $S$, assume $L=E_1$, $M=E_2$, $S\cap P=E_3$, $\sigma_L(c)=F_4$, $\sigma_L(d)=F_5$, and $\sigma_L(e)=F_6$. One calculates 
\[
j_{\bar d}\circ j_c([L])=j_{\bar d}([\ell_{14}])=[\ell_{26}]=j_e([M]),
\]
so 
\[
(e)-[M]=[L]-(c)-(d),
\]
and 
\[
[L]+(\bar c)+(\bar d)+(\bar e)=-[M].
\]
Taking $[D]=(\bar c)+(\bar d)+(\bar e)-\omega_C$, we are done. 

Now, for $[L]\in\cU$, define a morphism $s_L:\Pic^1_C\to T'$ by $s_L([D])=[L]+[D]$. By the above, the image of $s_L$ contains the dense open set
\[
V=\{-[M]\;|\; [M]\in\cU\;\text{and}\;\text{span}(L,M)\cap Y\;\text{is a smooth cubic surface}\},
\]
so $s_L$ is surjective.

Next, consider the summation map 
\[
\Sigma:T\times\Pic^1_C\to T',\;\;(u,[D])\mapsto u+[D].
\]
Let $-t\in T'$. As $s_L$ is surjective for each $[L]\in\cU$, we get 
\[
\cU\subset\pi_1(\Sigma^{-1}(-t))
\]
where $\pi_1$ is the projection $T\times\Pic_C^1\to T$. Since $\pi_1$ is closed, 
\[
\pi_1(\Sigma^{-1}(-t))=T.
\]
That is, for each $s\in T$ there exists $[D]\in\Pic^1_C$ so that $s+[D]=-t$.

(ii) Apply (i) to obtain $[D],[E]\in\Pic^1_C$ with $s+[D]=-t$ and $t+[E]=-t$. Then $s+[D-E]=t$.

(iii) Work over $k^s$. It is enough to check that if $(c)+(d)$ fixes a point $z\in Z\subset T$, then $(c)+(d)=\omega_C$. Indeed, if $z= z+(c)+(d)$, then
\begin{align*}
    z+(\bar d)&=z+(c)\\
    j_d(z)&=j_{\bar c}(z)\\
    \tau_z(\bar d)&=\tau_z(c),
\end{align*}
so $\bar d =c$ since $\tau_z$ is an embedding.

(iv) The argument for (iii) also shows $\Pic^0_C$ acts freely on $T'$. Moreover, if $t+[D]=t+[E]$ for $t\in T\sqcup T'$ and $[D],[E]\in\Pic^1_C$, then $[D]-[E]\in\Pic^0_C$ fixes $t$, so  $[D]-[E]=0$ by (iii).
\end{proof}

We now prove our main structural results about $T$.

\begin{prop}
There is a commutative group law on 
\[
G={\Pic^0_C}\sqcup T\sqcup{\Pic^1_C}\sqcup T'
\]
extending the group law on ${\Pic^0_C}\sqcup{\Pic^1_C}$ such that for $s,t\in T\sqcup T'$ and $c\in C$,
\begin{enumerate}
    \item $s+(c)=-j_{\bar c}(s)$ and $-s+(c)=j_c(s)$;
    \item $s+t$ is the unique divisor class $[D]$ such that $-s+[D]=t$ under the action of ${\Pic^0_C}\sqcup{\Pic^1_C}$ on $T\sqcup T'$ defined prior.
\end{enumerate}
\end{prop}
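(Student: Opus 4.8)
The plan is to build the operation on $G$ block by block and then check the group axioms; the key input is Proposition~\ref{lemm:freelytransitive}, which says exactly that $X:=T\sqcup T'$ is a torsor under the commutative group scheme $H:=\Pic^0_C\sqcup\Pic^1_C\cong\Pic_C/\omega_C$. Write $\iota\colon X\to X$ for the involution $s\mapsto-s$ interchanging the two copies $T$ and $T'$. On $H\times H$ I use the given group law; for $h\in H$ and $s\in X$ I set $h+s=s+h:=h\cdot s$, the torsor action; and for $s,t\in X$ I let $s+t$ be the unique class $[D]\in H$ with $-s+[D]=t$, which exists and is unique by simple transitivity. Property (2) then holds by construction, and (1) is merely the description in Lemma~\ref{lemm:invcommute} of how the degree-one classes $[(c)]$ act on $T$ and on $T'$. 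Since the action $H\times X\to X$, the difference morphism $X\times X\to H$, and $\iota$ are morphisms of $k$-schemes, the resulting map $G\times G\to G$ is a morphism; and since (1) and (2) determine the operation on every pair of components, any group law with those two properties must be this one, so it suffices to verify the axioms.

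The identity is $0\in\Pic^0_C$, and the inverse of $s\in X$ is $\iota(s)$ --- indeed $s+\iota(s)$ is by definition the class fixing $\iota(s)$, i.e.\ $0$ --- consistent with the notation $T'=\{-t:t\in T\}$. Commutativity and associativity then follow formally from the torsor axioms, the commutativity of $H$, the triviality $\iota^2=\mathrm{id}_X$, and one compatibility:
\[
\iota(h\cdot s)=(-h)\cdot\iota(s),\qquad h\in H,\ s\in X.
\]
Granting this, commutativity on $H\times H$ and $H\times X$ is built in, and for $s,t\in X$ the identity $s+t=t+s$ follows from the consequence $\iota(b)\ominus\iota(a)=a\ominus b$ of the compatibility (with $\ominus$ the torsor difference valued in $H$). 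Likewise associativity is immediate in the cases with at most one factor in $X$, by the group axioms of $H$ and the axioms of the action; in the remaining cases, $(h+s)+t$ and $(s+t)+u$ with $s,t,u\in X$, one uses the compatibility to rewrite $\iota(h\cdot s)$, respectively writes each of $s$ and $u$ as an $H$-translate of $\iota(t)$, after which the equality collapses to the commutativity of $H$. None of this is delicate.

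The real content --- and the step I expect to be the main obstacle --- is the compatibility $\iota(h\cdot s)=(-h)\cdot\iota(s)$. The two sides are morphisms $H\times X\to X$, and the locus of $h\in H$ over which they agree is a closed subgroup scheme of $H$ (a subgroup by the action axioms and commutativity of $H$; closed since $X$ is proper over $k$). It therefore suffices to check the identity for $h$ ranging over a topologically dense set of $k^s$-points of $H$; as $\Pic^0_C(k^s)$ is generated by the classes $[(c)-(d)]$ and $\Pic^1_C(k^s)$ by the classes $[(c)]$ for $c,d\in C(k^s)$, this reduces to the single case $h=[(c)]$. For that case, use the relation $[-(c)]=[(\bar c)]$ in $\Pic_C/\omega_C$, which holds because $(c)+(\bar c)\sim\omega_C$: substituting the explicit action formulas of Lemma~\ref{lemm:invcommute} into both sides shows that each side equals $j_{\bar c}(t)$ when $s=t\in T$ and each equals $-j_c(u)$ when $s=-u\in T'$, so the compatibility holds tautologically. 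With it in hand, the axiom checks sketched above finish the proof.
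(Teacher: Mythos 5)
Your proof is correct and follows essentially the same route as the paper: both reduce everything to the simply transitive action of $\Pic^0_C\sqcup\Pic^1_C$ on $T\sqcup T'$ from Proposition~\ref{lemm:freelytransitive} and then verify the group axioms by formal torsor manipulations. The only organizational difference is that you isolate and prove the compatibility $\iota(h\cdot s)=(-h)\cdot\iota(s)$ explicitly, whereas the paper leaves it implicit in the defining formulas and checks only associativity, in four cases, using freeness of the action.
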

\begin{proof}
All that remains to check is that the group law is associative, i.e. the following all hold for for $s,t,u\in T\sqcup T'$ and $[D],[E],[F]\in{\Pic^0_C}\sqcup{\Pic^1_C}$.
\begin{enumerate}
\item[(i)]    $([D]+[E])+[F]=[D]+([E]+[F])$
\item[(ii)]    $(s+[D])+[E]=s+([D]+[E])$
\item[(iii)]    $(s+t)+[D]=s+(t+[D])$
\item[(iv)]    $(s+t)+u=s+(t+u)$
\end{enumerate}
The first is inherited from the associativity of ${\Pic^0_C}\sqcup{\Pic^1_C}$, and the second follows from the fact that $\text{Div}(C)$ acts on $T\sqcup T'$. For (iii), let $[E]=s+t$, meaning $[E]$ is the unique divisor class such that $-s+[E]=t$. Using (ii),
\[
-s+([E]+[D])=t+[D],
\]
i.e.
\[
-s+((s+t)+[D])=t+[D].
\]
Similarly, $s+(t+[D])$ is defined by the equation
\[
-s+(s+(t+[D]))=t+[D],
\]
so 
\[
-s+((s+t)+[D])=-s+(s+(t+[D])).
\]
Since ${\Pic^0_C}\sqcup{\Pic^1_C}$ acts freely on $T\sqcup T'$,  we deduce
\[
(s+t)+[D]=s+(t+[D])
\]
proving (iii). For (iv), let $K_1=(s+t)+u$ and $K_2=s+(t+u)$. Using (iii) and commutativity,
\begin{align*}
    t+K_1&=t+(u+(s+t))\\
    &=(t+u)+(s+t)\\
    &=(t+s)+(t+u)\\
    &=t+(s+(t+u))\\
    &=t+K_2,
\end{align*}
so again using (iii), the divisor class $t+K_1$ sends $-K_1$ and $-K_2$ both to $t$. As ${\Pic^0_C}\sqcup{\Pic^1_C}$ acts invertibly, $K_1=K_2$.
\end{proof}

\begin{coro}\label{coro:wcrelations}
In $H^1(k,\Pic^0_{C})$, $4[T]=0$ and $2[T]=[\Pic^1_C]$.
\end{coro}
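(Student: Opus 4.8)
The plan is to read the two relations directly off the connecting homomorphism attached to the group scheme $G$ constructed in the previous proposition, exploiting that $\Pic^0_C$ is its identity component and that $T$ represents a generator of its component group.

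First I would record that $G=\Pic^0_C\sqcup T\sqcup\Pic^1_C\sqcup T'$ is a smooth commutative group scheme over $k$, being a disjoint union of the abelian variety $\Pic^0_C$ and of torsors under it, each of which is smooth over $k$. Its identity component is $\Pic^0_C$, so next I need to identify the component group. It has order $4$, and the defining relation for $s+t$ in the preceding proposition pins it down: for $s,t\in T$ one has $-s\in T'$, and among the classes in $\Pic^0_C\sqcup\Pic^1_C$ acting on $T\sqcup T'$ only those in $\Pic^1_C$ carry $-s$ back into the component $T$, so $s+t\in\Pic^1_C$; likewise $T+\Pic^1_C$ lies in $T'$. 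Since the four components are distinct, this shows $G/\Pic^0_C\cong\bZ/4\bZ$ with $T\mapsto 1$, $\Pic^1_C\mapsto 2$, and $T'\mapsto 3$, and so there is a short exact sequence of smooth commutative $k$-group schemes
\[
0\longrightarrow\Pic^0_C\longrightarrow G\longrightarrow\bZ/4\bZ\longrightarrow 0.
\]

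Then I would pass to the associated long exact sequence of Galois cohomology and extract the connecting homomorphism
\[
\delta\colon\bZ/4\bZ=H^0(k,\bZ/4\bZ)\longrightarrow H^1(k,\Pic^0_C),
\]
which is a homomorphism of abelian groups. The one nontrivial input is the standard identification, for $n\in\bZ/4\bZ$, of $\delta(n)$ with the class of the fiber $G_n$ of $G\to\bZ/4\bZ$ over $n$: this fiber is a $\Pic^0_C$-torsor for the translation action of the identity component $\Pic^0_C=G_0$, and for any geometric point $\tilde g\in G_n(k^s)$ the cocycle $\sigma\mapsto\sigma(\tilde g)-\tilde g$ (which is valued in $\Pic^0_C$ because $\sigma(\tilde g)$ and $\tilde g$ lie in the same fiber) represents both $\delta(n)$ and $[G_n]$. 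With the labeling above this gives $\delta(1)=[T]$, $\delta(2)=[\Pic^1_C]$, and $\delta(0)=0$.

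Finally, since $\delta$ is additive, $2[T]=\delta(1)+\delta(1)=\delta(2)=[\Pic^1_C]$ and $4[T]=\delta(4\cdot 1)=\delta(0)=0$, which is exactly the corollary. I do not expect a real obstacle: the only two steps with genuine content are the identification $G/\Pic^0_C\cong\bZ/4\bZ$ (forced by the defining relation for $s+t$, which in particular shows $[T]$ generates the component group) and recalling that the degree-zero part of the connecting map sends a component of $G$ to its class in the Weil-Ch\^atelet group; both are short. One could instead argue directly with cocycles: fixing $\tilde t\in T(k^s)$, the cocycle $\sigma\mapsto\sigma(\tilde t)-\tilde t$ represents $[T]$, and forming $\tilde t+\cdots+\tilde t$ inside $G$ scales this cocycle by the number of summands while moving it into the component $\Pic^1_C$ after two steps and into $\Pic^0_C$ after four, but the exact-sequence formulation keeps the bookkeeping cleanest.
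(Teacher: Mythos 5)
Your proposal is correct and is essentially the paper's own argument: the paper likewise forms the short exact sequence $0\to\Pic^0_C\to G\to\bZ/4\bZ\to0$ with $T\mapsto1$, $\Pic^1_C\mapsto2$, $T'\mapsto3$, and reads off both relations from the connecting homomorphism $H^0(k,\bZ/4\bZ)\to H^1(k,\Pic^0_C)$ sending $n$ to the class of the fiber over $n$. Your added justifications (identifying the component group from the definition of $s+t$, and the cocycle description of the connecting map) are correct elaborations of steps the paper leaves implicit.
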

\begin{proof}
There is a short exact sequence of group schemes
\[
0\longrightarrow\Pic^0_C\longrightarrow G\overset\pi\longrightarrow\bZ/4\bZ\longrightarrow0
\]
where $\pi(T)=1$, $\pi(\Pic_C^1)=2$, and $\pi(T')=3$. This yields a short exact sequence 
\[
0\longrightarrow\Pic^0_C(k^s)\longrightarrow G(k^s)\overset\pi\longrightarrow\bZ/4\bZ\longrightarrow0
\]
of Galois modules. In the long exact sequence in Galois cohomology, the connecting homomorphism $H^0(k,\bZ/4\bZ)\to H^1(k,\Pic^0_{C})$ sends $1\mapsto[\pi^{-1}(1)]=[T]$ and $2\mapsto[\Pic^1_C]$.
\end{proof}

This result, together with Proposition~\ref{prop:fanothree}, proves Theorem~\ref{theo:fanothree}.

\section{Rationality of cubic threefolds containing a plane}\label{sect:rationality}

Let $Y\subset\bP^4$ be a general cubic threefold containing a plane $P$ over a field of characteristic not $2$, and suppose further that $Y\setminus P$ is smooth. As in Section~\ref{sect:threefold}, let $\cov Y=\Bl_PY$, let $C$ be the double cover of $\bP^1$ branched over the discriminant of the quadric surface fibration $q\colon\cov Y\to\bP^1$. Since we have assumed $Y\setminus P$ is smooth, so too is $C$ by Lemma~\ref{lemm:singthreefold}. Let $T$ be the $\Pic^0_C$-torsor birational to a the Fano scheme of lines in $Y\setminus P$.

There are two classical rationality constructions for $Y$ over $k^s$: projection from a node on $Y$ or projection onto $L\times P$ where $L\subset Y\setminus P$ is a line. However, neither construction may be available over $k$: by Corollary~\ref{coro:tpoints}, the existence of a $k$-rational node on $Y$ or a $k$-rational line in $Y\setminus P$ is equivalent to the existence of a $k$-point on the torsor $T$ associated to $Y$. If $k$ is finite, Lang's theorem affords the following:

\begin{prop}
Let $Y$ be a general cubic threefold containing a plane $P$ over a finite field $k$ of characteristic not $2$, and suppose $Y\setminus P$ is smooth. Then $Y$ is rational over $k$.
\end{prop}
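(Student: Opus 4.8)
The plan is to reduce the statement to the existence of a $k$-point on the torsor $T$ and then kill the relevant Galois cohomology with Lang's theorem. Since $Y\setminus P$ is smooth, Lemma~\ref{lemm:singthreefold}(v) shows the discriminant of $q$ is reduced, so $C$ is a smooth curve of genus $2$ and $\Pic^0_C$ is an abelian surface over $k$; by the construction of Section~\ref{sect:threefold}, $T$ is a torsor under $\Pic^0_C$, with class $[T]\in H^1(k,\Pic^0_C)$. By the discussion preceding this proposition (a consequence of Corollary~\ref{coro:tpoints}), $Y$ has a $k$-rational node or a $k$-rational line in $Y\setminus P$ precisely when $T(k)\neq\emptyset$, and in either of those cases one of the two classical constructions—projection from the $k$-rational node, or projection onto $L\times P$ for a $k$-rational line $L\subset Y\setminus P$, both defined over $k$—exhibits $Y$ as rational over $k$. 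So it suffices to prove $T(k)\neq\emptyset$.

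This is where Lang's theorem enters: over a finite field $k$, $H^1(k,G)=0$ for every smooth connected algebraic group $G$, in particular for the abelian variety $\Pic^0_C$. Hence $[T]=0$, so $T\cong\Pic^0_C$ as $k$-varieties and $T(k)\neq\emptyset$ (for instance the identity section lifts to a $k$-point). Combining with the previous paragraph, $Y$ is rational over $k$.

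There is no serious obstacle here; the content of the statement lies entirely in Section~\ref{sect:threefold} and in the classical birational constructions, with Lang's theorem as the punchline. The only point meriting a line of care is the translation of a $k$-point of $T=\bigl(\oU\setminus\bigcup_{z\in Z}z^*\bigr)\cup Z$ back to geometry on $Y$: a $k$-point lying in $Z$ is directly a $k$-rational node, while a $k$-point in $\oU\setminus\bigcup_{z}z^*$ lies either in $\cU$, giving a $k$-rational line in $Y\setminus P$, or in some $C_z\subset\cF$, in which case the corresponding line is defined over $k$ and hence so is its intersection point with $P$, which is the node $z$—so that case again yields a $k$-rational node. Either way the hypotheses of the rationality criterion are met. (This is also consistent with Corollary~\ref{coro:wcrelations}, since $[T]$ is forced to be $2$-torsion with $2[T]=[\Pic^1_C]$, and over a finite field all such classes vanish.)
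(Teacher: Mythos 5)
Your proposal is correct and follows exactly the route the paper intends: the paper states this proposition as an immediate consequence of Lang's theorem after observing that a $k$-point of $T$ is equivalent to a $k$-rational node or a $k$-rational line in $Y\setminus P$, each of which feeds a classical rationality construction. Your extra care in translating a $k$-point of $T=\bigl(\oU\setminus\bigcup_{z\in Z}z^*\bigr)\cup Z$ back to a node or line on $Y$ (via Corollary~\ref{coro:tpoints} and Lemma~\ref{lemm:ubarf}) is a welcome elaboration of a step the paper leaves implicit.
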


More generally, we prove that the torsor $T$ completely controls the rationality of $Y$ over $k$:

\begin{prop}\label{prop:rationality}
Let $Y$ be a general cubic threefold containing a plane $P$ over a field $k$ of characteristic not $2$, and suppose $Y\setminus P$ is smooth. Then $Y$ is rational over $k$ if and only if $T(k)\neq\emptyset$.
\end{prop}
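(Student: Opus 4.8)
The plan is to prove both implications. For the easy direction, suppose $T(k)\neq\emptyset$. By Corollary~\ref{coro:tpoints}, a $k$-point of $T$ is either a $k$-point of $Z$ (hence a $k$-rational node of $Y$) or a $k$-rational line $[L]\in\cU$. In the first case, projection from the $k$-rational node exhibits $Y$ as birational over $k$ to $\bP^3$; in the second, projection from the line $L$ onto $L\times P$ (or rather the standard construction giving a birational map to $\bP^1\times P\cong\bP^1\times\bP^2$) works over $k$. So I would spell out that these classical constructions are defined over the field of definition of the node or line, and conclude $Y$ is $k$-rational.

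The hard direction is the converse: if $Y$ is $k$-rational, then $T(k)\neq\emptyset$. The strategy is to use the intermediate Jacobian torsor obstruction of Benoist--Wittenberg~\cite{benoistwitt} (and Hassett--Tschinkel~\cite{hassetttschinkelrationality}). Since $Y\setminus P$ is smooth, $Y$ has exactly the nodes along $P$ parametrized by $Z$; let $\widetilde Y\to Y$ be a resolution and recall that its intermediate Jacobian and the associated torsors are built from the conic bundle / quadric surface bundle structure $q\colon\cov Y\to\bP^1$. The key computation is to identify the torsor $J^2$ (or the relevant codimension-two cycle-class torsor) of $\widetilde Y$ with $T$, or at least to show that $k$-rationality of $Y$ forces the class of that torsor to vanish in $H^1(k,\Pic^0_C)$, and then to invoke Corollary~\ref{coro:wcrelations} together with the freeness of the $\Pic^0_C$-action on $T$ to deduce $[T]=0$, i.e. $T(k)\neq\emptyset$. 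Concretely, for a conic bundle (or quadric surface bundle) over $\bP^1$ with discriminant curve $C$ of genus $2$, the intermediate Jacobian is $\Pic^0_C$ and the natural codimension-two cycle torsor is exactly the Fano surface of lines in the fibers modulo the rulings, which is $T$; I would cite or reprove this identification, using the explicit description of the components of $F(Y)$ from Section~\ref{sect:threefold} and the numerics of Lemma~\ref{lemm:intpairing} to pin down which torsor class arises.

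I expect the main obstacle to be making the intermediate Jacobian torsor obstruction apply cleanly in the presence of the nodes on $Y$: one must either work with a fixed resolution $\widetilde Y$ and check that the Benoist--Wittenberg criterion (rationality $\Rightarrow$ the intermediate Jacobian torsor is trivial, in fact that $\mathrm{CH}^2$ is "as small as possible" over $k$) is insensitive to the choice of resolution and correctly bookkeeps the exceptional divisors over $Z$, or argue via a smooth deformation. A secondary subtlety is matching conventions: the obstruction naturally produces a $\Pic^1$- or $\Pic^2$-type torsor, and one needs the relation $2[T]=[\Pic^1_C]$ from Corollary~\ref{coro:wcrelations} to convert between the torsor the criterion kills and $[T]$ itself—so some care is required to ensure the $2$-divisibility does not lose information (it does not, precisely because $4[T]=0$ forces $[T]$ to be determined by $[\Pic^1_C]=2[T]$ together with the primitive information, and a $k$-point of $\Pic^1_C$ does not a priori give a $k$-point of $T$; the cleaner route is to identify $T$ itself as the relevant cycle torsor). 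I would therefore aim to exhibit $T$ directly as the torsor $(\mathrm{CH}^2_{\widetilde Y/k})^0$-coset appearing in the Benoist--Wittenberg obstruction, so that rationality yields a $k$-point of $T$ on the nose.
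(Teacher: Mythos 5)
Your easy direction is fine and matches the paper: by Corollary~\ref{coro:tpoints} a $k$-point of $T$ is a $k$-rational node in $Z$ or a $k$-rational line in $Y\setminus P$, and either classical construction is defined over $k$. Your overall strategy for the converse is also the paper's: work on the smooth model (the paper simply uses $\cov Y=\Bl_PY$, which is smooth by Lemma~\ref{lemm:singthreefold}, so no auxiliary resolution or deformation argument is needed, and $Y$ is $k$-rational iff $\cov Y$ is), identify $(\CH^2_{\cov Y})^0\simeq\Pic^0_C$ via the quadric bundle, and identify $T$ with the torsor $(\CH^2_{\cov Y})^\gamma$ where $\gamma$ is the class of a line in $Y\setminus P$ (the paper does this by noting that distinct lines in $Y\setminus P$ are rationally inequivalent, since a rational equivalence would give a nonconstant map $\bP^1\to T$, and then comparing dimensions).

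The genuine gap is in your final step. Twice you assert that $k$-rationality ``forces the class of that torsor to vanish'' and that identifying $T$ with the cycle torsor means ``rationality yields a $k$-point of $T$ on the nose.'' That is not what Theorem~\ref{theo:bw311} says: rationality only forces $[(\CH^2_{\cov Y})^\gamma]=[\Pic^{e}_C]$ for \emph{some} integer $e$, and such classes are in general nonzero, so identifying $T$ with the cycle torsor does not by itself produce a $k$-point. The missing argument is exactly where Corollary~\ref{coro:wcrelations} enters, and not in the way you describe: since $C$ is hyperelliptic of genus $2$, $[\Pic^2_C]=0$ (the canonical class is a rational point), so $2[\Pic^e_C]=0$ for all $e$; combining $[T]=[\Pic^e_C]$ with the relation $2[T]=[\Pic^1_C]$ then gives $[\Pic^1_C]=2[\Pic^e_C]=0$, hence $[\Pic^e_C]=e[\Pic^1_C]=0$, hence $[T]=0$. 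Without this chain the proof does not close; the ``freeness of the $\Pic^0_C$-action'' that you invoke plays no role here, and your worry about ``losing information under $2$-divisibility'' is resolved precisely by this computation rather than by a sharper identification of the torsor.
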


In light of the discussion above, proving Theorem~\ref{theo:rationality} amounts to proving Proposition~\ref{prop:rationality}, which we do using intermediate Jacobian torsor obstructions developed by Hassett and Tschinkel in \cite{hassetttschinkelrationality} and by Benoist and Wittenberg in \cite{benoistwitt}.

\subsection{Intermediate Jacobian torsor obstructions}

Here, we give a brief account of the intermediate Jacobian of a threefold as defined in \cite{benoistwitt}, outlining the results used to prove Proposition~\ref{prop:rationality}.

\begin{theo}[\cite{benoistwitt} Theorem 3.1)]
Let $X$ be a smooth, projective, geometrically rational threefold over a field $k$. Then
\begin{enumerate}
    \item[(i)] there is a group scheme $\CH^2_X$ over $k$ whose $k$-points are Galois-invariant rational equivalence classes of codimension-2 cycles on $X_{\bar k}$,
    \item[(ii)] the identity component $(\CH^2_X)^0\subset\CH^2_X$ parametrizing algebraically trivial classes is an abelian variety over $k$, and
    \item[(iii)] the set of $\bar k$-points of the group scheme $\NS^2_X=\CH^2_X/(\CH^2_X)^0$ is isomorphic as a Galois module to $\NS^2(X_{\bar k})$, the set of algebraic equivalence classes of codimension-2 cycles on $X_{\bar k}$.
\end{enumerate}
\end{theo}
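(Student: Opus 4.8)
The plan is to build the objects over $\bar k$ first and then descend them to $k$, using that the whole construction is functorial for the semilinear action of $\Gal(\bar k/k)$ on $X_{\bar k}$.

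Over $\bar k$ the key input is that a geometrically rational threefold has a ``finite-dimensional'' theory of codimension-$2$ cycles. Since $X_{\bar k}$ is rational, its group of zero-cycles is universally trivial, so the Bloch--Srinivas decomposition of the diagonal applies; it shows that the subgroup $\CH^2(X_{\bar k})_{\mathrm{alg}}$ of algebraically trivial classes is exactly the group of $\bar k$-points of Murre's algebraic representative $\mathrm{Ab}^2(X_{\bar k})$, an abelian variety, while the quotient $\NS^2(X_{\bar k})=\CH^2(X_{\bar k})/\CH^2(X_{\bar k})_{\mathrm{alg}}$ is a finitely generated abelian group. (In characteristic zero one can see all of this very concretely: by weak factorization $X_{\bar k}$ is linked to $\bP^3_{\bar k}$ by blowups and blowdowns along smooth centers, so $\mathrm{Ab}^2(X_{\bar k})$ is a product of Jacobians of the curves blown up and $\NS^2(X_{\bar k})$ is the lattice spanned by the classes of the exceptional divisors and of a line.) A finitely generated abelian group is canonically the group of $\bar k$-points of an \'etale group scheme, and the preimage in $\CH^2(X_{\bar k})$ of each of its points is a torsor under $\mathrm{Ab}^2(X_{\bar k})$ that is itself algebraic, being parametrized by the connected components of the relevant Chow schemes. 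Gluing these torsors along the group law --- which is algebraic because sums of cycles move in families --- produces a group scheme $G_{\bar k}$ over $\bar k$, an extension of the \'etale ``N\'eron--Severi'' group scheme by $\mathrm{Ab}^2(X_{\bar k})$, with $G_{\bar k}(\bar k)=\CH^2(X_{\bar k})$.

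To descend, note that $\mathrm{Ab}^2(X_{\bar k})$, the \'etale N\'eron--Severi group scheme, and the extension $G_{\bar k}$ are all functorial for the action of correspondences, hence for $\Aut(X_{\bar k})$ and in particular for the semilinear automorphisms coming from $\Gal(\bar k/k)$; this endows each with a Galois descent datum, and the cocycle condition is automatic because the datum comes from a genuine continuous action on $X_{\bar k}$. These descent data are effective: for the abelian variety one descends the canonical principal polarization to obtain a $\Gal$-equivariant projective embedding, giving an abelian variety $(\CH^2_X)^0$ over $k$; the twisted lattice descends to an \'etale group scheme $\NS^2_X$ over $k$; and $G_{\bar k}$, glued from torsors inside a quasi-projective ambient space that themselves descend, descends to a group scheme $\CH^2_X$ over $k$ fitting in $0\to(\CH^2_X)^0\to\CH^2_X\to\NS^2_X\to0$. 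By construction the $\bar k$-points recover $\CH^2(X_{\bar k})$ with its Galois action, with the algebraically trivial classes forming $(\CH^2_X)^0(\bar k)$ and the quotient $\NS^2(X_{\bar k})=\NS^2_X(\bar k)$; taking Galois invariants yields the description of $k$-points in (i), and (ii), (iii) are then immediate.

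The main obstacle is the representability step over $\bar k$: one needs more than the mere existence of an algebraic representative (Murre), namely that \emph{every} algebraic-equivalence class of codimension-$2$ cycles is cut out by an algebraic torsor under $\mathrm{Ab}^2(X_{\bar k})$ and that these glue compatibly with addition of cycles --- equivalently, that a suitable functor of relative codimension-$2$ cycles modulo algebraic equivalence is representable. Rationality of $X$ is used essentially here (otherwise $\NS^2$ need not be finitely generated, and $\CH^2_{\mathrm{alg}}$ need not be an abelian variety at all). A secondary difficulty is that when $X$ has no $k$-point the Galois action may permute the connected components of $\CH^2_X$ nontrivially, so one genuinely works with (possibly disconnected) group schemes rather than with varieties --- exactly the phenomenon already visible for the torsor $T$ earlier in this paper.
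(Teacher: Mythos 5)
This statement is not proved in the paper at all: it is quoted verbatim (in paraphrased form) from Benoist--Wittenberg \cite{benoistwitt} as background, so there is no internal proof to compare against. Judged on its own terms, your sketch is the older Achter--Casalaina-Martin--Vial strategy --- build Murre's algebraic representative and the torsors $(\CH^2)^\alpha$ over $\bar k$, then Galois-descend --- and that is genuinely \emph{not} the route Benoist and Wittenberg take. Their construction is functorial from the start: they define $\CH^2_{X/k}$ directly over $k$ as (a sheafification of) the functor $T\mapsto H^2_{\mathrm{Zar}}(X_T,\mathcal K_2)$ built from the Gersten complex, and prove representability of that functor; no passage to $\bar k$ and no descent datum is involved. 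The difference matters for the theorem as stated, which allows an arbitrary field $k$: when $k$ is imperfect, $\bar k/k$ is not Galois, so ``Galois descent from $X_{\bar k}$'' is not available, and descending only from $X_{k^s}$ leaves the purely inseparable part untreated. Handling exactly this is one of the main points of \cite{benoistwitt}.

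Beyond that, the central step of your argument --- that the fibers of $\CH^2(X_{\bar k})\to\NS^2(X_{\bar k})$ are \emph{algebraic} torsors under the algebraic representative and that they glue into a group scheme compatibly with addition of cycles --- is asserted (and correctly flagged as ``the main obstacle'') rather than proved. This is precisely where all the work lies; ``parametrized by connected components of the relevant Chow schemes'' does not by itself produce a scheme structure on a rational-equivalence class, nor the algebraicity of the group law. So as written the proposal is an outline of a different (descent-based) proof valid over perfect fields, with its key representability step left open, rather than a proof of the cited theorem in the generality in which the paper uses it. (For the application in Section~\ref{sect:rationality} this is harmless, since the theorem is imported as a black box.)
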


For each $\alpha\in\NS^2_X(k)$, the preimage of $\alpha$ under the surjection $\CH^2_X\to\NS^2_X$ is a torsor of $(\CH^2_X)^0$ denoted $(\CH^2_X)^\alpha$.  The arithmetic of these torsors is strictly controlled if $X$ is $k$-rational:

\begin{theo}[\cite{benoistwitt} Theorem 3.11]\label{theo:bw311}
Let $X$ be as above, and suppose $X$ is $k$-rational. Then
\begin{enumerate}
    \item[(i)] there exists a smooth, projective curve $C$ over $k$ and an isomorphism $(\CH^2_X)^0\simeq\Pic^0_C$, and
    \item[(ii)] if $g(C)\ge2$, then for each $\alpha\in\NS^2_X(k)$ there is some $e_\alpha\in\bZ$ for which $[(\CH^2_X)^\alpha]=[\Pic^{e_\alpha}_C]$ in $H^1(k,\Pic^0_C)$.
\end{enumerate}
\end{theo}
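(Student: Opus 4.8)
The plan is to reduce the statement to the behavior of the group scheme $\CH^2$ under blowups along smooth centers, and then to propagate that behavior along a factorization of a birational map $X\dashrightarrow\bP^3_k$. Since $X$ is $k$-rational, I would first fix such a map and resolve it into a \emph{zigzag}: a smooth projective threefold $W$ over $k$ with birational morphisms $f\colon W\to X$ and $g\colon W\to\bP^3_k$, each a composition of blowups along smooth $k$-subvarieties, which in a threefold are points and curves. Producing a factorization through \emph{smooth} centers is the content of the weak factorization theorem (or, over an imperfect or positive-characteristic field, whatever substitute is available), and this is the step most sensitive to the nature of $k$.

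The second ingredient is a blowup formula at the level of group schemes over $k$. For a blowup $\pi\colon W'\to W$ of smooth projective threefolds: if the center is a $k$-point, then $\CH^2_{W'}\cong\CH^2_W\oplus\bZ$, the new summand generated by the class of a line in the exceptional plane, so $(\CH^2_{W'})^0\cong(\CH^2_W)^0$ and all torsor classes are unchanged; if the center is a smooth curve $D$, then $\CH^2_{W'}\cong\CH^2_W\oplus\Pic_D$, the new summand arising by pushing $\CH^1(D)$ forward through the exceptional $\bP^1$-bundle $E\to D$, so that $(\CH^2_{W'})^0\cong(\CH^2_W)^0\times\Pic^0_D$ and, under the projection to $\NS^2$, the torsor over $\pi^*\alpha$ twisted by $n$ copies of the relative $\cO_E(1)$ has class $[(\CH^2_W)^\alpha]+n[\Pic^1_D]$ in $H^1(k,(\CH^2_{W'})^0)$. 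Promoting the classical Chow-group formula on geometric fibers to an isomorphism of the representing group schemes over $k$ is a Galois-descent check, using the representability of $\CH^2$ recalled above.

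Running this along $f$ and $g$ and using that $\CH^2_{\bP^3}=\bZ$ has trivial identity component, I obtain $(\CH^2_W)^0\cong\prod_j\Pic^0_{D_j}$ from the $\bP^3$ side and $(\CH^2_W)^0\cong(\CH^2_X)^0\times\prod_i\Pic^0_{C_i}$ from the $X$ side, compatibly with the canonical principal polarizations. Hence $(\CH^2_X)^0$ is a direct factor, as a principally polarized abelian variety, of the Jacobian of the disjoint union $\coprod_j D_j$; by the uniqueness of the decomposition of a principally polarized abelian variety into indecomposable factors \cite{clemens} and the indecomposability of the Jacobian of a smooth connected curve, $(\CH^2_X)^0$ is itself a product of Jacobians of smooth connected curves, i.e.\ $(\CH^2_X)^0\cong\Pic^0_C$ for a smooth projective curve $C$; this is (i). For (ii), every codimension-$2$ class on $\bP^3_k$ is represented by a $k$-rational line, so every torsor $(\CH^2_{\bP^3})^\beta$ is trivial; propagating along $g$, every $[(\CH^2_W)^\beta]$ is a $\bZ$-combination of the $[\Pic^1_{D_j}]$. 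On the other hand, propagating $(\CH^2_X)^\alpha$ along $f$ shows that its class in $H^1(k,(\CH^2_W)^0)$ is the image of $[(\CH^2_X)^\alpha]\in H^1(k,\Pic^0_C)$ plus a $\bZ$-combination of the $[\Pic^1_{C_i}]$. Comparing the $\Pic^0_C$-components under the two product decompositions of $H^1(k,(\CH^2_W)^0)$ forces $[(\CH^2_X)^\alpha]$ into the cyclic subgroup $\{[\Pic^e_C]\}_{e\in\bZ}$, i.e.\ $[(\CH^2_X)^\alpha]=[\Pic^{e_\alpha}_C]$ for some $e_\alpha\in\bZ$; the hypothesis $g(C)\ge2$ is what makes $C$ rigid enough for this integer to be meaningful.

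The main obstacle is twofold. First, weak factorization along smooth centers is a deep theorem known only in characteristic $0$, so over an arbitrary field one must either restrict the characteristic or replace it by a weaker statement still controlling $\CH^2$ only up to products of Jacobians of curves and copies of $\bZ$. Second, even granting such a factorization, one must transport the principal polarization coherently through every product decomposition in order to upgrade ``direct factor of a product of Jacobians'' to a genuine isomorphism $(\CH^2_X)^0\cong\Pic^0_C$, and simultaneously transport the $\NS^2$-gradings and the $\cO_E(1)$-twists in order to pin down the torsor \emph{class} and not merely the torsor up to $\Pic^0_C$-translation. Keeping all of this coherent over a nonclosed field, with $\CH^2$ treated as a group scheme rather than an abstract group, is where the real effort lies.
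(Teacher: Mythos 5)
This statement is not proved in the paper at all: it is quoted directly from Benoist and Wittenberg \cite{benoistwitt}, so there is no in-paper argument to compare yours against. Measured against the source, your outline does capture the actual mechanism: the blowup formula $\CH^2_{W'}\simeq\CH^2_W\times\Pic_D$ (resp.\ $\CH^2_W\times\bZ$) for a smooth curve (resp.\ point) center, propagation along a factorization of a birational map $\bP^3\dashrightarrow X$, the Clemens--Griffiths uniqueness of the decomposition of a principally polarized abelian variety into indecomposable factors to extract $\Pic^0_C$, and bookkeeping of the $\NS^2$-graded torsors to get (ii).

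There is, however, a genuine gap exactly where you flag your ``main obstacle,'' and it is not merely a technicality: you require \emph{both} arrows $W\to X$ and $W\to\bP^3$ to be compositions of blowups along smooth centers, i.e.\ weak factorization, which is known only in characteristic $0$ and would defeat the point of a statement over an arbitrary field (the present paper uses the theorem in any characteristic $\neq 2$). The argument in \cite{benoistwitt} does not need this symmetry: one resolves the indeterminacy of $\bP^3\dashrightarrow X$ by blowing up points and regular one-dimensional centers on the $\bP^3$ side only, so that $g\colon W\to\bP^3$ is a composition of such blowups and $\CH^2_W$ is computed by the blowup formula, while $f\colon W\to X$ is merely \emph{some} birational morphism of smooth projective threefolds. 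The ingredient missing from your write-up is that for such an $f$ the maps $f^*$ and $f_*$ (with $f_*f^*=\mathrm{id}$) exhibit $(\CH^2_X)^0$ as a principally polarized direct factor of $(\CH^2_W)^0$ and identify the corresponding torsors over matching classes in $\NS^2$; this is what lets the Clemens--Griffiths decomposition argument and the torsor comparison in (ii) run without ever factoring $f$ into smooth blowups. A secondary point: over an imperfect field the one-dimensional centers need only be regular rather than smooth, which is precisely why the surrounding text of this paper adds the caveat ``as long as they are all smooth (for example, if $k$ is perfect).''
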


Both theorems presented above have stronger statements in \cite{benoistwitt}, but we only use the above versions. Any birational equivalence between $X$ and $\bP^3$ factors as a sequence of blowups along closed points and along regular curves, and the curve $C$ in Theorem~\ref{theo:bw311} is the union of the blown up curves, as long as they are all smooth (for example, if $k$ is perfect). Indeed, if $X_1$ is the blowup of $X$ along a smooth curve $Z$, then
\[
\CH^2_{X_1}\simeq\CH^2_X\times\Pic_Z
\]
whereas if $X_2$ is the blowup of $X$ at a point, then
\[
\CH^2_{X_2}\simeq\CH^2_X\times\bZ
\]
by \cite[Thm. 3.10]{benoistwitt}.

\subsection{Rationality of cubic threefolds containing a plane}

We return to a general cubic threefold $Y\subset\bP^4$ containing a plane $P$ which is smooth away from $P$. The threefold $\cov Y=\Bl_PY$ is smooth by Lemma~\ref{lemm:singthreefold}, and we will use intermediate Jacobian torsor obstructions on $\cov Y$ to study rationality of $Y$.

\begin{lemm}\label{lemm:ch2}
Let $q:X\to\bP^1$ be a quadric surface fibration with reduced discriminant $\Delta$ whose fibers are no worse than cones. Let $D$ be the double cover of $\bP^1$ branched over $\Delta$ parametrizing rulings in fibers of $q$. Then $(\CH^2_X)^0\simeq\Pic^0_D$. 
\end{lemm}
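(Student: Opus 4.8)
The plan is to produce a morphism of group schemes $\phi\colon\Pic^0_D\to(\CH^2_X)^0$ over $k$, coming from the universal family of lines in the fibres of $q$, and to check that it is an isomorphism after base change to $\bar k$; since $\phi$ is defined over $k$, this gives the lemma. Two preliminary observations: first, $X$ is smooth, by the local computation underlying Lemma~\ref{lemm:singthreefold}(v), using that $\Delta$ is reduced and the fibres of $q$ are no worse than cones; second, $X$ is geometrically rational, since the generic fibre of $q$ is a smooth quadric surface over the $C_1$ field $\bar k(\bP^1)$, hence has a rational point and is rational. By the theorem of Benoist--Wittenberg quoted above, $(\CH^2_X)^0$ is therefore an abelian variety over $k$.

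To construct $\phi$, let $\cF=F(X/\bP^1)$, which is a smooth surface under our hypotheses, and write its Stein factorisation as $\cF\xrightarrow{p}D\xrightarrow{g}\bP^1$, so that $p$ is the conic bundle of the statement. Since the fibres of $p$ are geometrically $\bP^1$'s, a conic bundle over a curve has the same Albanese as its base, so the induced map $p_*\colon\operatorname{Alb}(\cF)\to\operatorname{Alb}(D)=\Pic^0_D$ is an isomorphism. The universal line $\Lambda\subset\cF\times X$ is a codimension-two cycle, flat over $\cF$ with fibres the lines it parametrises; the associated correspondence $\Lambda_*\colon\CH_0(\cF)\to\CH_1(X)=\CH^2(X)$ sends $[\ell]$ to the class of the line $\ell$ in $\CH^2(X)$ and takes algebraically trivial $0$-cycles to algebraically trivial $1$-cycles, hence induces a $k$-morphism $\operatorname{Alb}(\cF)\to(\CH^2_X)^0$. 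Composing with $(p_*)^{-1}$ yields $\phi$.

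To see that $\phi$ is an isomorphism over $\bar k$, I would first compute $\dim(\CH^2_X)^0$. Over $\bar k$ the quadric bundle $q$ admits a section $s$ meeting each fibre in a smooth point (as $\bar k(\bP^1)$ is $C_1$ and the cone points are finite); projecting each fibre from $s$ and resolving exhibits $X_{\bar k}$, through an explicit chain of blow-ups and blow-downs along smooth centres, as birational to a $\bP^2$-bundle over $\bP^1$, the only blown-up curve of positive genus being $D$ itself. By \cite[Thm.~3.10]{benoistwitt} this already gives $(\CH^2_{X_{\bar k}})^0\simeq\Pic^0_{D_{\bar k}}$, so in particular $\dim(\CH^2_X)^0=g(D)=\dim\Pic^0_D$. (Over $\bC$ the dimension is visible more directly, since the Leray spectral sequence of $q$ yields an isomorphism of Hodge structures $H^3(X)\cong H^1(D)(-1)$.) It then remains to show that the $k$-morphism $\phi$, between abelian varieties of equal dimension, is an isomorphism and not merely an isogeny --- equivalently, that it is injective --- and this is the step I expect to require the most care. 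Over $\bC$ one knows the intermediate Jacobian of such a quadric surface bundle is $\Pic^0_D$ \emph{as a principally polarised abelian variety}, via the Abel--Jacobi map, which forces $\phi$ to be an isomorphism.

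The main obstacle, then, is controlling the kernel of $\phi$ in arbitrary characteristic: injectivity on $\bar k$-points would not suffice, since the kernel of an isogeny can be infinitesimal, so one must show $\phi$ is a monomorphism of group schemes. I would attempt this either by a polarisation argument --- checking that $\phi$ pulls the natural polarisation of $(\CH^2_X)^0$ back to the principal polarisation of $\Pic^0_D$, and descending the complex statement above --- or, failing that, by promoting the $\bar k$-isomorphism produced by the blow-up construction to a $k$-isomorphism, which is exactly the Galois equivariance that the correspondence $\phi$ is designed to supply; alternatively one can cite the existing literature on intermediate Jacobians of quadric bundles.
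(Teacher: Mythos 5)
Your proposal follows essentially the same route as the paper. The paper defines the cylinder homomorphism $\Pic^0_D\to(\CH^2_X)^0$, $\sum_i a_i(d_i)\mapsto\sum_i a_i[M_{d_i}]$ (which is your $\phi$, unwound from the Albanese of $\cF$: a point of $D$ lifts to a line in the corresponding ruling, and any two lifts are algebraically equivalent), asserts that this is an embedding, and reduces to the dimension count $\dim(\CH^2_X)^0=\dim\Pic^0_D$, obtained over $\bar k$ exactly as you do --- choose a section $\sigma$ of $q$, observe $\Bl_D(\bP^1\times\bP^2)\simeq\Bl_{\sigma(\bP^1)}X$, and apply the Benoist--Wittenberg blowup formula to get $\CH^2_{\bP^1\times\bP^2}\times\Pic_D\simeq\CH^2_X\times\Pic_{\bP^1}$.

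The one step you explicitly leave open --- that $\phi$ is a monomorphism of group schemes rather than merely an isogeny --- is precisely the step the paper dispatches with the single word ``embedding,'' so you have not lost anything relative to the paper's argument; and your observation that injectivity on $\bar k$-points would not suffice in positive characteristic is correct. The cleanest way to close it is implicit in the blowup computation you already perform: under the identification $\Bl_D(\bP^1\times\bP^2)\simeq\Bl_{\sigma(\bP^1)}X$, a general line in the ruling $d$ of a fibre of $q$ corresponds to a line in the fibre $\bP^2$ through the blown-up point $d\in D$, so in the decomposition $\CH^2_X\times\Pic_{\bP^1}\simeq\CH^2_{\bP^1\times\bP^2}\times\Pic_D$ the class $[M_d]$ has $\Pic_D$-component $(d)$. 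Hence the composite of $\phi$ with the projection onto the factor $\Pic^0_D$ is the identity, so $\phi$ is a closed immersion over $\bar k$, hence over $k$ since it is defined over $k$. With that, the polarisation and Hodge-theoretic detours in your last paragraph become unnecessary.
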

\begin{proof}
For each $c\in D$ let 
\[
[M_d]\in\CH^2(X_{\bar k})^{\Gal(\bar k/k)}=\CH^2_X(k)
\]
be the rational equivalence class of a line in the corresponding ruling of a fiber of $q$. Then there is an embedding
\[
\Pic^0_D\to(\CH^2_X)^0,\;\;\sum_{i}a_i[d_i]\mapsto\sum_{i}a_i[M_{d_i}].
\]
To show that this embedding is an isomorphism, it remains to check that $\dim(\CH^2_X)^0=\dim\Pic^0_D$, for which we may extend scalars to $\bar k$ and fix a section $\sigma:\bP^1\to X$ of $q$. Generalizing the fact that the blowup of a quadric surface at a point is isomorphic to the blowup of $\bP^2$ at a pair of points (which coincide if the quadric is singular), we obtain 
\[
\Bl_D(\bP^1\times\bP^2)\simeq\Bl_{\sigma(\bP^1)}X.
\]
Applying the blowup formula \cite[Prop. 3.10]{benoistwitt} yields 
\[
\CH^2_{\bP^1\times\bP^2}\times\Pic_D\simeq\CH^2_X\times\Pic_{\bP^1},
\]
from which we deduce $\dim\CH^2_X=\dim\Pic^0_D$, as needed.
\end{proof}

In our situation, we obtain $\CH^2_{\cov Y}\simeq\Pic^0_C$.

\begin{proof}[Proof of Proposition~\ref{prop:rationality}]
If $T(k)\neq\emptyset$, we have already seen $\cov Y$ is $k$-rational. So, suppose $\cov Y$ is $k$-rational, and let $\gamma\in\NS^2_{\cov Y}(k)$ be the algebraic equivalence class of a line in $Y\setminus P$. Any two lines in $Y\setminus P$ are rationally inequivalent on $\cov Y$, for a rational equivalence between two lines determines a morphism $\bP^1\to \cU\subset T$, which must be constant since $T$ is geometrically an abelian surface. Hence there is an embedding $\cU\to(\CH^2_{\cov Y})^\gamma$, where $\cU\subset F(Y)$ parametrizes the lines in $Y\setminus P$. As $\cU$ is an open subscheme of $T$, and any rational map between abelian varieties (or their torsors) extends to a morphism, $T$ embeds in $(\CH^2_{\cov Y})^\gamma$. By Lemma~\ref{lemm:ch2}, $\dim((\CH^2_{\cov Y})^\gamma)=2$, so this embedding is an isomorphism.

From Lemma~\ref{lemm:ch2}, $(\CH^2_{\cov Y})^0\simeq\Pic^0_C$, and under the assumption that $\cov Y$ is rational, Theorem~\ref{theo:bw311} affords some $e\in\bZ$ for which
\[
[T]=[(\CH^2_{\cov Y})^\gamma]=[\Pic^e_C]\in H^1(k,\Pic^0_C).
\]
Since $C$ is hyperelliptic, $2[\Pic^e_C]=0$ for all $e$, and applying Corollary~\ref{coro:wcrelations}, 
\[
[\Pic^1_C]=2[T]=2[\Pic^e_C]=0\in H^1(k,\Pic^0_C).
\]
Hence $[\Pic^i_C]=0$ for all $i$, and in particular $[T]=0$, which means $T$ has a $k$-point.
\end{proof}

\begin{coro}
If $0\neq[\Pic^1_C]\in H^1(k,\Pic^0_C)$, then $\cov Y$ is $k$-irrational.
\end{coro}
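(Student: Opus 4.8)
The plan is to derive this as the contrapositive of the argument already carried out in the proof of Proposition~\ref{prop:rationality}, so essentially no new input is required. First I would note that $\cov Y=\Bl_PY$ is birational to $Y$, so $\cov Y$ is $k$-rational if and only if $Y$ is; hence it suffices to show $Y$ is not $k$-rational under the stated hypothesis. The standing assumptions of this section (that $Y$ is general and smooth away from $P$, so that $C$ is smooth by Lemma~\ref{lemm:singthreefold} and $T$ is a $\Pic^0_C$-torsor) are exactly those needed to invoke Proposition~\ref{prop:rationality} and Corollary~\ref{coro:wcrelations}.

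Next, suppose for contradiction that $Y$ is $k$-rational. Then Proposition~\ref{prop:rationality} gives $T(k)\neq\emptyset$, so $[T]=0$ in $H^1(k,\Pic^0_C)$. Applying the relation $2[T]=[\Pic^1_C]$ from Corollary~\ref{coro:wcrelations} yields $[\Pic^1_C]=2[T]=0$, contradicting the hypothesis $0\neq[\Pic^1_C]$. Alternatively, one can bypass Proposition~\ref{prop:rationality} and reread its proof directly: the chain of equalities there shows that $k$-rationality of $\cov Y$ forces $[\Pic^1_C]=2[T]=2[\Pic^e_C]=0$ for the relevant $e\in\bZ$, using that $C$ is hyperelliptic so $2[\Pic^e_C]=0$.

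There is no genuine obstacle here; the corollary is a direct logical consequence of results already in hand. The only points meriting a moment's care are the reduction between rationality of $Y$ and of $\cov Y$ (immediate, since they are birational) and checking that the hypotheses of this section do place us within the scope of Proposition~\ref{prop:rationality} and Corollary~\ref{coro:wcrelations}, which they do.
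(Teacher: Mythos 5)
Your proof is correct and is essentially the paper's argument: the paper deduces from Corollary~\ref{coro:wcrelations} that $[\Pic^1_C]\neq 0$ forces $[T]\neq 0$, hence $T(k)=\emptyset$, and then applies Proposition~\ref{prop:rationality}; you run the identical implication in contrapositive form. No issues.
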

\begin{proof}
If $\Pic^1_C$ is nontrivial, then so is $T$ by Corollary~\ref{coro:wcrelations}, and we apply Proposition~\ref{prop:rationality}.
\end{proof}

\begin{coro}\label{coro:nosection}
If $Y$ contains a quadric surface $Q$ over $k\subset\bC$ for which $Q(k)=\emptyset$, then $\cov Y$ is irrational over $k$.
\end{coro}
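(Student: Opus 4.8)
The plan is to deduce this from Proposition~\ref{prop:rationality}. If $\cov Y$ were $k$-rational, then $T(k)\neq\emptyset$, and by Corollary~\ref{coro:tpoints} this forces $Y$ to contain either a $k$-rational node or a $k$-rational line $L\subset Y\setminus P$; I would then show that either possibility produces a $k$-point on $Q$, contradicting $Q(k)=\emptyset$.

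First I would identify $Q$ with a fiber of the quadric surface fibration. Since $Y$ contains only the plane $P$ it contains no reducible quadric, so $Q$ is irreducible of rank $3$ or $4$, and its strict transform in $\cov Y=\Bl_PY$ is a single fiber $q^{-1}(c_0)$ of $q\colon\cov Y\to\bP^1$; as $Q$ and $q$ are defined over $k$, the base point satisfies $c_0\in\bP^1(k)$. (It is worth recording that $Q(k)=\emptyset$ already forces $Q$ to be smooth, since a rank-$3$ quadric cone has a $k$-rational vertex; the hypothesis is really about smooth quadric surfaces over $k$ with no $k$-point.)

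In the node case, Lemma~\ref{lemm:singthreefold}(ii) says every quadric surface in $Y$ contains $Z=\Sing(Y)\cap P$, and since $\Sing(Y)\subset P$ in the setting of this section a $k$-rational node of $Y$ is a $k$-point of $Z\subset Q$, so $Q(k)\neq\emptyset$. In the line case, I would invoke the fact from Section~\ref{sect:threefold} that a line $L\subset Y\setminus P$ determines a section of $q$, note that this construction is Galois-equivariant so the section is defined over $k$ when $L$ is, and evaluate it at $c_0\in\bP^1(k)$ to get a $k$-point of $q^{-1}(c_0)$; its image under the birational morphism $q^{-1}(c_0)\to Q$ is then a $k$-point of $Q$.

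I do not expect a genuine obstacle here, since the substance is already contained in Proposition~\ref{prop:rationality}; the work is bookkeeping about fields of definition. The delicate points are: that the section of $q$ attached to a $k$-rational line is a $k$-morphism and not merely a $k$-rational section --- concretely, its value over $c_0$ is the intersection point of the unique lines in the two rulings of $q^{-1}(c_0)$ meeting $L$, and although a Galois element may interchange those two lines, their intersection is a single point and hence Galois-fixed; that evaluating this section at a $k$-point of $\bP^1$ and pushing forward to $Q$ lands in $Q(k)$ even when it meets the exceptional locus of $q^{-1}(c_0)\to Q$; and the precise translation, via Corollary~\ref{coro:tpoints}, of ``$T(k)\neq\emptyset$'' into the existence of a $k$-rational node or a $k$-rational line in $Y\setminus P$.
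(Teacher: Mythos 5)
Your proposal is correct and follows essentially the same route as the paper: the paper argues that $Q(k)=\emptyset$ rules out any $k$-section of the quadric fibration $q$, hence $T(k)=\emptyset$, and then applies Proposition~\ref{prop:rationality}; you run the same chain contrapositively, using Corollary~\ref{coro:tpoints} to turn a $k$-point of $T$ into a node or a line and then into a $k$-point of $Q$. The extra bookkeeping you supply (identifying $Q$ with a fiber over a $k$-point of $\bP^1$, and checking the section attached to a $k$-rational line is defined over $k$) is sound and just fills in what the paper leaves implicit.
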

\begin{proof}
Since $Q(k)=\emptyset$, there can be no section of the quadric surface fibration $q$, so also $T(k)=\emptyset$, and we apply Proposition~\ref{prop:rationality}.
\end{proof}

\section{Fibrations from Fano varieties of cubic fourfolds containing a plane}\label{sect:lagrangian}

As mentioned in the introduction, if $X$ is a smooth complex cubic fourfold, then its Fano variety of lines $F$ is a hyperk\"ahler fourfold of K3$^{[2]}$ type. \cite{beauville}. As explained below, $F$ rarely admits any interesting fiber structures, but when $X$ contains a plane, $F$ is birational to another hyperk\"ahler fourfold admitting a Lagrangian fibration. We describe this Lagrangian fibration explicitly and find that the construction works over an arbitrary field of characteristic different from $2$.

\subsection{Hyperk\"ahler motivation}
First, we outline some hyperk\"ahler geometry motivating the construction later in this section. If $M$ is a hyperk\"ahler $2n$-fold, $B$ is a smooth variety, and $f:M\to B$ is a nontrivial surjective morphism with connected, positive-dimensional fibers, then the following must be true:
\begin{enumerate}
    \item[(i)] $B\simeq\bP^n$, proved in \cite{hwang}, and
    \item[(ii)] the restriction of $\sigma$ to any fiber of $f$ is zero, and the fibers of $f$ are abelian varieties, proved in \cite{matsushitafiber}.
\end{enumerate}
Such morphisms are called Lagrangian fibrations. If $M$ is birational to another hyperk\"ahler variety $N$ admitting a Lagrangian fibration, then the composition $M\dashrightarrow N\to\bP^n$ is called a rational Lagrangian fibration. A hyperk\"ahler manifold $M$ of K3$^{[n]}$-type admits a rational Lagrangian fibration if and only if there is some nontrivial class $D\in H^2(M,\bZ)$ belonging to the birational K\"ahler cone and isotropic under the Beauville-Bogomolov form \cite{matsushitadivisor}. By results from \cite[Sect. 6]{markmantorelli}, the existence of any nontrivial isotropic integral divisor class implies the existence of one in the birational K\"ahler cone (for further discussion, see \cite[Cor. 7.3]{mongardi}).

Let $X\subset\bP^5$ be a smooth, complex cubic fourfold, and let $F$ be the Fano variety of lines on $X$, a hyperk\"ahler fourfold on $K3^{[2]}$-type. The Abel-Jacobi map $\alpha\colon H^4(X,\bZ)\to H^2(F,\bZ)$ is compatible with the quadratic forms on each group (namely, the intersection form and Beauville-Bogomolov form) after restriction to primitive cohomology in the sense that $x.y=-q_F(\alpha(x),\alpha(y))$ for $x,y\in H^4(X,\bZ)_{\mathrm{prim}}$. Using this compatibility, it is not difficult to characterize when $X$ is birational to another hyperk\"ahler fourfold $M$ admitting a Lagrangian fibration, as briefly explained below.

Under a framework outlined by Hassett in \cite{hassett}, a cubic fourfold $X$ is called \emph{special} if $\rank H^{2,2}(X,\bZ)>1$, a \emph{labeling} of a special cubic fourfold $X$ is a choice of rank-$2$ primitive sublattice $K\subset H^{2,2}(X,\bZ)$ containing the square of the hyperplane class, and the \emph{discriminant} of $X$ is the discriminant of the intersection form on $K$. The moduli space $\cC_d$ of cubic fourfolds of discriminant $d$ is an irreducible divisor in $\cC$ when $d\ge8$ and $d=0,2\mod 6$. Otherwise, $\cC_d$ is empty. As an example, the space $\cC_8$ parametrizes cubic fourfolds containing at least one plane.

The following fact appears in \cite[Cor. 5.24]{huybrechtscubic}, characterizing when $F$ admits a rational Lagrangian fibration. For a complete proof, see \cite[Prop. 1.4.2]{brookethesis}

\begin{prop}
Let $X\subset\bP^5$ be a smooth, complex cubic fourfold and $F$ its Fano variety of lines. Then $F$ admits a rational Lagrangian fibration if and only if $X$ is special of discriminant $d=2n^2$ for some integer $n$.
\end{prop}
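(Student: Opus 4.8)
The plan is to reduce the statement to a rank-two lattice computation. Combining Matsushita's criterion with the strengthening quoted above, $F$ admits a rational Lagrangian fibration if and only if $\NS(F)=H^2(F,\bZ)\cap H^{1,1}(F)$ contains a nonzero class $D$ with $q_F(D)=0$. I would study $\NS(F)$ through the Abel--Jacobi map: recall from Beauville--Donagi that $\alpha$ is an isomorphism of Hodge structures sending $h_X^2$ to the Plücker class $g$, that $q_F(g)=6$, and that it restricts (up to sign) to a Hodge isometry $H^4(X,\bZ)_{\mathrm{prim}}(-1)\xrightarrow{\ \sim\ }g^\perp\subset H^2(F,\bZ)$. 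Hence $\NS(F)=\alpha\bigl(H^{2,2}(X,\bZ)\bigr)$, and over $\bQ$ there is an orthogonal splitting $\NS(F)\otimes\bQ=\bQ g\oplus\bigl(H^{2,2}(X,\bQ)_{\mathrm{prim}}\bigr)(-1)$ with $q_F|_{\bQ g}=\langle 6\rangle$.

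First I would dispatch the non-special case: if $\rank H^{2,2}(X,\bZ)=1$ then $\NS(F)=\bZ g$ is rank one with $q_F(g)=6\neq 0$, so it has no nonzero isotropic class and $F$ admits no rational Lagrangian fibration; this is one implication, so from now on assume $X$ is special. Next I would reduce to labelings: a nonzero isotropic $D\in\NS(F)$ exists if and only if $X$ has a labeling $K\subset H^{2,2}(X,\bZ)$ of some discriminant $d$ for which the binary quadratic space $\alpha(K)\otimes\bQ$ represents zero nontrivially. Indeed, given $D$ the span $\langle g,D\rangle$ is rank two (since $q_F(D)=0\neq q_F(g)$ forces $D\notin\bQ g$), and its saturation in $\NS(F)$ pulls back under $\alpha$ to a labeling containing $h_X^2$; conversely $\alpha(K)\otimes\bQ\subseteq\NS(F)\otimes\bQ$, and a nonzero rational isotropic vector there becomes an integral one after clearing denominators.

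The core step is to determine, in terms of $d$, when $\alpha(K)\otimes\bQ$ represents zero. Splitting off $h_X^2$ gives $\alpha(K)\otimes\bQ\cong\langle 6\rangle\oplus\langle -\tau^2\rangle$, where $\tau$ generates the rank-one lattice $K\cap(h_X^2)^\perp$; a direct computation using $(h_X^2)^2=3$ and $\det K=d$ gives $\tau^2=3d$ when $3\nmid d$ and $\tau^2=d/3$ when $3\mid d$. A nondegenerate binary form $\langle a\rangle\oplus\langle -b\rangle$ represents zero nontrivially over $\bQ$ exactly when $ab$ is a square in $\bQ^\times$; here $ab=6\tau^2$, which in both cases lies in $2d\cdot(\bQ^\times)^2$, so the condition is that $2d$ be a perfect square, i.e. $d=2n^2$ for some integer $n$. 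Assembling the three steps, $F$ admits a rational Lagrangian fibration if and only if $X$ carries a labeling of discriminant $2n^2$; note that such a $d=2n^2$ automatically satisfies $d\geq 8$ and $d\equiv 0,2\pmod 6$ once $n\geq 2$, so $\cC_{2n^2}$ is a genuine divisor.

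The main obstacle I expect is the bookkeeping in the third step: one must correctly track the factor $3$ between $(h_X^2)^2=3$ on $X$ and $q_F(g)=6$ on $F$, together with the index $[\,\bZ h_X^2\oplus(K\cap(h_X^2)^\perp):K\,]\in\{1,3\}$, since these determine the exact value of $\tau^2$. The saving grace is that the final invariant $6\tau^2$ is insensitive to the $3\mid d$ dichotomy, which is precisely why the uniform criterion $d=2n^2$ emerges. Everything else is either imported (the Lagrangian-fibration criterion, the Beauville--Donagi description of $\alpha$) or an elementary manipulation of binary quadratic forms.
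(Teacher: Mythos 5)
Your proposal is correct, and it is essentially the argument behind the result the paper cites rather than proves: the paper defers to \cite[Cor.~5.24]{huybrechtscubic} and \cite[Prop.~1.4.2]{brookethesis}, which run the same reduction — Matsushita/Markman to a nonzero isotropic class in $\NS(F)$, Beauville--Donagi to transport the question to a labeling $K\ni h_X^2$, and the rank-two computation showing $\langle 6\rangle\oplus\langle-\tau^2\rangle$ represents zero exactly when $2d$ is a square. Your bookkeeping of the index $[\bZ h_X^2\oplus\bZ\tau:K]\in\{1,3\}$ and the observation that $6\tau^2=2de^2$ is square-equivalent to $2d$ in every case are exactly the points that make the uniform criterion $d=2n^2$ come out, so there is no gap.
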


The first example is $d=8$ when $X$ contains a plane $P$, and in that case, the isotropic class on $F$ under the Beauville-Bogomolov form is $\alpha(Q)$ where $Q\subset X$ is a quadric surface residual to $P$. The next section contains an explicit construction of the rational Lagrangian fibration $F\dashrightarrow\bP^2$ induced by the complete linear system of $\alpha(Q)$; moreover, this construction works over an arbitrary field of characteristic different from $2$.

It is worth noting that over $\bC$, one can give an alternate description of the rational Lagrangian fibration by using a result from \cite{macri} that $F$ is birational to a moduli space of twisted torsion sheaves on a K3 surface supported on curves in a $2$-dimensional linear system. From this perspective, the rational Lagrangian fibration sends a line to the curve on which the corresponding twisted sheaf is supported. For details on the compatibility of these two constructions, see \cite[Lemma 3.3.2]{brookethesis}

\subsection{The rational Lagrangian fibration}
Over a field $k$ of characteristic not $2$, let $X$ be a general cubic fourfold containing a plane $P$, by which we mean that $X$ is smooth and contains no other plane meeting $P$. Projection from $P$ induces a quadric surface fibration $q\colon \Bl_PX\to P^\perp$ whose singular fibers lie over a sextic curve $\Delta\subset P^\perp$ called the discriminant of $q$. Note that when $\chara(k)=2$, the curve $\Delta$ is instead cubic; for details, see \cite[Sect. 4.1]{hassetttransobs}. The assumption that $X$ contains no plane meeting $P$ guarantees that the singular fibers of $q$ are no worse than cones and, by Lemma 2 in \cite{voisin}, that $\Delta$ is smooth. 

Let $F$ be the Fano variety of lines on $X$, a smooth fourfold. The formula $\pi([L])=[q(L)]$ defines a rational map $F\dashrightarrow(P^\perp)^*$ whose domain contains the open subscheme of lines in $X\setminus P$. We will show that the rational map $\pi$ fits in the commutative square
\begin{center}
\begin{tikzcd}
    \Bl_{P^*}F \arrow[d] \arrow[r] & M \arrow[d,"\rho"] \\
    F \arrow[r,dashed,"\pi"] & \bP^2
\end{tikzcd}
\end{center}
where $M$ is the Mukai flop of $F$ along $P^*$ and $\rho$ is fibered in torsors of abelian surfaces, proving Theorem~\ref{theo:lagrangian}.

Before proving that $\pi$ is rational Lagrangian, it is worth explaining the relationship between (closures of) fibers of $\pi$ and the content of Section~\ref{sect:threefold}. If $L\subset P^\perp$, then $\pi^{-1}([L])\subset F$ contains all the lines in $(X\setminus P)\cap H$ where $H$ is the hyperplane spanned by $P$ and $L$. In the language of Theorem~\ref{theo:fanothree}, $\pi^{-1}([L])$ is the subscheme $\cU$ of the Fano scheme of lines on $Y=X\cap H$. 

We proceed as follows: first, we verify that $\pi$ extends to $F\setminus P^*$; second, we show that the blowup $\Bl_{P^*}F$ resolves the indeterminacy of $\pi$; finally, we show that $\pi$ factors through the contraction of the exceptional divisor 
\[
E=\{([L],x)\;|\;x\in L\subset P\}\subset \Bl_{P^*}F
\]
via the second projection. From this, we obtain a Lagrangian fibration from the Mukai flop of $F$ along $P^*$ whose fibers exhibit the arithmetic of the torsors described in Section~\ref{sect:threefold}.

\begin{lemm}\label{lemm:uniquehyp}
    For each $x\in P$, there is a unique hyperplane $H\supset P$ for which $X\cap H$ is singular at $x$.
\end{lemm}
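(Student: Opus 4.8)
The plan is to work in explicit coordinates. Fix coordinates on $\bP^5$ so that $P=\{x_0=x_1=x_2=0\}$ and write the defining cubic of $X$ as $x_0Q_0+x_1Q_1+x_2Q_2=0$, where $Q_0,Q_1,Q_2$ are quadratic forms in $x_0,\dots,x_5$. A hyperplane $H\supset P$ has the form $H_{(a:b:c)}=\{ax_0+bx_1+cx_2=0\}$ for $(a:b:c)\in(P^\perp)^*\simeq\bP^2$, so hyperplanes containing $P$ are parametrized by a $\bP^2$. For a point $x\in P$, I want to show the locus of $(a:b:c)$ for which $X\cap H_{(a:b:c)}$ is singular at $x$ is a single reduced point.

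The key computation is the following. Since $x\in P\subset X\cap H$ automatically, the threefold $Y=X\cap H_{(a:b:c)}$ is singular at $x$ precisely when the differential of its defining equation vanishes at $x$ modulo the differential of the linear form cutting out $H$. Because $x\in P$ we have $x_0(x)=x_1(x)=x_2(x)=0$, so the partials of $x_0Q_0+x_1Q_1+x_2Q_2$ at $x$ in the directions $\partial_{x_0},\partial_{x_1},\partial_{x_2}$ are $Q_0(x),Q_1(x),Q_2(x)$, while the partials in the directions $\partial_{x_3},\partial_{x_4},\partial_{x_5}$ all vanish at $x$ (each term still carries a factor $x_i$, $i\le 2$, which vanishes at $x$). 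Hence the gradient of the cubic at $x$ is $(Q_0(x):Q_1(x):Q_2(x):0:0:0)$, a vector supported on the first three coordinates. The gradient of $ax_0+bx_1+cx_2$ is $(a:b:c:0:0:0)$. Therefore $Y$ is singular at $x$ iff $(Q_0(x):Q_1(x):Q_2(x))$ is proportional to $(a:b:c)$. Since $X$ is smooth, $(Q_0(x):Q_1(x):Q_2(x))$ is never the zero vector for $x\in P$ (otherwise $X$ would be singular at $x$), so it determines a well-defined point of $\bP^2$, and this is the unique $(a:b:c)$ we seek. I would phrase this as: the map $x\mapsto(Q_0(x):Q_1(x):Q_2(x))$ is a morphism $P\to(P^\perp)^*$, and its value at $x$ is the unique $H$ singular at $x$.

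I expect the only subtlety — the "hard part," though it is mild — is justifying that the vanishing of the gradient modulo the hyperplane's differential is exactly the condition for $Y=X\cap H$ to be singular at $x$, i.e. correctly handling the Jacobian criterion for the hyperplane section. This is standard: for a smooth ambient $X$ and a hyperplane $H$, a point $x\in X\cap H$ is a singular point of $X\cap H$ iff $T_xH\supseteq T_xX$, equivalently iff the differential of the hyperplane form is in the span of the differential of the cubic at $x$ (here both differentials are nonzero since $X$ is smooth, so this is a rank condition), which unwinds to the proportionality above. I would also remark that the computation simultaneously shows $X\cap H$ has at most isolated singularities along $P$ and connects to the discriminant sextic $\Delta$, since $x\in P$ lies in a singular fiber exactly when the pencil of quadrics degenerates there — but for the lemma itself, the proportionality statement is all that is needed. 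A one-line closing remark should note that this morphism $P\to(P^\perp)^*$ is what later identifies the base of $\rho$ with $\bP^2$ and each $X\cap H$ with a general cubic threefold in the sense of Section~\ref{sect:threefold}, via Lemma~\ref{lemm:singthreefold}.
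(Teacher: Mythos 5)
Your proof is correct and is essentially the coordinate version of the paper's argument: both reduce to the observation that $X\cap H$ is singular at $x$ if and only if $H=T_xX$, together with the fact that $T_xX\supset P$ for $x\in P$ (which in your setup is the statement that the gradient $(Q_0(x):Q_1(x):Q_2(x):0:0:0)$ is supported on the coordinates cutting out $P$). The explicit morphism $x\mapsto(Q_0(x):Q_1(x):Q_2(x))$ you write down is exactly the map $g$ the paper uses in the next lemma.
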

\begin{proof}
    Since $X$ is a hypersurface, $X\cap H$ is singular at $x$ if and only if $H=T_xH$. When $x\in P$, one also has $T_xP\supset P$.
\end{proof}

\begin{lemm}\label{lemm:4to1}
    For each hyperplane $H\supset P$, the singular locus of $X\cap H$ along $P$ is zero-dimensional of length four.
\end{lemm}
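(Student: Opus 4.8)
The plan is to compute the singular locus directly in coordinates, mirroring the proof of Lemma~\ref{lemm:singthreefold}. Fix coordinates on $\bP^5$ so that $P=\{x_0=x_1=0\}$, write the equation of $X$ as $x_0Q_0+x_1Q_1=0$ with $Q_0,Q_1$ quadrics in $x_0,\dots,x_5$, and let $H=\{sx_1-tx_0=0\}$ for $(s:t)\in\bP^1=(P^\perp)^*$ be a hyperplane containing $P$. On $H$ the threefold $Y=X\cap H$ is cut out (after eliminating one variable) by the equation $x_0Q_0+x_1Q_1=0$ restricted to $H$; along $P$ the Jacobian criterion shows that $Y$ is singular exactly at the points of $P$ where $Q_0,Q_1$ and the two differentials in the $x_0,x_1$ directions all vanish. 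Exactly as in Lemma~\ref{lemm:singthreefold}(i), this locus is $Z_H:=\{x_0=x_1=0\}\cap\{Q_0|_H=Q_1|_H=0\}$, the intersection of two conics in the plane $P\cong\bP^2$.

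Next I would argue that this intersection of conics is zero-dimensional and has length four. Zero-dimensionality follows from the generality hypothesis on $X$: if the two conics $Q_0|_P$ and $Q_1|_P$ (which depend on $H$) shared a common component, that component would be a line $\ell\subset P$ along which $X\cap H$ — indeed $X$ itself — is singular, forcing $X$ to be singular or, via the pencil structure, to contain a plane meeting $P$, contradicting generality. I should check this implication carefully: a positive-dimensional $Z_H$ would produce a line of singular points of $Y=X\cap H$, and by Lemma~\ref{lemm:singthreefold}(iv) applied to $Y$ (once we know $Y$ is a general cubic threefold containing $P$) the singularities of such a $Y$ are isolated, so this is exactly the obstruction to rule out. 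Granting zero-dimensionality, two conics in $\bP^2$ with no common component meet in a subscheme of length $2\cdot 2=4$ by Bézout, giving the claim.

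The main obstacle I anticipate is the transition between "$X$ general" and "$X\cap H$ general for every $H\supset P$" — that is, verifying that the conics $Q_0|_P, Q_1|_P$ never share a component, for any hyperplane $H$ in the pencil. This is precisely the point flagged in the paper's Section~\ref{sect:threefold} (the remark that for smooth $X$ containing exactly one plane, every hyperplane section through $P$ is general), so I would either invoke that forthcoming fact or prove it here: if $Q_0|_P$ and $Q_1|_P$ had a common line $\ell$, then every conic in the pencil $\{sQ_0+tQ_1\}|_P$ would contain $\ell$, so the whole pencil of quadric surfaces $\{sQ_0+tQ_1=0\}\cap H$ in $Y$ would be singular along $\ell\subset P$; tracing this back, the plane $\mathrm{span}(\ell,\text{ruling})$ inside a fiber would exhibit a second plane in $X$ meeting $P$, contradicting the hypothesis. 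A short argument of this shape closes the lemma. Finally, I note the length-four count is the reason the curve $C$ has genus $2$: the sextic discriminant $\Delta\subset P^\perp$ of $q$ acquires its structure from these four points, consistent with Lemma~\ref{lemm:singthreefold}.
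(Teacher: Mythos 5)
Your overall strategy (a Jacobian computation followed by B\'ezout for two conics with no common component) is the right shape, but your coordinate setup contains a dimensional error that breaks the argument as written. In $\bP^5$ the plane $P$ has codimension $3$, not $2$: the locus $\{x_0=x_1=0\}$ is a $\bP^3$. Correctly, $P=\{x_0=x_1=x_2=0\}$, the equation of $X$ is $x_0Q_0+x_1Q_1+x_2Q_2=0$, and the hyperplanes containing $P$ form a $\bP^2$, namely $(P^\perp)^*$, not a pencil $\bP^1$. A symptom of the error is that your $Z_H=\{x_0=x_1=Q_0=Q_1=0\}$ does not depend on $H$ at all, whereas the entire content of the lemma (and of the fibration $\rho$ built from it) is that this locus varies with $H$; moreover, with only two quadrics your $X$ would be singular along the nonempty locus $\{x_0=x_1=Q_0=Q_1=0\}$, contradicting smoothness. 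There is also a circularity in your zero-dimensionality step: Lemma~\ref{lemm:singthreefold}(iv) is proved under the standing assumption that $Y$ is \emph{general}, i.e.\ that $\dim Z=0$, which is exactly what you need to establish for $Y=X\cap H$.

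The repair is short once the coordinates are fixed. For $H=\{ax_0+bx_1+cx_2=0\}$, eliminating one variable shows that $\Sing(X\cap H)\cap P$ is the base locus of the pencil $\{(\alpha Q_0+\beta Q_1+\gamma Q_2)|_P : a\alpha+b\beta+c\gamma=0\}$ inside the net of conics spanned by $Q_0|_P$, $Q_1|_P$, $Q_2|_P$ on $P\cong\bP^2$. Smoothness of $X$ says precisely that this net has empty base locus, since the gradient of the cubic at a point of $P$ is $(Q_0,Q_1,Q_2,0,0,0)$; in particular no nonzero member of the net restricts to the zero form on $P$. If two members of the pencil shared a component $D$, then $D$ would meet any third member of the net in a nonempty subscheme of the net's base locus, a contradiction; hence every such pencil has zero-dimensional base locus, of length $4$ by B\'ezout. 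Note that this uses only smoothness of $X$, not the no-second-plane hypothesis, which enters elsewhere (smoothness of $\Delta$, fibers at worst cones). For comparison, the paper argues differently: it packages all hyperplanes at once into the morphism $g\colon P\to(P^\perp)^*$, $x\mapsto[T_xX\cap P^\perp]$, whose fibers are exactly the loci in question, observes that $g$ is surjective (every cubic threefold containing a plane is singular somewhere along it) and hence finite, being a surjective self-map of $\bP^2$, and then cites Lemma~\ref{lemm:singthreefold}(i) for the length. Your approach, once corrected, is a legitimate and more explicit alternative.
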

\begin{proof}
    There is a morphism $g:P\to(P^\perp)^*$ sending $x\mapsto([T_xX\cap P^\perp])$, and the fiber of $g$ over $[L]\in(P^\perp)^*$ is the singular locus of $X\cap H$ along $P$, where $H$ is the hyperplane spanned by $P$ and $L$. Since any cubic threefold containing a plane has singularities along the plane, $g$ is surjective. Hence $g$ is finite, as is any surjective endomorphism of $\bP^2$. We end by applying Lemma~\ref{lemm:singthreefold} (i).
\end{proof}

Thus for each hyperplane $H\subset P$, the cubic threefold $Y=X\cap H$ containing $P$ is general in the sense of Section~\ref{sect:threefold}.

\begin{prop}\label{prop:domainpi}
The domain of $\pi$ is $F\setminus P^*$.
\end{prop}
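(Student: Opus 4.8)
The plan is to show two things: first, that $\pi$ extends to a morphism on $F \setminus P^*$; second, that $\pi$ genuinely fails to extend over any point of $P^*$. For the extension part, I would argue that the only lines $[L] \in F$ for which $q(L) \subset P^\perp$ fails to be a single point are precisely the lines contained in $P$: indeed, if $L \not\subset P$, then $L$ meets $P$ in at most one point, so projection from $P$ maps $L$ isomorphically onto a line in $P^\perp$, giving a well-defined value $\pi([L]) = [q(L)] \in (P^\perp)^*$. The subtlety is that this only shows $\pi$ is defined \emph{set-theoretically} away from $P^*$; to get a morphism I would note that the incidence correspondence defining $\pi$ — sending $[L]$ to the unique plane through $P$ containing the span of $L$, equivalently the hyperplane $H \supset P$ with $L \subset H$ — is given by a clearly regular construction (e.g. the hyperplane spanned by $P$ and a general point of $L$, or more intrinsically via the Plücker coordinates of $L$), so the rational map is in fact regular at every point where the value is unambiguous, i.e. on $F \setminus P^*$.

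For the second half — that $\pi$ does not extend over any $[\ell] \in P^*$ — I would use the explicit resolution and fiber structure coming from Section~\ref{sect:threefold}. Fix $[\ell] \in P^*$, so $\ell \subset P$ is a line. As $[L] \in F$ approaches $[\ell]$ along different $1$-parameter families, the hyperplane $H \supset P$ containing $L$ varies: concretely, for any point $x \in \ell$ there is by Lemma~\ref{lemm:uniquehyp} a unique hyperplane $H_x \supset P$ with $X \cap H_x$ singular at $x$, and in $X \cap H_x$ the line $\ell$ passes through the node $x$, so $\ell$ is a limit of lines in $(X \cap H_x)\setminus P$ near $x$ (lines through a node degenerate to lines in the tangent cone, which meet $P$). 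Thus $\ell$ lies in the closure of $\cU \subset F(X\cap H_x)$ for every $x \in \ell$, meaning $[\ell]$ is a limit point of $\pi^{-1}([L_x])$ for the pairwise distinct values $[L_x] = [q$-image of the fiber plane$]$; since these images sweep out a positive-dimensional locus in $\bP^2$ as $x$ ranges over $\ell$, the map $\pi$ cannot be given a well-defined value at $[\ell]$. Equivalently, the exceptional divisor $E = \{([\ell],x) : x \in \ell \subset P\}$ of $\Bl_{P^*}F$ maps onto a positive-dimensional set under the resolved map, so $P^*$ is genuinely in the indeterminacy locus.

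The main obstacle I expect is the second direction: making rigorous that every point of $P^*$ — not just a general one — lies in the indeterminacy locus, and in particular that the fiber of the resolved map over $E$ is positive-dimensional at \emph{each} $[\ell]$. For a general $\ell$ one can appeal to the fact that $\pi$ restricted to a neighborhood is dominant onto $\bP^2$ and $P^*$ is a surface, so generic indeterminacy is automatic; but to nail down \emph{every} point I would invoke Lemma~\ref{lemm:4to1} (the singular locus of each $X \cap H$ along $P$ has length four) together with the analysis of $\oU \cap P^* = \bigcup_{z \in Z} z^*$ from Lemma~\ref{lemm:ubarp} applied fiberwise: every line $\ell \subset P$ meets $\mathrm{Sing}(X \cap H) \cap P$ for a one-parameter family of hyperplanes $H$ (varying with the chosen point $x \in \ell \cap \mathrm{Sing}(X\cap H)$), and this gives the needed positive-dimensional ambiguity uniformly. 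The bookkeeping to see that these hyperplanes $H_x$ really are distinct for distinct $x \in \ell$ — which follows from Lemma~\ref{lemm:uniquehyp} since $X \cap H_x$ is singular at $x$ and at no other hyperplane — is the key point that upgrades "generic" to "every."
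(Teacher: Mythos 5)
The second half of your argument (no point of $P^*$ lies in the domain) is essentially the paper's Lemma~\ref{lemm:doesntextend} and is sound, up to one slip: Lemma~\ref{lemm:uniquehyp} gives a well-defined assignment $x\mapsto H_x$ but says nothing about injectivity --- by Lemma~\ref{lemm:4to1} this assignment is $4$-to-$1$, so $H_x=H_{x'}$ can certainly occur for distinct $x,x'\in\ell$. What you actually need is that $x\mapsto[T_xX\cap P^\perp]$ is a \emph{finite} (hence non-constant) morphism on $\ell$, so that $[\ell]$ lies in the closures of infinitely many distinct fibers; that finiteness is exactly what the proof of Lemma~\ref{lemm:4to1} supplies.

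The first half has a genuine gap, and it sits at the crux of the proposition. You assert that if $L$ meets $P$ in exactly one point $x$, then projection from $P$ maps $L$ isomorphically onto a line in $P^\perp$. This is false: every point of $L\setminus\{x\}$ spans with $P$ the same $\bP^3$, namely $\mathrm{span}(L,P)$, so the projection collapses $L$ to the single point $\mathrm{span}(L,P)\cap P^\perp$, and the formula $\pi([L])=[q(L)]$ gives no element of $(P^\perp)^*$. Your ``intrinsic'' description fails for the same reason: such an $L$ is contained in a whole pencil of hyperplanes $H\supset P$, not a unique one. The locus of lines meeting $P$ in exactly one point is a \emph{divisor} in $F$ (the union over $H\supset P$ of the surfaces $\cF\subset F(X\cap H)$), so it cannot be waved away; even the general fact that a rational map from a normal variety to $\bP^2$ is defined away from codimension two only handles a general such line, not all of them. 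Extending $\pi$ across this divisor is precisely the content of the paper's Lemma~\ref{lemm:extend}: for a line $M$ with $M\cap P=\{x\}$, one shows $[M]$ lies in $\oU=\overline{\pi^{-1}([\ell])}$ for \emph{exactly one} $\ell$, namely $\ell=T_xX\cap P^\perp$ --- this uses Lemma~\ref{lemm:ubarf}, i.e.\ that $\oU\cap\cF=\bigcup_{z\in Z}C_z$, so a line of $\cF$ lies in $\oU$ if and only if it passes through a singular point of $X\cap H$ --- and then normality of $F$ forces the graph of $\pi$ to be an isomorphism over $[M]$. Without an argument of this kind, your proof only establishes that $\pi$ is defined on the smaller open set of lines disjoint from $P$, which is the paper's starting definition rather than the proposition.
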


The claim follows immediately from the next two lemmas.

\begin{lemm}\label{lemm:extend}
    Let $M$ be a line in $X$ meeting $P$ in exactly one point $x$. Then $[M]$ belongs to the domain of $\pi$, and $\pi([M])=[T_xX\cap P^\perp]$.
\end{lemm}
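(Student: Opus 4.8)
The plan is to produce the extension of $\pi$ over $[M]$ by a one-parameter degeneration, reading off the limiting value from first-order information. Two standard observations make the formula plausible. First, since $M$ is a line on the hypersurface $X$ passing through $x$, parametrizing $M$ and expanding the cubic to first order gives $M\subset T_xX$; and since $P\subset X$ is linear and $x\in P$, the same argument gives $P=T_xP\subset T_xX$. So $T_xX$ is a hyperplane containing $P$, and because $X$ is smooth $df_x\neq 0$, so $T_xX\cap P^\perp$ is a genuine line in $P^\perp$, i.e.\ a point of $(P^\perp)^*$; this is the value we must produce.

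Next I would set up the degeneration. Since $F$ is smooth and irreducible and the locus $V\subset F$ of lines disjoint from $P$ is dense open, there is a trait $\operatorname{Spec}R\to F$ ($R$ a discrete valuation ring with uniformizer $\varpi$, residue field we may take algebraically closed) sending the closed point to $[M]$ and the generic point into $V$. Pulling back the universal line yields a flat family $\mathcal L\subset\bP^5_R$ of lines on $X$ with special fiber $M$ and generic fiber $\mathcal L_\eta$ disjoint from $P$. Over the generic point, $q$ maps $\mathcal L_\eta$ isomorphically onto a line in $P^\perp$, i.e.\ a $\operatorname{Frac}(R)$-point of $(P^\perp)^*$; since $(P^\perp)^*$ is proper this extends uniquely to an $R$-point, whose closed value $\ell_0\subset P^\perp$ is by construction the flat limit of the lines $q(\mathcal L_t)$. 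To identify $\ell_0$, I would pick two sections $p,p'$ of $\mathcal L\to\operatorname{Spec}R$ (these exist, as $\mathcal L\to\operatorname{Spec}R$ is a $\bP^1$-bundle over a discrete valuation ring) with closed values $p_0=x$ and $p'_0\in M\setminus\{x\}$; then $q(\mathcal L_\eta)$ is the line spanned by $q(p_\eta)$ and $q(p'_\eta)$, so $\ell_0$ is the line through $\lim q(p)$ and $\lim q(p')$ provided these two points are distinct.

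The computation of the two limits is the heart of the argument. Write $P=\{x_0=x_1=x_2=0\}$ and the cubic as $f=x_0g_0+x_1g_1+x_2g_2$, so that at a point of $P$ the differential is $df_x=g_0(x)x_0+g_1(x)x_1+g_2(x)x_2$ and $T_xX\cap P^\perp=\{g_0(x)u_0+g_1(x)u_1+g_2(x)u_2=0\}$. In an affine chart at $x$, write $p=x+\varpi w+O(\varpi^2)$ with $w\bmod\varpi$ a nonzero vector not lying in $P$; since $\mathcal L\subset X_R$ we have $f(p)=0$, and comparing the coefficient of $\varpi$ gives $df_x(w)=0$, that is, $w\in T_xX$. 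As $x_0,x_1,x_2$ vanish at $x$, the first three coordinates of $p$ are $\varpi w_i+O(\varpi^2)$, whence $q(p)=(w_0:w_1:w_2)+O(\varpi)$ and $\lim q(p)=(w_0:w_1:w_2)$; the relation $df_x(w)=0$ says precisely that this point lies on $T_xX\cap P^\perp$. Likewise $\lim q(p')=q(p'_0)$, and $p'_0\in M\subset T_xX$ forces $q(p'_0)\in T_xX\cap P^\perp$. Hence, whenever the section $p$ is chosen so that $\lim q(p)\neq q(p'_0)$ — which can be arranged for a general trait, the only bad case being that $p$ meets the special fiber tangentially to $M$ — the line $\ell_0$ passes through two distinct points of $T_xX\cap P^\perp$, so $\ell_0=T_xX\cap P^\perp$.

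It remains to promote this to the statement that $\pi$ is regular at $[M]$. The computation shows that along every trait through $[M]$ with generic point in $V$ the limit in $(P^\perp)^*$ is the fixed point $[T_xX\cap P^\perp]$; since $F$ is smooth, hence normal, and $(P^\perp)^*$ is proper, the closure $\Gamma$ of the graph of $\pi$ is proper and birational over $F$ with connected fibers, and the fiber $\Gamma_{[M]}$ consists precisely of the points realizable as such limits. Therefore $\Gamma_{[M]}$ is the single point $[T_xX\cap P^\perp]$, so $\Gamma\to F$ is an isomorphism near $[M]$ and $\pi$ extends there with the claimed value. I expect this last step — the independence of the limit from the chosen degeneration — to be the main obstacle. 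A clean alternative is to argue it globally: show that $\mathcal I\times_X\Bl_P X\to F$ is flat over $F\setminus P^*$ with one-dimensional fibers on which $q$ has degree one, so that $[L]\mapsto q_\ast[\beta^{-1}(L)]$ (with $\beta\colon\Bl_PX\to X$ the blow-down) defines a morphism $F\setminus P^*\to(\bP^2)^*$ refining $\pi$; the technical cost there is the flatness verification (miracle flatness, via Cohen--Macaulayness of the fiber product and equidimensionality of its fibers over $F\setminus P^*$), and the value at $[M]$ comes out to $[T_xX\cap P^\perp]$ by the same local computation above applied to the exceptional component $\beta^{-1}(x)\subset\beta^{-1}(M)$.
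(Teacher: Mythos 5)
Your route is genuinely different from the paper's. The paper does no local computation at all: it identifies $p_2^{-1}([\ell])$ with the Fano scheme $F(Y)$ of the threefold $Y=X\cap\mathrm{span}(P,\ell)$ and invokes the structure theory of Section~2 --- specifically Lemma~\ref{lemm:ubarf}, which says $\oU\cap\cF=\bigcup_z C_z$ --- to conclude that $[M]\in\overline{\pi^{-1}([\ell])}$ if and only if $M$ meets $\Sing(Y)$, i.e.\ if and only if $\mathrm{span}(P,\ell)=T_xX$; normality of $F$ then gives the extension. Your valuative/Taylor-expansion argument is self-contained and more elementary (it does not need the threefold analysis), and it produces the value $[T_xX\cap P^\perp]$ directly rather than reading it off from which hyperplane section is singular at $x$. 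Both proofs end with the same graph-closure/normality step.

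There is, however, one soft spot you should repair. Your computation of $\ell_0$ is carried out only for traits for which $\lim q(p)\neq q(p'_0)$, which you arrange ``for a general trait''; but your final paragraph needs the limit to equal $[T_xX\cap P^\perp]$ along \emph{every} trait through $[M]$ with generic point in $V$, since every point of $\Gamma_{[M]}$ is realized by some such trait, not just a general one. (Worse, the genericity may be unattainable: the achievable first-order velocities $w$ at $x$ are constrained by the image of $H^0(M,N_{M/X})\to N_{M/X,x}$, which for a line of the second type is a proper subspace and could a priori land inside $T_x\mathrm{span}(P,M)$.) The fix is available from your own computation and removes the distinctness hypothesis entirely: every point of $\ell_0$ is $\lim q(y)$ for some section $y$ of $\mathcal L$ after a finite base change; if $y_0\neq x$ this limit is $q(y_0)=q(p'_0)\in T_xX\cap P^\perp$, and if $y_0=x$ then writing $y_i=\varpi^{m}v_i+\cdots$ for $i=0,1,2$ with $(v_0,v_1,v_2)$ the first nonvanishing leading coefficients, the relation $f(y)=y_0g_0(y)+y_1g_1(y)+y_2g_2(y)=0$ forces $g_0(x)v_0+g_1(x)v_1+g_2(x)v_2=0$ at leading order, so the limit again lies on $T_xX\cap P^\perp$. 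Hence $\ell_0$ is a line contained in the line $T_xX\cap P^\perp$ and so equals it, for every trait; with that emendation your argument goes through.
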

\begin{proof}
    Let 
    \[
    \cH=\{(L,H)\;|\;L,P\subset H\}\subset F\times(\bP^5)^*,
    \]
    let $p_1$ be the first projection, and let $p_2:\cH\to(P^\perp)^*$ send $(L,H)\mapsto[H\cap P^\perp]$. If $L\subset X\setminus P$, then $p_1^{-1}([L])=\{(L,H)\}$ where $H$ is the hyperplane spanned by $L$ and $P$, and $p_2(L,H)=\pi([L])$, which verifies that the diagram below commutes.
    \begin{center}
        \begin{tikzcd}
            & \cH \arrow[dl,"p_1" above] \arrow[dr,"p_2"] &\\ 
            F \arrow[rr,dashed,"\pi" below] & & (P^\perp)^*
        \end{tikzcd}
    \end{center}
    For $\ell\subset P^\perp$, let $H$ be the hyperplane spanned by $\ell$ and $P$, let $Y=X\cap H$, and let $F(Y)$ be the Fano scheme of lines on $Y$. We may identify $p_2^{-1}([\ell])$ with the $F(Y)$, and the restriction of $p_1$ to $p_2^{-1}([\ell])$ is an embedding. Let $Z=P\cap\mathrm{Sing}(Y)$, and let $P^*$, $\cF$, and $\oU$ be the three components of $F(Y)$, as described in Theorem~\ref{theo:fanothree}. We have $\cU=\pi^{-1}([\ell])$, so $[M]\in\overline{\pi^{-1}([\ell])}$ if and only if $[M]\in\oU\cap\cF$, which by Lemma~\ref{lemm:ubarf} holds if and only if $M\cap Z\neq\emptyset$. Since $M\cap P=\{x\}$, we have $M\cap Z\neq\emptyset$ if and only if $Y$ is singular at $x$, which occurs precisely when $H=T_xX$. Thus $[M]$ lies in the closure of the $\pi^{-1}([\ell])$ if and only if $\ell=T_xX\cap H$. Since $F$ is normal, $\pi$ extends to $[M]$.
\end{proof}

\begin{lemm}\label{lemm:doesntextend}
Each point $[L]\in P^*$ lies in the closures of a pencil of fibers of $\pi$. Specifically, $[L]\in\overline{\pi^{-1}([M])}$ if and only if $M=T_xX\cap P^\perp$ for some $x\in L$.
\end{lemm}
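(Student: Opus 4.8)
The plan is to mimic the analysis of Lemma~\ref{lemm:extend} but now tracking a point $[L] \in P^*$, i.e.\ a line $L \subset P$, rather than a line meeting $P$ once. The key structural input is the incidence correspondence $\cH = \{(L,H) \mid L, P \subset H\} \subset F \times (\bP^5)^*$ from the proof of Lemma~\ref{lemm:extend}, together with the two projections $p_1 \colon \cH \to F$ and $p_2 \colon \cH \to (P^\perp)^*$. The first thing I would observe is that for $[L] \in P^*$, the fiber $p_1^{-1}([L])$ is no longer a single point: since $L \subset P$ and $P$ is contained in \emph{every} hyperplane through $P$, the line $L$ is contained in every such hyperplane, so $p_1^{-1}([L]) = \{L\} \times \{H \mid P \subset H\} \cong (P^\perp)^*$ maps isomorphically onto $(P^\perp)^*$ under $p_2$. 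So $[L]$ sits over the whole of $\bP^2 = (P^\perp)^*$, which is already consistent with the claim that $[L]$ lies in the closure of a pencil of fibers — the content of the lemma is to cut that $\bP^2$ down to the correct $\bP^1$.

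Next I would fix $[M] \in (P^\perp)^*$, let $H$ be the hyperplane spanned by $P$ and $M$, set $Y = X \cap H$, and recall from Lemma~\ref{lemm:extend} that $\overline{\pi^{-1}([M])} = \oU \cup \cF$ inside $F(Y) \subset F$ (the closure of $\cU$ in $F(Y)$ is $\oU$, which meets $P^*$ along $\cF \cap \oU \cap P^*$ and along $\bigcup_{z \in Z} z^*$ by Proposition~\ref{coro:ubar}). Therefore $[L] \in \overline{\pi^{-1}([M])}$ if and only if $[L] \in \oU \cap P^*$, and by Lemma~\ref{lemm:ubarp} this happens exactly when $L$ passes through some point $z \in Z = P \cap \Sing(Y)$. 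So the problem reduces to: $[L] \in \overline{\pi^{-1}([M])}$ iff $L$ contains a point of $\Sing(X \cap H)$. By Lemma~\ref{lemm:uniquehyp}, for a point $x \in P$ there is a \emph{unique} hyperplane $H_x \supset P$ with $X \cap H_x$ singular at $x$, and by the proof of Lemma~\ref{lemm:4to1} that hyperplane is the one spanned by $P$ and $T_x X \cap P^\perp$, i.e.\ $H_x$ corresponds to $[M] = [T_x X \cap P^\perp] = g(x)$. Combining: $L$ meets $\Sing(X \cap H)$ iff there is $x \in L$ with $H = H_x$, iff there is $x \in L$ with $M = T_x X \cap P^\perp = g(x)$. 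This is precisely the asserted condition, and as $x$ ranges over the line $L \cong \bP^1$ the classes $g(x) = [T_x X \cap P^\perp]$ trace out (the image under the finite morphism $g$ of) a line in $(P^\perp)^*$ — a pencil — giving the "pencil of fibers" statement. (One should check $g|_L$ is nonconstant: if $g$ contracted $L$ then all of $X \cap H_x$ would be singular along a moving $1$-parameter family, contradicting Lemma~\ref{lemm:singthreefold}(iv) applied to the general threefold $X \cap H_x$, or more directly contradicting that $\Sing(X\cap H_x)\cap P=Z$ is finite.)

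The step I expect to require the most care is the identification $\overline{\pi^{-1}([M])} \cap P^* = \oU \cap P^*$, specifically making sure nothing in the closure comes from the $\cF$-component in a way not already accounted for. Lemma~\ref{lemm:extend} handles lines meeting $P$ once (they lie in the closure iff $Y$ is singular at that point), but a priori the closure of $\cU = \pi^{-1}([M])$ inside $F$ — as opposed to inside $F(Y)$ — could be larger; however, since each fiber $\pi^{-1}([M])$ consists of lines in $(X\setminus P)\cap H$ and $F(Y)$ is closed in $F$, the closure in $F$ of $\pi^{-1}([M])$ equals its closure in $F(Y)$, namely $\oU$, so $\overline{\pi^{-1}([M])}\cap P^* = \oU\cap P^*$ exactly. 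The only remaining subtlety is that Proposition~\ref{coro:ubar}/Lemma~\ref{lemm:ubarp} describe $\oU \cap P^*$ as $\bigcup_{z\in Z} z^*$, a union of pencils of lines through the (up to four) points of $Z$, and a line $[L]\in P^*$ lies in this union iff $L$ passes through one of those points — which is exactly "$L$ meets $\Sing(Y)\cap P$", closing the loop. I would write this out carefully but it is bookkeeping with results already established.
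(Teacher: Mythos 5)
Your argument is correct and follows the same route as the paper: reduce to the identification $\overline{\pi^{-1}([M])}\cap P^*=\oU\cap P^*=\bigcup_{z\in Z}z^*$ via Lemma~\ref{lemm:ubarp}, then translate ``$L$ meets $Z$'' into ``$H=T_xX$ for some $x\in L$.'' The only blemish is the passing claim that $\overline{\pi^{-1}([M])}=\oU\cup\cF$ (the closure of $\cU$ is $\oU$ alone), but you correct this yourself in the final paragraph, so the argument stands.
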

\begin{proof}
    Suppose $M\subset P^\perp$, let $H$ be the hyperplane spanned by $P$ and $M$, let $Y=X\cap H$, and let $F(Y)$ be the Fano scheme of lines on $Y$. As usual, we refer to the components of $F(Y)$ as $\oU$, $\cF$, and $P^*$. Since $L\subset P$, we have $[L]\in\oU$ if and only if $L$ passes through a node of $Y$ by Lemma~\ref{lemm:ubarp}, and this is the case if and only if $H=T_xX$ for some $x\in L$. This completes the argument since $\overline{\pi^{-1}([M])}=\oU$.
\end{proof}

Lemma~\ref{lemm:doesntextend} helps to describe the resolution of indeterminacy of $\pi$:

\begin{lemm}\label{lemm:blowupresolves}
The blowup of $F$ along $P^*$ resolves the indeterminacy of $\pi$.
\end{lemm}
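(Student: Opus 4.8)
The plan is to show that the rational map $\pi$ becomes a morphism after blowing up its indeterminacy locus, which by Proposition~\ref{prop:domainpi} is exactly $P^*$. The strategy is local on the base: over each point $[L]\in P^*$, identify the fiber of $\Bl_{P^*}F\to F$ (a $\bP^2$, since $P^*$ is a smooth surface and $F$ a smooth fourfold) and produce a morphism from that $\bP^2$ to $\bP^2=(P^\perp)^*$ agreeing with $\pi$ generically, then check these fit together. By Lemma~\ref{lemm:doesntextend}, the fibers of $\pi$ whose closures contain $[L]$ are indexed by the points $x\in L$ via $x\mapsto [T_xX\cap P^\perp]$; so the natural candidate for the value of the resolved map on the exceptional $\bP^2$ over $[L]$ is governed by the Gauss-type map $x\mapsto [T_xX\cap P^\perp]$ along $L$.

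First I would set up coordinates: take $P=\{x_2=x_3=x_4=x_5=0\}$ (so $P^\perp = \{x_0=x_1=0\}$ with coordinates $x_2,\dots,x_5$), write $X$ as $\sum_{i=0}^1 x_i Q_i(x) + C(x_2,\dots,x_5)$ where $C$ is the restriction of the cubic to $P^\perp$ — but since $P\subset X$ we actually have the cubic in the form $x_0 A + x_1 B$ with $A,B$ quadrics, as in Section~\ref{sect:threefold}. The map $\pi$ sends a line $[M]$ not in $P$ to the point of $(P^\perp)^*$ cut out by the image of $M$ under projection from $P$; equivalently, writing the Plücker or parametric data of $M$, $\pi$ is given by the two linear forms that are the $x_0,x_1$-coefficients appearing in the span of $M$ with $P$. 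I would compute $\pi$ in an affine chart around a point $[L]\in P^*$, exhibit its components as a ratio of functions vanishing along $P^*$ to the same order, so that after blowing up $P^*$ (introducing the ratio as a new coordinate on the exceptional divisor) the map extends; and I would check that on the fiber $\bP^2$ over $[L]$ the extended map is the linear projection recording $x\mapsto [T_xX\cap P^\perp]$, which is exactly the pencil predicted by Lemma~\ref{lemm:doesntextend}. The universal property of blowing up a smooth center in a smooth variety (or the valuative criterion applied to the normal, even smooth, variety $F$) then packages the local computations into a global morphism $\Bl_{P^*}F\to\bP^2$.

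Concretely, the key steps in order: (1) fix coordinates and write $\pi$ explicitly as a pair of bihomogeneous forms in the line's coordinates; (2) show the common vanishing locus of these forms is precisely $P^*$ and that they vanish to order exactly one along $P^*$ — this uses Lemma~\ref{lemm:extend} (the map already extends over lines meeting $P$ in a point) and Lemma~\ref{lemm:doesntextend} (the indeterminacy along $P^*$ is "pencil-like," i.e. codimension one in the blown-up picture, not worse); (3) invoke the universal property of the blowup: since $P^*$ is the scheme-theoretic base locus and it is a smooth (hence regularly embedded) center, $\pi$ lifts to a morphism on $\Bl_{P^*}F$; (4) identify the restriction of this morphism to the exceptional $\bP^2$ over $[L]$ with the map $x\mapsto [T_xX\cap P^\perp]$, $x\in L\cong\bP^1$, linearly embedded, confirming consistency with Lemma~\ref{lemm:doesntextend}.

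The main obstacle I expect is step (2): verifying that the base locus of $\pi$ is reduced and equal to $P^*$ scheme-theoretically, and in particular that $\pi$ does not vanish to higher order along $P^*$ (which would require blowing up more, or blowing up a thickened center). This is where the generality hypothesis on $X$ (no second plane meeting $P$, $\Delta$ smooth) and Lemma~\ref{lemm:4to1} (the singular locus along each hyperplane section is finite of length four, so the Gauss map $g\colon P\to(P^\perp)^*$ of Lemma~\ref{lemm:4to1} is finite) should enter: finiteness of $g$ forces the pencils in Lemma~\ref{lemm:doesntextend} to sweep out $(P^\perp)^*$ with the expected multiplicity, ruling out excess vanishing. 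I would handle this by a direct local computation in the chosen coordinates, using that the quadrics $A,B$ cut out a finite scheme on $P$ (equivalently $\Delta$ is a smooth sextic, so the relevant resultant is nonvanishing), which pins down the order of vanishing of the components of $\pi$ along $P^*$ to be exactly one.
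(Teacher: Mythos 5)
Your strategy---compute the base ideal of $\pi$ in coordinates and invoke the universal property of the blowup---could in principle succeed, but as written the decisive step is missing and the setup contains errors. On the setup: a plane in $\bP^5$ is cut out by three linear forms, so one should take $P=\{x_3=x_4=x_5=0\}$ and $X=x_3Q_3+x_4Q_4+x_5Q_5$; with your choice $\{x_2=\dots=x_5=0\}$ the locus $P$ is a line and $(P^\perp)^*$ has the wrong dimension. Likewise $P^*$ has codimension $2$ in the fourfold $F$, so the exceptional fibers of $\Bl_{P^*}F\to F$ are $\bP^1$'s (namely $E=\{([L],x)\;|\;x\in L\}\to P^*$ with fiber $L$), not $\bP^2$'s, and a map to $(P^\perp)^*\cong\bP^2$ requires three forms, not two. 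More seriously, the step you yourself flag as the main obstacle---that the scheme-theoretic base ideal of $\pi$ pulls back to an invertible ideal on $\Bl_{P^*}F$---is the entire content of the lemma, and you defer it to an unexecuted ``direct local computation.'' Worse, the criterion you propose to check, that the components of $\pi$ vanish to order exactly one along $P^*$, does not suffice for a codimension-two center: the ideal $(x,\,x+y^2)=(x,y^2)\subset k[x,y]$ has generators each vanishing to order one at the origin and has the reduced origin as its zero set, yet the map $(x:y^2)$ is not resolved by blowing up the reduced origin (one indeterminate point survives on the exceptional curve). So even a completed computation along the lines you describe would not close the argument; you need the base locus to be $P^*$ with its reduced scheme structure (or at least the pullback of the base ideal to be invertible), which is strictly more than an order-of-vanishing statement.

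For comparison, the paper avoids coordinates entirely: it takes the closure $\Gamma\subset F\times(P^\perp)^*$ of the graph of $\pi$ and uses Lemma~\ref{lemm:doesntextend} to identify the fiber of $\Gamma\to F$ over each $[L]\in P^*$ with the pencil $\{[T_xX\cap P^\perp]\;|\;x\in L\}\cong L$; the exceptional locus of $\Gamma\to F$ is therefore exactly the incidence divisor $E$, and $\Gamma\simeq\Bl_{P^*}F$. This fiberwise information---that the graph has a single $\bP^1$ over each point of $P^*$, varying as the tautological family---is precisely what rules out the pathology in the $(x,y^2)$ example, and it is what your plan would ultimately have to extract from the local computation anyway. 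If you want to keep your approach, you should reorganize it around the graph and Lemma~\ref{lemm:doesntextend} rather than around the order of vanishing of the defining forms.
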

\begin{proof}
    Let $\Gamma\subset F\times(P^\perp)^*$ be the graph of $\pi$ along with its projections $p_1$ and $p_2$, and note that since $\pi$ does not extend to any point on $P^*$, the projection $p_1$ factors through $\Bl_{P^*}F$. By Lemma~\ref{lemm:doesntextend}, the exceptional divisor $E'$ of $\Gamma\to F$ can be identified with 
    \[
    \{([L],[T_xX\cap P^\perp])\;|\;x\in L\},
    \]
    and the induced map 
    \[
    E'\to E=\{([L],x)\;|\;x\in L\subset P\}\subset\Bl_{P^*}F
    \]
    is an isomorphism. Hence $\Gamma\simeq\Bl_{P^*}F$, as needed.
\end{proof}

As mentioned prior, the exceptional divisor $E\subset\Bl_{P^*}F$ can be contracted via the projection $E\to P$ to give a blowdown $q:\Bl_{P^*}F\to M$ where $M$ is again a smooth variety, called the Mukai flop of $F$ along $P^*$.

\begin{prop}\label{prop:contract}
The morphism $p_2\colon\Bl_{P^*}F\to(P^\perp)^*$ factors through the blowdown $q\colon\Bl_{P^*}F\to M$.
\end{prop}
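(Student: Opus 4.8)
The plan is to show that $p_2$ is constant along every fiber of the blowdown $q$ and then descend it by a rigidity argument.

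First I would record the shape of $p_2$ on the exceptional divisor $E$. In the proof of Lemma~\ref{lemm:blowupresolves} the graph $\Gamma$ of $\pi$ was identified with $\Bl_{P^*}F$ in such a way that the exceptional divisor $E'=\{([L],[T_xX\cap P^\perp])\mid x\in L\}$ of $\Gamma\to F$ corresponds to $E=\{([L],x)\mid x\in L\subset P\}$ via $([L],[T_xX\cap P^\perp])\leftrightarrow([L],x)$. Transporting $p_2$ (the second projection on $\Gamma$) through this identification, its restriction to $E$ is the map $([L],x)\mapsto[T_xX\cap P^\perp]$, which depends on $x$ alone. Thus $p_2|_E$ factors as
\[
E\ \xrightarrow{\ \mathrm{pr}\ }\ P\ \xrightarrow{\ g\ }\ (P^\perp)^*,
\]
where $\mathrm{pr}$ is the projection onto $P$ and $g$ is the morphism $x\mapsto[T_xX\cap P^\perp]$ appearing in the proof of Lemma~\ref{lemm:4to1}.

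Next I would combine this with the description of $q$. By construction the blowdown $q\colon\Bl_{P^*}F\to M$ contracts $E$ precisely along $\mathrm{pr}$: it is an isomorphism over $M\setminus q(E)$, and its fiber over a point of $q(E)\cong P$ is the corresponding $\bP^1=\mathrm{pr}^{-1}(x)$, the pencil of lines in $P$ through $x$. Hence $p_2$ is constant on every fiber of $q$: automatically on the one-point fibers, and on each $\bP^1$-fiber $\mathrm{pr}^{-1}(x)$ because $p_2$ takes the single value $g(x)$ there by the previous step. Finally, since $q$ is a proper birational morphism onto the smooth (in particular normal) variety $M$, one has $q_*\mathcal O_{\Bl_{P^*}F}=\mathcal O_M$ and $q$ has connected fibers, so the rigidity lemma applies: a morphism from $\Bl_{P^*}F$ to a separated scheme that contracts each fiber of $q$ to a point factors uniquely through $q$. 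This produces the morphism $\rho\colon M\to(P^\perp)^*$ with $\rho\circ q=p_2$; note that $\rho$ restricts on $q(E)\cong P\subset M$ to the finite map $g$.

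The argument is short and I do not expect a genuine obstacle. The only points needing care are bookkeeping ones: correctly transporting $p_2$ through the identification $\Gamma\simeq\Bl_{P^*}F$ of Lemma~\ref{lemm:blowupresolves} so that $p_2|_E$ really is $([L],x)\mapsto g(x)$, and checking that the curves contracted by $q$ over $P$ are exactly the $\bP^1$-fibers of $\mathrm{pr}$, so that $p_2$ neither fails to contract something $q$ contracts nor contracts something $q$ does not. Both follow from the explicit descriptions already in place in Lemmas~\ref{lemm:extend}, \ref{lemm:doesntextend}, and \ref{lemm:blowupresolves}.
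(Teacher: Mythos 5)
Your proposal is correct and follows essentially the same route as the paper: both arguments reduce to the observation (via Lemmas~\ref{lemm:doesntextend} and~\ref{lemm:blowupresolves}) that $p_2|_E$ is $([L],x)\mapsto[T_xX\cap P^\perp]$, hence depends only on $x$ and is therefore constant on the $\bP^1$-fibers that $q$ contracts. The paper phrases this dually, by noting that $p_2^{-1}([N])\cap E$ is a union of $q$-fibers, and leaves the final descent implicit, whereas you spell it out via $q_*\mathcal O_{\Bl_{P^*}F}=\mathcal O_M$ and rigidity; that extra step is a welcome clarification but not a different method.
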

\begin{proof}
Let $[N]\in(P^\perp)^*$, and let $H$ be the hyperplane spanned by $P$ and $N$. By Lemmas~\ref{lemm:doesntextend} and~\ref{lemm:blowupresolves}, 
\[
p_2^{-1}([N])\cap E=\bigcup_{x\in P\cap\mathrm{Sing}(X\cap H)}\{([L],x)\;|\;x\in L\subset P\},
\]
so $p_2^{-1}([N])\cap E$ consists of a union of fibers of $q:\Bl_{P^*}F\to M$.
\end{proof}

We arrive at the commutative square presented in the statement of Theorem~\ref{theo:lagrangian}. It remains to show that the smooth fibers of $\rho\colon M\to(P^\perp)^*$ are torsors of abelian surfaces.

\begin{prop}
    Let $L\subset P^\perp$ be a line transverse to $\Delta$. Then there is a double cover $C$ of $L$ branched over $\Delta$ for which $\rho^{-1}([L])$ is a $\Pic_C^0$-torsor. Moreover, $2[\rho^{-1}([L])]=[\Pic^1_C]$ in the Weil-Ch\^atelet group.
\end{prop}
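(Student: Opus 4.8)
The plan is to identify the fiber $\rho^{-1}([L])$ with the torsor $T$ attached in Section~\ref{sect:threefold} to the cubic threefold $Y=X\cap H$, where $H\supset P$ is the hyperplane spanned by $P$ and $L$, and then to quote Corollary~\ref{coro:wcrelations}. As a first step I would record that $Y$ is general in the sense of Section~\ref{sect:threefold} (this was observed just after Lemma~\ref{lemm:4to1}), and that the quadric surface fibration $q_Y\colon\Bl_PY\to L$ is the restriction of $q$ over $L$, so its discriminant is $\Delta\cap L$; transversality of $L$ to the smooth sextic $\Delta$ makes $\Delta\cap L$ a reduced divisor of degree $6$. By Lemma~\ref{lemm:singthreefold}(v) this forces $\Bl_PY$ and $Y\setminus P$ to be smooth, and the double cover $C$ of $L$ branched over $\Delta\cap L$ is then a smooth curve of genus $2$ --- precisely the curve $C$ of Section~\ref{sect:threefold}, so every construction of that section (in particular the torsor $T$ and its group completion $G$) is available.

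Next I would carry out the identification of $\rho^{-1}([L])$. As recorded before Lemma~\ref{lemm:uniquehyp} and used in the proof of Lemma~\ref{lemm:extend}, the fiber $\pi^{-1}([L])$ agrees, away from the lines meeting $P$, with the open subscheme $\cU$ of lines in $Y\setminus P$, and its closure inside $F$ is the component $\oU$ of $F(Y)$. The rational map $F\dashrightarrow M$ is resolved by $\Bl_{P^*}F$ (Lemma~\ref{lemm:blowupresolves}) and then blows down the exceptional divisor $E=\{([L],x)\mid x\in L\subset P\}$ along the projection $E\to P$. Restricting to the fiber over $[L]$: its closure in $\Bl_{P^*}F$ is the proper transform of $\oU$, which by Lemma~\ref{lemm:ubarp} is the blowup $\Bl_{\oU\cap P^*}\oU$ of Lemma~\ref{lemm:graphpsiz}, and by the computation in the proof of Proposition~\ref{prop:contract} one has $p_2^{-1}([L])\cap E=\bigcup_{z\in Z}z^*$, each pencil $z^*$ being carried by $E\to P$ to the single point $z\in P\subset M$. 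Since $\Bl_{P^*}F\to F$ is an isomorphism away from $E$, it follows that $\rho^{-1}([L])$ is obtained from $\oU$ by replacing the curves $\bigcup_{z\in Z}z^*=\oU\cap P^*$ by the finite set $Z$; by Corollary~\ref{coro:tpoints} that surface is exactly $T$, so $\rho^{-1}([L])\cong T$. I expect this bookkeeping to be the main obstacle: one must check that blowing up $F$ along $P^*$, passing to the fiber over $[L]$, and then contracting $E$ affects the three components $P^*,\cF,\oU$ of $F(Y)$ exactly as Lemmas~\ref{lemm:ubarp}, \ref{lemm:graphpsiz} and Corollary~\ref{lemm:blubar} predict, and in particular that no curve other than $\oU\cap P^*$ is contracted inside the proper transform of $\oU$. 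One can soften part of this: since $C$ is smooth, $\Bl_{\oU\cap P^*}\oU$ is geometrically a blowup of the smooth surface $\Hilb^2C$ at finitely many points by Lemma~\ref{lemm:graphpsiz}, and $\cU$ is a common dense open subscheme of $\rho^{-1}([L])$ and of the geometrically abelian surface $T$, so it suffices to know that $\rho^{-1}([L])$ is birational to $T$, which is immediate.

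Finally, with $\rho^{-1}([L])\cong T$ in hand, everything follows from Section~\ref{sect:threefold}: the group scheme $G=\Pic^0_C\sqcup T\sqcup\Pic^1_C\sqcup T'$ exhibits $T$ as a torsor under $\Pic^0_C$; this torsor is smooth because geometrically it is the abelian surface $\Pic^0_C$, which is why $\rho^{-1}([L])$ is a \emph{smooth} fiber of $\rho$; and Corollary~\ref{coro:wcrelations} gives $2[\rho^{-1}([L])]=2[T]=[\Pic^1_C]$ in the Weil--Ch\^atelet group $H^1(k,\Pic^0_C)$, completing the proof.
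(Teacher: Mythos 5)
Your proposal follows the paper's proof essentially verbatim: identify $\rho^{-1}([L])$ with the torsor $T$ of Section~\ref{sect:threefold} by tracking the fiber over $[L]$ through the blowup $\Bl_{P^*}F$ and the contraction to $M$ (matching the contracted pencils $z^*$ with the curves contracted in Definition~\ref{defi:t}), and then invoke Corollary~\ref{coro:wcrelations}. One caveat on your proposed ``softening'': merely knowing $\rho^{-1}([L])$ is birational to $T$ does not make it a $\Pic^0_C$-torsor (a blowup of an abelian surface is birational to it without being one), so the explicit bookkeeping of which curves are contracted --- which you do carry out, as does the paper --- is genuinely needed.
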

\begin{proof}
    Let $H$ be the hyperplane spanned by $P$ and $L$. Then $Y=X\cap H$ is a general cubic threefold containing $P$ by Lemma~\ref{lemm:4to1}. The quadric surface fibration $\Bl_PY\to L$ has discriminant locus $L\cap\Delta$, and let $C$ be the double cover of $L$ parametrizing lines in the fibers of $q$. Let $F(Y)$ be the Fano variety of lines on $Y$, and let $\oU$ be the closure of the set of lines in $Y\setminus P$. Note that $\oU=\overline{\pi^{-1}([L])}$, and from the natural embedding $\Bl_{\oU\cap P^*}\oU\subset\Bl_{P^*}F$, we identify
    \[
    (\pi\circ p)^{-1}([L])=\Bl_{\oU\cap P^*}\oU.
    \]
    By the proof of Proposition~\ref{prop:contract}, the restriction of $q$ to $(\pi\circ p)^{-1}([L])$ contracts a line in $\oU$ for each point in $P\cap \mathrm{Sing}(X\cap H)$. The same lines are contracted by the defining morphism $\Bl_{\oU\cap P^*}\oU\to T$ from Lemma~\ref{lemm:graphpsiz}. Hence we can identify $\rho^{-1}([L])$ with the $\Pic_C^0$-torsor $T$ and apply Theorem~\ref{theo:fanothree}.
\end{proof}


\bibliographystyle{plain}
\bibliography{bibliography}

\begin{thebibliography}{10}

\bibitem{barth}
Wolf Barth and Antonius Van~de Ven.
\newblock Fano varieties of lines on hypersurfaces.
\newblock {\em Arch. Math. (Basel)}, 31(1):96--104, 1978/79.

\bibitem{beauville}
Arnaud Beauville and Ron Donagi.
\newblock La vari\'{e}t\'{e} des droites d'une hypersurface cubique de
  dimension {$4$}.
\newblock {\em C. R. Acad. Sci. Paris S\'{e}r. I Math.}, 301(14):703--706,
  1985.

\bibitem{benoistwitt}
Olivier Benoist and Olivier Wittenberg.
\newblock The {C}lemens-{G}riffiths method over non-closed fields.
\newblock {\em Algebr. Geom.}, 7(6):696--721, 2020.

\bibitem{brookethesis}
Corey Brooke.
\newblock {\em Lines on cubic threefolds and fourfolds containing a plane}.
\newblock PhD thesis, University of Oregon, 2023.

\bibitem{clemens}
C.~Herbert Clemens and Phillip~A. Griffiths.
\newblock The intermediate {J}acobian of the cubic threefold.
\newblock {\em Ann. of Math. (2)}, 95:281--356, 1972.

\bibitem{eisenbud}
David Eisenbud.
\newblock {\em Commutative algebra}, volume 150 of {\em Graduate Texts in
  Mathematics}.
\newblock Springer-Verlag, New York, 1995.
\newblock With a view toward algebraic geometry.

\bibitem{3264}
David Eisenbud and Joe Harris.
\newblock {\em 3264 and all that---a second course in algebraic geometry}.
\newblock Cambridge University Press, Cambridge, 2016.

\bibitem{hadan}
Ingo Hadan.
\newblock Tangent conics at quartic surfaces and conics in quartic double
  solids.
\newblock {\em Math. Nachr.}, 210:127--162, 2000.

\bibitem{hartshorne}
Robin Hartshorne.
\newblock {\em Algebraic geometry}.
\newblock Graduate Texts in Mathematics, No. 52. Springer-Verlag, New
  York-Heidelberg, 1977.

\bibitem{hassett}
Brendan Hassett.
\newblock Special cubic fourfolds.
\newblock {\em Compositio Math.}, 120(1):1--23, 2000.

\bibitem{hassetttschinkel}
Brendan Hassett and Yuri Tschinkel.
\newblock Spaces of sections of quadric surface fibrations over curves.
\newblock {\em Compact moduli spaces and vector bundles}, 564:227--249, 2012.

\bibitem{hassetttschinkelrationality}
Brendan Hassett and Yuri Tschinkel.
\newblock Rationality of complete intersections of two quadrics over nonclosed
  fields.
\newblock {\em Enseign. Math.}, 67(1-2):1--44, 2021.
\newblock With an appendix by Jean-Louis Colliot-Th\'{e}l\`ene.

\bibitem{hassetttransobs}
Brendan Hassett, Anthony V\'{a}rilly-Alvarado, and Patrick Varilly.
\newblock Transcendental obstructions to weak approximation on general {K}3
  surfaces.
\newblock {\em Adv. Math.}, 228(3):1377--1404, 2011.

\bibitem{huybrechtscubic}
Daniel Huybrechts.
\newblock The geometry of cubic hypersurfaces.
\newblock 2022.

\bibitem{hwang}
Jun-Muk Hwang.
\newblock Base manifolds for fibrations of projective irreducible symplectic
  manifolds.
\newblock {\em Invent. Math.}, 174(3):625--644, 2008.

\bibitem{macri}
Emanuele Macr\`i and Paolo Stellari.
\newblock Fano varieties of cubic fourfolds containing a plane.
\newblock {\em Math. Ann.}, 354(3):1147--1176, 2012.

\bibitem{markmantorelli}
Eyal Markman.
\newblock A survey of {T}orelli and monodromy results for
  holomorphic-symplectic varieties.
\newblock In {\em Complex and differential geometry}, volume~8 of {\em Springer
  Proc. Math.}, pages 257--322. Springer, Heidelberg, 2011.

\bibitem{matsushitafiber}
Daisuke Matsushita.
\newblock On fibre space structures of a projective irreducible symplectic
  manifold.
\newblock {\em Topology}, 38(1):79--83, 1999.

\bibitem{matsushitadivisor}
Daisuke Matsushita.
\newblock On isotropic divisors on irreducible symplectic manifolds.
\newblock {\em Higher dimensional algebraic geometry---in honour of {P}rofessor
  {Y}ujiro {K}awamata's sixtieth birthday}, 74:291--312, 2017.

\bibitem{mongardi}
Giovanni Mongardi and Antonio Rapagnetta.
\newblock Monodromy and birational geometry of {O}'grady's sixfolds.
\newblock {\em Journal de math\'ematiques pures et appliqu\'ees},
  146(1):31--68, 2021.

\bibitem{voisin}
Claire Voisin.
\newblock Th\'{e}or\`eme de {T}orelli pour les cubiques de {${\bf P}^5$}.
\newblock {\em Invent. Math.}, 86(3):577--601, 1986.

\bibitem{wang}
Xiaoheng Wang.
\newblock Maximal linear spaces contained in the based loci of pencils of
  quadrics.
\newblock {\em Algebr. Geom.}, 5(3):359--397, 2018.

\end{thebibliography}

\end{document}